\newcommand{\hyper}[5]{\,{}_{#1}F_{#2}\left(\!\!%
\begin{array}{cc}{\displaystyle{#3}}\\[-0.1ex]
{\displaystyle{#4}} \end{array}\Big| \,{\displaystyle{#5}}
\right)}
\newtheorem{theorem}{Theorem}
\newtheorem{corollary}[theorem]{Corollary}
\newtheorem{proposition}[theorem]{Proposition}
\newtheorem{conjecture}[theorem]{Conjecture}
\newtheorem{remark}[theorem]{Remark}
\journal{}
\begin{document}

\begin{frontmatter}
\title{Linear partial divided-difference equation satisfied by multivariate orthogonal polynomials on quadratic lattices}

\author[label1]{D. D.~Tcheutia}
\ead{duvtcheutia@yahoo.fr}

\author[label5]{Y.~Guemo Tefo}
\ead{guemoyves2000@yahoo.fr}

\author[label1,label4]{M.~Foupouagnigni}
\ead{foupouagnigni@gmail.com}

\author[label3]{E.~Godoy}
\ead{egodoy@dma.uvigo.es}

\author[label2]{I.~Area\corref{cor1}}
\ead{area@uvigo.es}
\cortext[cor1]{Corresponding author}

\address[label1]{African Institute for Mathematical Sciences, AIMS-Cameroon, P.O. Box 608, Limb\'e Crystal Gardens, South West Region, Cameroon}
\address[label5]{Department of Mathematics, Faculty of Sciences, University of Yaounde I, Yaound\'e, Cameroon}
\address[label4]{Department of Mathematics, Higher Teachers' Training College, University of Yaounde I, Yaound\'e, Cameroon}
\address[label3]{Departamento de Matem\'atica Aplicada II, E.E. Industrial, Universidade de Vigo, Campus Lagoas-Marcosende, 36310 Vigo, Spain}
\address[label2]{Departamento de Matem\'atica Aplicada II, E.E. Telecomunicaci\'on, Universidade de Vigo, Campus Lagoas-Marcosende, 36310 Vigo, Spain}

\begin{abstract}
In this paper, a fourth-order partial divided-difference equation on quadratic lattices with polynomial coefficients satisfied by bivariate Racah polynomials is presented. From this equation we obtain explicitly the matrix coefficients appearing in the three-term recurrence relations satisfied by any bivariate orthogonal polynomial solution of the equation. In particular, we provide explicit expressions for the matrices in the three-term recurrence relations satisfied by the bivariate Racah polynomials introduced by Tratnik. Moreover, we present the family of monic bivariate Racah polynomials defined from the three-term recurrence relations they satisfy, and we solve the connection problem between two different families of bivariate Racah polynomials. These results are then applied to other families of bivariate orthogonal polynomials, namely the bivariate Wilson, continuous dual Hahn and continuous Hahn, the latter two through limiting processes. The fourth-order partial divided-difference equations on quadratic lattices are shown to be of hypergeometric type in the sense that the divided-difference derivatives of solutions are themselves solution of the same type of divided-difference equations.
\end{abstract}

\begin{keyword}
Bivariate Racah polynomials \sep Bivariate Wilson polynomials \sep Bivariate dual Hahn polynomials \sep Bivariate continuous Hahn polynomials \sep Partial divided-difference equation \sep Partial difference equation \sep Nonuniform lattice \sep Quadratic lattice

\MSC[2010] 33C45 \sep 33C50 \sep 33E30 \sep 39A13 \sep 39A14 \sep 47B39
\end{keyword}

\end{frontmatter}

\section{Introduction}
Univariate Racah polynomials can be defined in terms of hypergeometric series as \cite[page 190]{MR2656096}
\begin{multline}\label{eq:urp}
r_{n}(\alpha,\beta,\gamma,\delta;s)=r_{n}(s)=(\alpha+1)_{n}\,(\beta+\delta+1)_{n}\,(\gamma+1)_{n} \\
\times
\hyper{4}{3}{-n,n+\alpha+\beta+1,-s,s+\gamma+\delta+1}{\alpha+1,\beta+\delta+1,\gamma+1}{1},\quad
n=0,1,\ldots,N,
\end{multline}
where $r_{n}(\alpha,\beta,\gamma,\delta;s)$ is a polynomial of degree $2n$ in $s$ and of degree $n$ in the quadratic lattice \cite{MR1342384,MR1149380}
\begin{equation}\label{eq:defeta}
\eta(s)=s(s+\gamma+\delta+1),
\end{equation}
and $(A)_{n}=A(A+1)\cdots(A+n-1)$ with $(A)_{0}=1$ denotes the Pochhammer symbol. Univariate Racah polynomials satisfy the following second-order linear divided-difference equation
\cite{MR2383000}
\begin{equation}\label{eqdiv-diff}
\phi(\eta(s)) {\mathbb{D}}_{\eta}^{2} r_{n}(s)+ \tau(\eta(s)) {\mathbb{S}}_{\eta} {\mathbb{D}}_{\eta} r_{n}(s)+ \lambda_{n} r_{n}(s)=0,
\end{equation}
where $\phi$ is a polynomial of degree two in the lattice $\eta(s)$ given by
\begin{multline*}
\phi(\eta(s))=-(\eta(s))^{2} + \frac{1}{2} (-\alpha (2 \beta +\delta +\gamma +3)+\beta (\delta -\gamma -3)-2 (\delta \gamma +\delta +\gamma +2)) \eta(s) \\
-\frac{1}{2} (\alpha +1) (\gamma +1) (\beta +\delta +1) (\delta +\gamma +1),
\end{multline*}
$\tau$ is a polynomial of degree one in the lattice $\eta(s)$ given by
\begin{equation*}
\tau(\eta(s))=-(\alpha +\beta +2) \eta(s)-(\alpha +1) (\gamma +1) (\beta+\delta +1),
\end{equation*}
the eigenvalues $\lambda_{n}$ are given by
\begin{equation*}
\lambda_{n}=n (\alpha +\beta +n+1),
\end{equation*}
and the difference operators ${\mathbb{D}}_{\eta}$ and ${\mathbb{S}}_{\eta}$ \cite{MR973434,MR1379135,Wittearxiv} are defined by
\begin{equation}\label{eq:doperator}
{\mathbb{D}}_{\eta} f(s)=\frac{f(s+1/2)-f(s-1/2)}{\eta(s+1/2)-\eta(s-1/2)}, \quad
{\mathbb{S}}_{\eta} f(s)=\frac{f(s+1/2)+f(s-1/2)}{2}.
\end{equation}
Notice that the above operators transform polynomials of degree $n$ in the lattice $\eta(s)$ defined in \eqref{eq:defeta} into polynomials of respectively degree $n-1$ and $n$ in the same variable $\eta(s)$.

Equation \eqref{eqdiv-diff} can be also written in many other forms, e.g. \cite[Eq. (9.2.5)]{MR2656096}
\[
n(n+\alpha+\beta+1) r_{n}(s) = B(s) r_{n}(s+1) - (B(s)+D(s))r_{n}(s) + D(s) r_{n}(s-1),
\]
where $B(s)$ and $D(s)$ are the rational functions given by
\begin{align*}
B(s)&=\frac{(\alpha +s+1) (\gamma +s+1) (\beta +\delta +s+1) (\delta +\gamma +s+1)}{(\delta +\gamma +2 s+1) (\delta +\gamma +2 s+2)} , \\
D(s)&= \frac{s (\delta +s) (-\beta +\gamma +s) (-\alpha +\delta
+\gamma +s)}{(\delta +\gamma +2 s) (\delta +\gamma +2 s+1)}.
\end{align*}

We would like to notice here that
\begin{equation}\label{eq:dur1}
{\mathbb{D}}_{\eta} r_{n}(\alpha,\beta,\gamma,\delta;s) = n(n+\alpha+\beta+1) r_{n-1}(\alpha+1,\beta+1,\gamma+1,\delta;s-1/2)
\end{equation}
There are another families of univariate orthogonal polynomials on quadratic lattices and we refer to \cite{MR2656096,MR1149380} as basic references on this topic.

\medskip
Multivariable Racah polynomials have been introduced by Tratnik in \cite{MR1122519} and deeply analyzed by Geronimo and Iliev in \cite{MR2784425}, where they construct a commutative algebra ${\mathcal{A}}_{x}$ of difference operators in ${\mathbf{R}}^{p}$, depending on $p+3$ parameters, which is diagonalized by the multivariable Racah polynomials considered by Tratnik. In the particular case $p=2$, the bivariate Racah polynomials are defined in terms of univariate Racah polynomials \eqref{eq:urp} as
\begin{multline}\label{eq:brp}
R_{n,m}(s,t;\beta_{0},\beta_{1},\beta_{2},\beta_3,N)
=r_{n}(\beta_{1}-\beta_{0}-1,\beta_{2}-\beta_{1}-1,-t-1,\beta_{1}+t;s) \\
\times
r_{m}(2n+\beta_{2}-\beta_{0}-1,\beta_{3}-\beta_{2}-1,n-N-1,n+\beta_{2}+N;t-n),
\end{multline}
which are polynomials in the lattices $x(s)=s(s+\beta_{1})$ and $y(t)=t(t+\beta_{2})$. These polynomials coincide with the bivariate Racah polynomials of parameters $a_{1}$, $a_{2}$, $a_{3}$, $\gamma$, and $\eta$ introduced by Tratnik \cite[Eq. (2.1)]{MR1122519} after the substitutions
\begin{equation}\label{substrat:Geronimo}
\beta_{0}=a_{1}-\eta-1,\quad \beta_{1}=a_{1}, \quad \beta_{2}=a_{1}+a_{2}, \quad \beta_{3}=a_{1}+a_{2}+a_{3}, \,\text{ and }\,\, N=-\gamma-1.
 \end{equation}

\medskip
As indicated in \cite{MR2784425} it is a bit surprising that the difference operator having bivariate Racah polynomials as
eigenfunctions has a much more complicated structure than the operator for e.g. bivariate big $q$-Jacobi polynomials
\cite{AAGR2013,MR2559345,Lewanowicz20138790}. The equation for bivariate Racah polynomials given in \cite{MR2784425} has $9$ rational coefficients, compared to $6$ polynomial coefficients for the operator corresponding to bivariate big $q$-Jacobi polynomials. A Lie algebraic description of Tratnik's extension of the Racah polynomials has been presented in \cite{Post}. In \cite[Appendix]{MR2784425} a pair of difference equations for the bivariate Racah polynomials are explicitly given, involving rational coefficients.

\medskip
In Section \ref{sec:2} we rewrite a difference equation involving rational coefficients given in \cite[Appendix]{MR2784425} into a fourth-order linear partial divided-difference equation on quadratric lattices for the bivariate Racah polynomials involving polynomial coefficients in quadratic lattices. This equation being of hypergeometric type, allows us to obtain some properties of the difference derivatives of bivariate Racah polynomials. From this new form of the linear partial divided-difference equation the matrix coefficients appearing in the three-term recurrence relations satisfied by any bivariate orthogonal polynomial solution of this partial divided-difference equation are explicitly given in Section \ref{sec:3}. As a particular case, we provide explicit expressions for the matrix coefficients of the three-term recurrence relations satisfied by the bivariate Racah polynomials introduced by Tratnik \cite{MR1122519} and deeply analyzed by Geronimo and Iliev \cite{MR2784425}. Moreover, we present the family of monic bivariate Racah polynomials defined from the three-term recurrence relations they satisfy. By using this family of monic bivariate Racah polynomials and the analysis for the three-term recurrence relations we explicitly solve the connection problem between the two families of bivariate Racah polynomials introduced by Tratnik \cite{MR1122519}. In Section \ref{sec:4}, by considering appropriate limit relations, we obtain a fourth-order linear partial divided-difference equation for the bivariate Wilson polynomials involving also polynomial coefficients as well as some properties of the difference derivatives of the bivariate Wilson polynomials. Limit relations once more allows us to deduce a fourth-order linear partial divided-difference equations satisfied by the bivariate continuous dual Hahn and continuous Hahn polynomials. Also, properties on the partial difference derivatives of the latter families are also obtained, as well as the matrix coefficients of the three-term recurrence relations they satisfy. Furthermore, we derived a sixth-order linear partial divided-difference equation for the trivariate continuous Hahn polynomials as illustration of the conjecture we state to generalize this result to any multivariate Racah, Wilson and continuous Hahn polynomials.

\section{Fourth-order linear partial divided-difference equation for the bivariate Racah polynomials with polynomial coefficients}\label{sec:2}

In \cite{MR2784425} (see Eq. (3.25) and Appendix for ${\mathcal{L}}_{2}^{x}$ and $\mu_{2}(n)$) the following equation involving rational coefficients for the bivariate Racah polynomials defined in \eqref{eq:brp} up to a normalizing constant is explicitly given
\begin{multline}
\frac{(N-t) \left(\beta_1+s\right) \left(-\beta_0+\beta_1+s\right) \left(\beta_3+N+t\right) \left(\beta_2+s+t\right) \left(\beta_2+s+t+1\right)}{\left(\beta_1+2 s\right) \left(\beta_1+2 s+1\right) \left(\beta_2+2 t\right) \left(\beta_2+2 t+1\right)} (R_{n,m}(s+1,t+1)-{\mathcal{I}}) \\
+\frac{\left(\beta_1+s\right) \left(-\beta_0+\beta_1+s\right) (t-s) \left(\beta_2+s+t\right)}{\left(\beta_1+2 s\right) \left(\beta_1+2
s+1\right) \left(\beta_2+2 t-1\right) \left(\beta_2+2 t+1\right)} \\
\times \left(\left(\beta_2+1\right) \left(\beta_3-1\right)+2 N \left(\beta_3+N\right)+2 t \left(\beta_2+t\right)\right) (R_{n,m}(s+1,t)-{\mathcal{I}}) \\
+ \frac{(N-t) \left(\beta_3+N+t\right) \left(\beta_2+s+t\right) \left(-\beta_1+\beta_2-s+t\right)}{\left(\beta_1+2 s-1\right) \left(\beta_1+2 s+1\right) \left(\beta_2+2 t\right) \left(\beta_2+2 t+1\right)} \\
\times \left(\left(\beta_0+1\right) \left(\beta_1-1\right)+2 s \left(\beta_1+s\right)\right) (R_{n,m}(s,t+1)-{\mathcal{I}}) \\
-\frac{s (N-t) \left(\beta_0+s\right) \left(\beta_3+N+t\right) \left(\beta_1-\beta_2+s-t-1\right)}{\left(\beta_1+2 s-1\right) \left(\beta_1+2 s\right) \left(\beta_2+2 t\right) \left(\beta_2+2 t+1\right)} \left(\beta_1-\beta_2+s-t\right) ({\mathcal{I}}-R_{n,m}(s-1,t+1))\\
+ \frac{\left(\beta_1+s\right) \left(\beta_1-\beta_0+s\right) (s-t) (s-t+1) \left(\beta_2+N+t\right)}{\left(\beta_1+2 s\right) \left(\beta_1+2 s+1\right) \left(\beta_2+2 t-1\right) \left(\beta_2+2 t\right)} \left(\beta_2-\beta_3-N+t\right) ({\mathcal{I}}-R_{n,m}(s+1,t-1))\\
+ \frac{s \left(\beta_0+s\right) \left(\beta_2+N+t\right) \left(\beta_2-\beta_3-N+t\right) \left(\beta_1+s+t-1\right) \left(\beta_1+s+t\right)
}{\left(\beta_1+2 s-1\right) \left(\beta_1+2 s\right) \left(\beta_2+2 t-1\right) \left(\beta_2+2 t\right)} ({\mathcal{I}}-R_{n,m}(s-1,t-1)) \\
+ \frac{s \left(\beta_0+s\right) \left(\left(\beta_2+1\right) \left(\beta_3-1\right)+2 N^2+2 \beta_3 N+2 t^2+2 \beta_2 t\right) }{\left(\beta_1+2 s-1\right) \left(\beta_1+2 s\right) \left(\beta_2+2 t-1\right) \left(\beta_2+2 t+1\right)} \\Ê\times \left(\beta_1+s+t\right) \left(\beta_1-\beta_2+s-t\right) ({\mathcal{I}}-R_{n,m}(s-1,t))\\
-\frac{\left(\left(\beta_0+1\right) \left(\beta_1-1\right)+2 s^2+2 \beta_1 s\right) (s-t) \left(\beta_2+N+t\right) }{\left(\beta_1+2 s-1\right) \left(\beta_1+2 s+1\right) \left(\beta_2+2 t-1\right) \left(\beta_2+2 t\right)} \\ \times \left(\beta_2-\beta_3-N+t\right) \left(\beta_1+s+t\right) ({\mathcal{I}}-R_{n,m}(s,t-1)) \\
+(m+n) (\beta_{3}-\beta_{0}+m+n-1) R_{n,m}(s,t)=0,\label{racah_Geronimo_Iliev}
\end{multline}
where $R_{n,m}(s,t) := R_{n,m}(s,t;\beta_{0},\beta_{1},\beta_{2},\beta_{3},N)$ defined in \eqref{eq:brp} and ${\mathcal{I}}$ denotes the identity operator.

\medskip
 Our objective is to re-write the difference equation for bivariate Racah polynomials by using uniquely polynomial coefficients in the lattices $x(s)$ and $y(t)$. In doing so, we shall consider the lattices $x(s)$ and $y(t)$ are as
\begin{equation}\label{eq:lattices}
x(s)=s(s+\beta_{1}), \quad y(t)=t(t+\beta_{2}).
\end{equation}

\begin{theorem}\label{eq:theorem1}
The bivariate Racah polynomials defined in \eqref{eq:brp} are solution of the following fourth-order linear partial divided-difference equation
\begin{multline}\label{eqdiv-diff-biv}
f_1(x(s),y(t)) \mathbb{D}^2_x\mathbb{D}^2_yR_{n,m}(s,t) + f_2(x(s),y(t))\mathbb{S}_x\mathbb{D}_x\mathbb{D}^2_yR_{n,m}(s,t)
+f_3(x(s),y(t))\mathbb{S}_y\mathbb{D}_y\mathbb{D}^2_x R_{n,m}(s,t)\\
+f_4(x(s),y(t)) \mathbb{S}_x\mathbb{D}_x\mathbb{S}_y\mathbb{D}_y R_{n,m}(s,t)
+ f_5(x(s))\mathbb{D}^2_x R_{n,m}(s,t)+f_6(y(t))\mathbb{D}^2_y R_{n,m}(s,t)\\
+f_7(x(s)) \mathbb{S}_x\mathbb{D}_x R_{n,m}(s,t)
+f_8(y(t))\mathbb{S}_y\mathbb{D}_y R_{n,m}(s,t)+(m+n)(\beta_{3}-\beta_{0}+m+n-1) R_{n,m}(s,t)=0,
\end{multline}
where $R_{n,m}(s,t):=R_{n,m}(s,t;\beta_{0},\beta_{1},\beta_{2},\beta_3,N)$, and the coefficients $f_i$, $i=1,\ldots,8$ are polynomials in the lattices $x(s)$ and $y(t)$ defined in \eqref{eq:lattices} given by
\begin{align*}
f_8(y(t))&=(\beta_0-\beta_3)y(t)-N(\beta_0-\beta_2)(\beta_3+N),\\
f_7(x(s))&=(\beta_0-\beta_3)x(s)-N(\beta_0-\beta_1)(\beta_3+N), \\
f_6(y(t))&=-(y(t))^2+\frac{1}{2}(2N^2+2\beta_3(\beta_0+N)-\beta_2(\beta_3+\beta_0))y(t)-\frac{1}{2}N\beta_2(\beta_0-\beta_2)(\beta_3+N),\\
f_5(x(s))&=-(x(s))^2+\frac{1}{2}(2\beta_3(N+\beta_0)+2N^2-\beta_1(\beta_3+\beta_0))x(s)-\frac{1}{2}N\beta_1(\beta_0-\beta_1)(\beta_3+N), \\
f_4(x(s),y(t))&=-2x(s)y(t)+(2N^2+\beta_2(1-\beta_0)+\beta_3(\beta_0-1+2N)) x(s)\\
& +(\beta_0-\beta_1)(\beta_3+1) y(t)-N(\beta_0-\beta_1)(\beta_2+1)(\beta_3+N), \\
f_3(x(s),y(t))&=(\beta_2-\beta_3)(x(s))^2+x(s)\Big( -(1+\beta_1+\beta_3-2\beta_0)y(t)+ \left( 1+\beta_{{1}}-2\,\beta_{{0}}+\beta_{{2}} \right) {N}^{2}\\
& -\beta_{{3}} \left( -\beta_{{1}}-\beta_{{2}}-1+2\,\beta_{{0}} \right) N+\frac{1}{2}\, \left(\beta_{{2}}-\beta_{{3}} \right) \left( \beta_{{1}}\beta_{{0}}-2\,\beta_{{0}}+\beta_{{1}} \right) \Big)\\
&+\frac{1}{2}\,\beta_{{1}} \left( \beta_{{3}}+1 \right) \left( \beta_{{0}}-\beta_{{1}} \right)y(t)-\frac{1}{2}\,\beta_{{1}}N \left( \beta_{{2}}+1 \right) \left( \beta_{{3}}+N \right) \left( \beta_{{0}}-\beta_{{1}} \right),  \\
f_2(x(s),y(t))&=(\beta_0-\beta_1)(y(t))^2+x(s)\Big((\beta_0+\beta_2-2\beta_3-1)y(t)+\left( 1-\beta_{{0}}+\beta_{{2}} \right) {N}^{2} \\Ê&-\beta_{{3}} \left( -1+\beta_{{0}}-\beta_{{2}} \right) N+\frac{1}{2}\,\beta_{{2}} \left( \beta_{{2}}-\beta_{{3}} \right) \left( \beta_{{0}}-1 \right)\Big) \\
&-\frac{1}{2}\, \left( \beta_{{0}}-\beta_{{1}} \right) \left( 2\,\beta_{{3}}N-\beta_{{3}}\beta_{{2}}+2\,{N}^{2}-2\,\beta_{{3}}+\beta_{{2}} \right)y(t)\\
&-\frac{1}{2}\, \left( \beta_{{0}}-\beta_{{1}} \right) N\beta_{{2}} \left(\beta_{{2}}+1 \right) \left( \beta_{{3}}+N \right),  \\
f_1(x(s),y(t))&=-(x(s))^2y(t)-x(s)(y(t))^2+({N}^{2}+\beta_{{3}}N-\frac{1}{2}\,\beta_{{2}} \left( \beta_{{2}}-\beta_{{3}}\right)) (x(s))^2 \\
&+\frac{1}{2}\,\beta_{{1}} \left( \beta_{{0}}-\beta_{{1}} \right) (y(t))^2 + \Big(\left( \frac{1}{2}\,\beta_{{1}}+\frac{1}{2}-\frac{1}{2}\,\beta_{{3}}-\beta_{{0}} \right) \beta_{{2}}-\beta_{{3}}-\frac{1}{2}\,\beta_{{1}}+2\,\beta_{{0}}\beta_{{3}} \\
&+\beta_{{0}}+{N}^{2}-\beta_{{1}}\beta_{{3}}+\beta_{{3}}N-\frac{1}{2}\,\beta_{{1}}\beta_{{0}}\Big)x(s)y(t) \\
&+\Big(\left( \frac{1}{2}\,\beta_{{1}}\beta_{{0}}+\frac{1}{2}\,{\beta_{{2}}}^{2}+\frac{1}{2}\,\beta_{{1}}\beta_{{2}}+\frac{1}{2}\,\beta_{{2}}-\beta_{{0}}\beta_{{2}}+\frac{1}{2}\,\beta_{{1}}-\beta_{{0}} \right) {N}^{2}
\end{align*}
\begin{align*}
&+\frac{1}{2}\,\beta_{{3}} \left( \beta_{{1}}\beta_{{0}}+{\beta_{{2}}}^{2}+\beta_{{1}}\beta_{{2}}+\beta_{{2}}-2\,\beta_{{0}}\beta_{{2}}+\beta_{{1}}-2\,\beta_{{0}} \right) N  \\
&-\frac{1}{4}\,\beta_{{2}} \left( \beta_{{2}}-\beta_{{3}} \right) \left( \beta_{{1}}\beta_{{0}}+\beta_{{1}}-2\,\beta_{{0}} \right))x(s)\\
&-\frac{1}{4}\,\beta_{{1}} \left( \beta_{{2}}-2\,\beta_{{3}}+2\,\beta_{{3}}N+2\,{N}^{2}-\beta_{{2}}\beta_{{3}} \right) \left( \beta_{{0}}-\beta_{{1}} \right) y(t) \\
&-\frac{1}{4}\,N\beta_{{1}}\beta_{{2}} \left( \beta_{{2}}+1 \right) \left( \beta_{{0}}-\beta_{{1}} \right) \left( \beta_{{3}}+N \right).
\end{align*}
\end{theorem}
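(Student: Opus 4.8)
The plan is to exploit the fact that both \eqref{racah_Geronimo_Iliev} and the claimed equation \eqref{eqdiv-diff-biv} are linear relations among the same nine shifted values $R_{n,m}(s+i,t+j)$, $i,j\in\{-1,0,1\}$, and that they carry the identical zeroth-order coefficient $(m+n)(\beta_3-\beta_0+m+n-1)$. It therefore suffices to expand every divided-difference operator in \eqref{eqdiv-diff-biv} in terms of these nine shifts and to check, coefficient by coefficient, that the resulting relation is exactly \eqref{racah_Geronimo_Iliev}; since the latter is known to hold for the polynomials \eqref{eq:brp}, this equivalence proves the theorem.

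First I would record the elementary action of the operators on the quadratic lattices \eqref{eq:lattices}. From $x(s+\tfrac12)-x(s-\tfrac12)=2s+\beta_1$ and \eqref{eq:doperator} one gets the three-point stencils
\[
\mathbb{D}_x^2 f(s)=\frac{1}{2s+\beta_1}\Big(\frac{f(s+1)-f(s)}{2s+\beta_1+1}-\frac{f(s)-f(s-1)}{2s+\beta_1-1}\Big),
\]
\[
\mathbb{S}_x\mathbb{D}_x f(s)=\frac12\Big(\frac{f(s+1)-f(s)}{2s+\beta_1+1}+\frac{f(s)-f(s-1)}{2s+\beta_1-1}\Big),
\]
and the analogues for $\mathbb{D}_y^2$, $\mathbb{S}_y\mathbb{D}_y$ with $2t+\beta_2$. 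Because the $s$- and $t$-operators act on independent variables they commute, so the four mixed products $\mathbb{D}_x^2\mathbb{D}_y^2$, $\mathbb{S}_x\mathbb{D}_x\mathbb{D}_y^2$, $\mathbb{S}_y\mathbb{D}_y\mathbb{D}_x^2$, $\mathbb{S}_x\mathbb{D}_x\mathbb{S}_y\mathbb{D}_y$ are tensor products of these stencils and give genuine nine-point expansions, whereas $\mathbb{D}_x^2,\mathbb{S}_x\mathbb{D}_x$ (resp. $\mathbb{D}_y^2,\mathbb{S}_y\mathbb{D}_y$) touch only the three points $(s\pm1,t),(s,t)$ (resp. $(s,t\pm1),(s,t)$).

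I would then substitute these expansions into \eqref{eqdiv-diff-biv}, weight them by the candidate coefficients $f_i$, and collect the coefficient of each of the nine shifts. The stencil structure organizes the matching in stages. The four corner coefficients $(s\pm1,t\pm1)$ involve only $f_1,f_2,f_3,f_4$; their common denominators $(2s+\beta_1)(2s+\beta_1\pm1)(2t+\beta_2)(2t+\beta_2\pm1)$ are exactly those in \eqref{racah_Geronimo_Iliev}, and after clearing them the $(s+1,t+1)$ corner reads
\[
f_1+\tfrac12 f_2(2s+\beta_1)+\tfrac12 f_3(2t+\beta_2)+\tfrac14 f_4(2s+\beta_1)(2t+\beta_2)=P_{++},
\]
with $P_{++}$ the numerator of the first term of \eqref{racah_Geronimo_Iliev}, the three remaining corners giving companion identities with the signs of the odd factors flipped. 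Decomposing $P_{++}$ by parity under the lattice symmetries $s\mapsto -s-\beta_1$ and $t\mapsto -t-\beta_2$ (which fix $x(s)$, $y(t)$ and negate $2s+\beta_1$, $2t+\beta_2$) reads off $f_1,f_2,f_3,f_4$: the relevant parity component, divided by the corresponding odd factor, is forced to be a polynomial in $x(s)$ and $y(t)$ because an odd polynomial vanishes at the fixed point $s=-\beta_1/2$ and is hence divisible by $2s+\beta_1$. This divisibility is precisely what makes the coefficients polynomial rather than merely rational. The four edge coefficients then determine $f_5,f_6,f_7,f_8$ once $f_1,\dots,f_4$ are known, and the central coefficient $(s,t)$ — tied to the others by the sum rule that $\mathbb{D}_x$ and $\mathbb{D}_y$ annihilate constants — serves as a final consistency check against the $\mathcal{I}$-pieces of \eqref{racah_Geronimo_Iliev}.

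The main obstacle is the sheer bulk of the symbolic computation: one must expand and weight eight three- or nine-point stencils with rational coefficients by the lengthy polynomials $f_i$, and then verify that the nine collected coefficients reproduce those of \eqref{racah_Geronimo_Iliev} after clearing denominators, including the nontrivial cross-consistency identities among the four corner numerators $P_{\pm\pm}$. The conceptual content — the existence of polynomial coefficients — is settled by the parity-divisibility argument, but the verification itself is a large algebraic identity best confirmed with a computer algebra system.
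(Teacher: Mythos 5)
Your proposal is correct and is essentially the paper's own proof run in the opposite direction: the paper writes the nine operator expressions as linear combinations of the nine shifts $R_{n,m}(s+\ell_1,t+\ell_2)$, inverts that $9\times 9$ linear system, and substitutes into \eqref{racah_Geronimo_Iliev}, whereas you expand \eqref{eqdiv-diff-biv} over the same nine shifts and match coefficients term by term --- the same invertible linear relationship either way, settled by the same large computer-algebra verification. Your parity/divisibility argument under $s\mapsto-s-\beta_1$, $t\mapsto-t-\beta_2$ is a welcome structural explanation of \emph{why} the coefficients come out polynomial in $x(s)$ and $y(t)$, a fact the paper merely records as an observation at the end of the computation.
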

\begin{proof}
The result follows from Equation \eqref{racah_Geronimo_Iliev} given in
\cite[Appendix]{MR2784425}, first by writing the nine expressions
\begin{gather*}
 \mathbb{D}^2_x\mathbb{D}^2_yR_{n,m}(s,t) , \quad \mathbb{S}_x\mathbb{D}_x\mathbb{D}^2_yR_{n,m}(s,t), \quad \mathbb{S}_y\mathbb{D}_y\mathbb{D}^2_x R_{n,m}(s,t), \quad \mathbb{S}_x\mathbb{D}_x\mathbb{S}_y\mathbb{D}_y R_{n,m}(s,t), \\
 \mathbb{D}^2_x R_{n,m}(s,t), \quad \mathbb{D}^2_y R_{n,m}(s,t), \quad \quad \mathbb{S}_x\mathbb{D}_x R_{n,m}(s,t),
\quad \mathbb{S}_y\mathbb{D}_y R_{n,m}(s,t), \quad R_{n,m}(s,t),
\end{gather*}
as linear combinations of the nine terms $R_{n,m}(s+\ell_{1},t+\ell_{2})$, $-1 \leq \ell_{1},\ell_{2} \leq 1$, then solving the resulting system of linear equations in terms of the unknowns $R_{n,m}(s+\ell_{1},t+\ell_{2})$, $-1 \leq \ell_{1},\ell_{2} \leq 1$, and finally replacing the solution into equation \eqref{racah_Geronimo_Iliev} and observe that the coefficients obtained are all polynomials in the lattices $x(s)$ and $y(t)$.

\end{proof}

\medskip
In order to give properties of the difference derivatives of the bivariate Racah polynomials we shall need to recall a number of properties of the difference operators $\mathbb{D}_x$ and $\mathbb{S}_x$ (see e.g. \cite{MR2383000} and \cite{FKKM}):
\begin{equation} \label{eq:dxsx}
\begin{cases}
\mathbb{D}_x\mathbb{S}_x f =  \mathbb{S}_x\mathbb{D}_x f+\frac{1}{2}\mathbb{D}^2_x f,\qquad \qquad \,\,
\mathbb{S}^2_x f= \frac{1}{2}\mathbb{S}_x\mathbb{D}_x f+ U_2(s)\mathbb{D}^2_x f+f, \\
\mathbb{D}_x(fg) = \mathbb{S}_x f \mathbb{D}_xg +\mathbb{D}_xf\mathbb{S}_x g, \qquad \,\,\,\,
\mathbb{D}_y\mathbb{S}_y f = \mathbb{S}_y\mathbb{D}_y f+\frac{1}{2}\mathbb{D}^2_y f, \\
\mathbb{S}^2_y f= \frac{1}{2}\mathbb{S}_y\mathbb{D}_y f+ V_2(t)\mathbb{D}^2_y f+f, \qquad
\mathbb{D}_y(fg) = \mathbb{S}_y f \mathbb{D}_y g +\mathbb{D}_y f\mathbb{S}_y g,
\end{cases}
\end{equation}
with $f:=f(s,t)$, $g:=g(s,t)$, where
\begin{equation}\label{eq:uuvv}
U_{2}(s)=x(s)+\frac{\beta_1^2}{4},\quad
V_{2}(t)=y(t)+\frac{\beta_2^2}{4}.
\end{equation}
Using the above properties, we show that the polynomials $\mathbb{D}_x R_{n,m}(s,t)$ and $\mathbb{D}_y R_{n,m}(s,t)$ are also solution of a linear partial divided-difference equation of the same type as \eqref{eqdiv-diff-biv}.
\begin{theorem}\label{theorem:2}
The polynomial $R_{n,m}^{(1,0)}(s,t):=\mathbb{D}_x R_{n,m}(s,t)$ is solution of the following fourth-order linear partial divided-difference equation
\begin{multline*}
f_{11}(x(s),y(t)) \mathbb{D}^2_x\mathbb{D}^2_y R_{n,m}^{(1,0)}(s,t)
+ f_{21}(x(s),y(t))\mathbb{S}_x\mathbb{D}_x\mathbb{D}^2_y R_{n,m}^{(1,0)}(s,t)
+f_{31}(x(s),y(t))\mathbb{S}_y\mathbb{D}_y\mathbb{D}^2_x R_{n,m}^{(1,0)}(s,t)\\
+f_{41}(x(s),y(t))\mathbb{S}_x\mathbb{D}_x\mathbb{S}_y\mathbb{D}_y R_{n,m}^{(1,0)}(s,t)
+ f_{51}(x(s))\mathbb{D}^2_x R_{n,m}^{(1,0)}(s,t)
+f_{61}(y(t))\mathbb{D}^2_y R_{n,m}^{(1,0)}(s,t)\\
+f_{71}(x(s)) \mathbb{S}_x\mathbb{D}_x R_{n,m}^{(1,0)}(s,t)
+f_{81}(y(t))\mathbb{S}_y\mathbb{D}_y R_{n,m}^{(1,0)}(s,t)+(m+n-1)
(\beta_{3}-\beta_{0}+m+n) R_{n,m}^{(1,0)}(s,t)=0,
\end{multline*}
where the coefficients $f_{i1},\ i=1,\ldots,8$ are polynomials in the lattices $x(s)$ and $y(t)$ given by
\begin{align*}
f_{81}(y(t))&=f_8(y(t))+\mathbb{D}_x(f_4(x(s),y(t))), \\
\end{align*}
\begin{align*}
f_{71}(x(s))&=\mathbb{S}_x(f_7(x(s)))+\frac{1}{2}\mathbb{D}_x(f_7(x(s)))+\mathbb{D}_x(f_5(x(s))),\\
f_{61}(y(t))&=f_6(y(t))+\mathbb{D}_x(f_2(x(s),y(t))),\\
f_{51}(x(s))&=\mathbb{S}_x(f_5(x(s)))+ \mathbb{D}_x(f_7(x(s)))U_{2}(s)+\frac{1}{2}\mathbb{S}_x(f_7(x(s))), \\
f_{41}(x(s),y(t))&=\frac{1}{2}\mathbb{D}_x(f_4(x(s),y(t)))+\mathbb{D}_x(f_3(x(s),y(t)))+ \mathbb{S}_x(f_4(x(s),y(t)))\\
f_{31}(x(s),y(t))&=\frac{1}{2}\mathbb{S}_x(f_4(x(s),y(t)))+\mathbb{S}_x(f_3(x(s),y(t)))+ \mathbb{D}_x(f_4(x(s),y(t)))U_{2}(s), \\
f_{21}(x(s),y(t))&=\frac{1}{2}\mathbb{D}_x(f_2(x(s),y(t)))+\mathbb{D}_x(f_1(x(s),y(t)))+ \mathbb{S}_x(f_2(x(s),y(t))),\\
f_{11}(x(s),y(t))&=\frac{1}{2}\mathbb{S}_x(f_2(x(s),y(t)))+\mathbb{S}_x(f_1(x(s),y(t)))+\mathbb{D}_x(f_2(x(s),y(t)))U_{2}(s),
\end{align*}
where $U_{2}(s)$ is given in \eqref{eq:uuvv},
and the polynomial $R_{n,m}^{(0,1)}(s,t):=\mathbb{D}_yR_{n,m}(x(s),y(t))$ is solution of the following fourth-order linear partial divided-difference equation
\begin{multline*}
f_{12}(x(s),y(t)) \mathbb{D}^2_x\mathbb{D}^2_y R_{n,m}^{(0,1)}(s,t)
+ f_{22}(x(s),y(t))\mathbb{S}_x\mathbb{D}_x\mathbb{D}^2_y R_{n,m}^{(0,1)}(s,t)
+f_{32}(x(s),y(t))\mathbb{S}_y\mathbb{D}_y\mathbb{D}^2_x R_{n,m}^{(0,1)}(s,t)\\
+f_{42}(x(s),y(t)) \mathbb{S}_x\mathbb{D}_x\mathbb{S}_y\mathbb{D}_y R_{n,m}^{(0,1)}(s,t)
+ f_{52}(x(s))\mathbb{D}^2_x R_{n,m}^{(0,1)}(s,t)
+f_{62}(y(t))\mathbb{D}^2_y R_{n,m}^{(0,1)}(s,t) \\
+f_{72}(x(s)) \mathbb{S}_x\mathbb{D}_x R_{n,m}^{(0,1)}(s,t)
+f_{82}(y(t))\mathbb{S}_y\mathbb{D}_y R_{n,m}^{(0,1)}(s,t)
 +(m+n-1)(\beta_{3}-\beta_{0}+m+n) R_{n,m}^{(0,1)}(s,t)=0,
\end{multline*}
and the coefficients $f_{i2},\ i=1,\ldots,8$ are polynomials in the lattices $x(s)$ and $y(t)$ given by
\begin{align*}
f_{82}(y(t))&= \mathbb{S}_y(f_8(y(t)))+\frac{1}{2}\mathbb{D}_y(f_8(y(t)))+\mathbb{D}_y(f_6(y(t))),\\
f_{72}(x(s))&=f_7(x(s))+\mathbb{D}_y(f_4(x(s),y(t))),\\
f_{62}(y(t))&= \mathbb{D}_y(f_8(y(t))) V_{2}(t)+\frac{1}{2}\mathbb{S}_y(f_8(y(t)))+\mathbb{S}_y(f_6(y(t))),\\
f_{52}(x(s))&=f_5(x(s))+\mathbb{D}_y(f_3(x(s),y(t))), \\
f_{42}(x(s),y(t))&=\frac{1}{2}\mathbb{D}_y(f_4(x(s),y(t)))+\mathbb{D}_y(f_2(x(s),y(t)))+ \mathbb{S}_y(f_4(x(s),y(t))), \\
f_{32}(x(s),y(t))&=\frac{1}{2}\mathbb{D}_y(f_3(x(s),y(t)))+\mathbb{D}_y(f_1(x(s),y(t)))+ \mathbb{S}_y(f_3(x(s),y(t))), \\
f_{22}(x(s),y(t))&=\frac{1}{2}\mathbb{S}_y(f_4(x(s),y(t)))+\mathbb{S}_y(f_2(x(s),y(t)))+ \mathbb{D}_y(f_4(x(s),y(t)))V_{2}(t),\\
f_{12}(x(s),y(t))&=\frac{1}{2}\mathbb{S}_y(f_3(x(s),y(t)))+\mathbb{S}_y(f_1(x(s),y(t)))+\mathbb{D}_y(f_3(x(s),y(t)))V_{2}(t),
\end{align*}
where $V_{2}(t)$ is given in \eqref{eq:uuvv}.
\end{theorem}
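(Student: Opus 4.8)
The plan is to derive both equations by applying the divided-difference operator $\mathbb{D}_x$ (respectively $\mathbb{D}_y$) to the equation \eqref{eqdiv-diff-biv} established in Theorem \ref{eq:theorem1}, and then rewriting the outcome so that every operator acts directly on $R_{n,m}^{(1,0)}=\mathbb{D}_x R_{n,m}$ (respectively $R_{n,m}^{(0,1)}=\mathbb{D}_y R_{n,m}$). I treat the first equation in detail, the second being entirely analogous after exchanging the roles of the two lattices. First I would apply $\mathbb{D}_x$ term by term to \eqref{eqdiv-diff-biv}. For each summand $f_i\,\mathcal{O}_i R_{n,m}$ I would use the Leibniz-type rule $\mathbb{D}_x(fg)=\mathbb{S}_x f\,\mathbb{D}_x g+\mathbb{D}_x f\,\mathbb{S}_x g$ from \eqref{eq:dxsx}, together with the crucial facts that $\mathbb{D}_x$ and $\mathbb{S}_x$ commute with the $y$-operators $\mathbb{D}_y,\mathbb{S}_y$, and that $\mathbb{D}_x f_6=\mathbb{D}_x f_8=0$, $\mathbb{S}_x f_6=f_6$, $\mathbb{S}_x f_8=f_8$ since $f_6,f_8$ depend only on $y(t)$. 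The surviving $x$-operators are then normalized by repeatedly invoking $\mathbb{D}_x\mathbb{S}_x=\mathbb{S}_x\mathbb{D}_x+\tfrac12\mathbb{D}_x^2$ and $\mathbb{S}_x^2=\tfrac12\mathbb{S}_x\mathbb{D}_x+U_2(s)\mathbb{D}_x^2+\mathrm{Id}$, until every term carries one of the eight operators of \eqref{eqdiv-diff-biv} acting on $R_{n,m}^{(1,0)}$, or no $x$-operator at all.

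The bookkeeping splits the eight original summands into three groups according to their $x$-content: those built on $\mathbb{D}_x^2$ (namely $f_1,f_3,f_5$), those built on the single factor $\mathbb{S}_x\mathbb{D}_x$ (namely $f_2,f_4,f_7$), and those with no $x$-operator ($f_6,f_8$). A summand $f\,\mathbb{D}_x^2\,P$, with $P$ a $y$-operator, produces exactly $\mathbb{S}_x f\,\mathbb{D}_x^2 P\,R_{n,m}^{(1,0)}+\mathbb{D}_x f\,\mathbb{S}_x\mathbb{D}_x P\,R_{n,m}^{(1,0)}$, whereas a summand $f\,\mathbb{S}_x\mathbb{D}_x\,P$ produces, after the two normalization identities and after pushing the $y$-operator $P$ through, the combination $\bigl(\tfrac12\mathbb{S}_x f+U_2\mathbb{D}_x f\bigr)\mathbb{D}_x^2 P\,R_{n,m}^{(1,0)}+\bigl(\mathbb{S}_x f+\tfrac12\mathbb{D}_x f\bigr)\mathbb{S}_x\mathbb{D}_x P\,R_{n,m}^{(1,0)}+\mathbb{D}_x f\,P\,R_{n,m}^{(1,0)}$. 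Summing the contributions with matching operators reproduces each stated coefficient; for example the $\mathbb{D}_x^2\mathbb{D}_y^2$ coefficient receives $\mathbb{S}_x f_1$ from the $f_1$ term and $\tfrac12\mathbb{S}_x f_2+U_2\mathbb{D}_x f_2$ from the $f_2$ term, giving precisely $f_{11}$, and the remaining seven coefficients are matched in the same mechanical way.

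The single point deserving genuine attention, and the origin of the eigenvalue shift, is the zeroth-order term. The identity $\mathbb{S}_x^2=\tfrac12\mathbb{S}_x\mathbb{D}_x+U_2\mathbb{D}_x^2+\mathrm{Id}$ shows that only the three summands carrying a single $\mathbb{S}_x$, those of $f_2,f_4,f_7$, generate a term free of $x$-operators, namely $\mathbb{D}_x f_2\,\mathbb{D}_y^2 R_{n,m}^{(1,0)}$, $\mathbb{D}_x f_4\,\mathbb{S}_y\mathbb{D}_y R_{n,m}^{(1,0)}$ and $\mathbb{D}_x f_7\,R_{n,m}^{(1,0)}$; the first two merely feed into $f_{61}$ and $f_{81}$, while the last is a true scalar contribution because $\mathbb{D}_x f_7=\beta_0-\beta_3$ (using $\mathbb{D}_x x(s)=1$). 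Adding this to the constant $(m+n)(\beta_3-\beta_0+m+n-1)$ that $\mathbb{D}_x$ carries over from the zeroth-order term of \eqref{eqdiv-diff-biv}, and writing $K=m+n$, $C=\beta_3-\beta_0$, gives $K(C+K-1)-C=(K-1)(C+K)$, which is exactly the claimed eigenvalue $(m+n-1)(\beta_3-\beta_0+m+n)$. The hard part is therefore not any single identity but keeping the multi-operator accounting consistent across all three groups; once the decomposition above is in place, matching coefficients is routine. The second equation then follows verbatim with $\mathbb{D}_y$ in place of $\mathbb{D}_x$, $U_2$ replaced by $V_2$, the roles of $(f_2,f_4,f_7)$ played by $(f_3,f_4,f_8)$, and the scalar shift arising from $\mathbb{D}_y f_8=\beta_0-\beta_3$.
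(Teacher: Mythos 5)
Your proposal is correct and follows exactly the route the paper takes (the paper's proof is the one-line statement that one applies $\mathbb{D}_x$, resp. $\mathbb{D}_y$, to \eqref{eqdiv-diff-biv} and uses the relations \eqref{eq:dxsx}); your three-group bookkeeping reproduces all eight coefficients, and your eigenvalue computation $\lambda_{n,m}+\mathbb{D}_x f_7=(m+n-1)(\beta_3-\beta_0+m+n)$ agrees with the remark following the theorem. You have simply supplied the details the paper leaves implicit.
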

\begin{proof}
The result is obtained by applying the operators $\mathbb{D}_x$ and $\mathbb{D}_y$ to the partial divided-difference equation \eqref{eqdiv-diff-biv} and using the relations \eqref{eq:dxsx}.
\end{proof}

Notice that for $i=1,\dots,8$ we have
\begin{align}
f_{i1}(x(s),y(t))&=f_{i}(x(s-1/2),y(t-1);\beta_{0},1+\beta_{1},2+\beta_{2},2+\beta_{3},N-1),\\
f_{i2}(x(s),y(t))&=f_{i}(x(s),y(t-1/2);\beta_{0},\beta_{1},\beta_{2}+1,\beta_{3}+2,N-1),\label{eq:fi2}
\end{align}
where $f_i=f_i(x(s),y(t);\beta_0,\beta_1,\beta_2,\beta_3,N)$ are already explicitly given in Theorem \ref{eq:theorem1}.

As a consequence of the latter equalities, we obtain the following relation which is the bivariate analogue of \eqref{eq:dur1}
\begin{multline*}
\mathbb{D}_x R_{n,m}(s,t;\beta_{0},\beta_{1},\beta_{2},\beta_3,N)= n(n-\beta_{0}+\beta_{2}-1) \\Ê\times
R_{n-1,m}(s-1/2,t-1;\beta_{0},\beta_{1}+1,\beta_{2}+2,\beta_3+2,N-1).
\end{multline*}

\begin{remark}
If we consider the following second family of the bivariate Racah polynomials obtained from \cite[Equation (2.12)]{MR1122519} using the transformations \eqref{substrat:Geronimo}
\begin{multline}\label{eq:secondfamilyracah}
\bar{R}_{n,m}(s,t;\beta_{0},\beta_{1},\beta_{2},\beta_3,N)=r_{n}(
{2m-\beta_1+\beta_3-1},
{\beta_1-\beta_0-1},
{m-N-1},
{m-N-\beta_{1}},
{N-m-s}) \\
\times r_{m}(\beta_3-\beta_2-1,
\beta_2-\beta_1-1,
{s-N-1},
{-\beta_2-N-s},
N-t),
\end{multline}
it follows that
\begin{equation*}
\mathbb{D}_y \bar{R}_{n,m}(s,t;\beta_{0},\beta_{1},\beta_{2},\beta_3,N) = m(m+\beta_3-\beta_1-1)
\bar{R}_{n,m-1}(s,t-1/2;\beta_{0},\beta_{1},\beta_{2}+1,\beta_3+2,N-1).
\end{equation*}
We should also note that both families of bivariate Racah polynomials $R_{n,m}(s,t;\beta_{0},\beta_{1},\beta_{2},\beta_3,N)$ and $\bar{R}_{n,m}(s,t;\beta_{0},\beta_{1},\beta_{2},\beta_3,N)$ are solution of the same divided-difference equation \eqref{eqdiv-diff-biv}.
\end{remark}
In \cite{MR2784425} (see Eq. (3.25) and Appendix for ${\mathcal{L}}_{1}^{x}$ and $\mu_{1}(n)$), it is shown that the bivariate Racah polynomials $R_{n,m}(s,t):= R_{n,m}(s,t;\beta_{0},\beta_{1},\beta_{2},\beta_3,N)$ are also solution of the difference equation
\begin{multline*}
\frac { \left( s+\beta_{{1}}-\beta_{{0}} \right) \left( s+\beta_{{1}} \right) \left( t+s+\beta_{{2}} \right) \left( t-s \right) }{
 \left( 2\,s+\beta_{{1}} \right) \left( 2\,s+\beta_{{1}}+1 \right) }(R_{n,m}(s+1,t)-R_{n,m}(s,t)) \\
 +{\frac { \left( s+\beta_{{0}} \right) s \left( t-s-\beta_{{1}}+\beta_{{2}} \right) \left( t+s+\beta_{{1}} \right) }{ \left( 2\,s+\beta_{{1}} \right)\left( 2\,s+\beta_{{1}}+1 \right) }}(R_{n,m}(s-1,t)-R_{n,m}(s,t))+n(\beta_2-\beta_0+n-1)R_{n,m}(s,t)=0.
\end{multline*}
Proceeding as above, we can rewrite this equation in the lattices $x(s)$ and $y(t)$ as
\begin{multline*}
(-(x(s))^2+x(s)y(t)+\Big(\beta_0\beta_2-\frac{\beta_1}{2}(\beta_2+\beta_0)\Big)x(s)+\frac{\beta_1}{2}(\beta_1-\beta_0)y(t))\mathbb{D}_x^2R_{n,m}(s,t)\\
+((\beta_0-\beta_2)x(s)+(\beta_1-\beta_0)y(t))\mathbb{S}_x\mathbb{D}_xR_{n,m}(s,t)+n(\beta_2-\beta_0+n-1)R_{n,m}(s,t)=0.
\end{multline*}

\begin{corollary}
The polynomial $R_{n,m}^{(1,1)}(s,t):=\mathbb{D}_x\mathbb{D}_yR_{n,m}(s,t)$ is solution of the fourth-order linear partial divided-difference equation
\begin{multline*}
f_{13}(x(s),y(t)) \mathbb{D}^2_x\mathbb{D}^2_y R_{n,m}^{(1,1)}(s,t)
+f_{23}(x(s),y(t))\mathbb{S}_x\mathbb{D}_x\mathbb{D}^2_y R_{n,m}^{(1,1)}(s,t)
+f_{33}(x(s),y(t))\mathbb{S}_y\mathbb{D}_y\mathbb{D}^2_x R_{n,m}^{(1,1)}(s,t)\\
+f_{43}(x(s),y(t)) \mathbb{S}_x\mathbb{D}_x\mathbb{S}_y\mathbb{D}_y R_{n,m}^{(1,1)}(s,t)
+ f_{53}(x(s))\mathbb{D}^2_x R_{n,m}^{(1,1)}(s,t)
+f_{63}(y(t))\mathbb{D}^2_y R_{n,m}^{(1,1)}(s,t)\\
+f_{73}(x(s)) \mathbb{S}_x\mathbb{D}_x R_{n,m}^{(1,1)}(s,t)
+f_{83}(y(t))\mathbb{S}_y\mathbb{D}_y R_{n,m}^{(1,1)}(s,t)+(m+n-2)
(\beta_{3}-\beta_{0}+m+n+1) R_{n,m}^{(1,1)}(s,t)=0,
\end{multline*}
where the coefficients are polynomials in the lattices $x(s)$ and $y(t)$ given by
\begin{align*}
f_{83}(y(t))&=f_{82}(y(t))+\mathbb{D}_x(f_{42}(x(s),y(t))),\\
f_{73}(x(s))&=\mathbb{S}_x(f_{72}(x(s)))+\frac{1}{2}\mathbb{D}_x(f_{72}(x(s)))+\mathbb{D}_x(f_{52}(x(s))),
\end{align*}
\begin{align*}
f_{63}(y(t))&=f_{62}(y(t))+\mathbb{D}_x(f_{22}(x(s),y(t))), \\
f_{53}(x(s))&=\mathbb{S}_x(f_{52}(x(s)))+ \mathbb{D}_x(f_{72}(x(s)))U_{2}(s)+\frac{1}{2}\mathbb{S}_x(f_{72}(x(s))),  \\
f_{43}(x(s),y(t))&=\frac{1}{2}\mathbb{D}_x(f_{42}(x(s),y(t)))+\mathbb{D}_x(f_{32}(x(s),y(t)))+ \mathbb{S}_x(f_{42}(x(s),y(t))), \\
f_{33}(x(s),y(t))&=\frac{1}{2}\mathbb{S}_x(f_{42}(x(s),y(t)))+\mathbb{S}_x(f_{32}(x(s),y(t)))+ \mathbb{D}_x(f_{42}(x(s),y(t)))U_{2}(s),  \\
f_{23}(x(s),y(t))&=\frac{1}{2}\mathbb{D}_x(f_{22}(x(s),y(t)))+\mathbb{D}_x(f_{12}(x(s),y(t)))+ \mathbb{S}_x(f_{22}(x(s),y(t))), \\
f_{13}(x(s),y(t))&=\frac{1}{2}\mathbb{S}_x(f_{22}(x(s),y(t)))+\mathbb{S}_x(f_{12}(x(s),y(t)))+\mathbb{D}_x(f_{22}(x(s),y(t)))U_{2}(s),
\end{align*}
where $U_{2}(s)$ is given in \eqref{eq:uuvv}.
\end{corollary}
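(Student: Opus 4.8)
The plan is to recognise that $R_{n,m}^{(1,1)}=\mathbb{D}_x R_{n,m}^{(0,1)}$ and to reuse verbatim the mechanism already established in the proof of Theorem~\ref{theorem:2}. By the second part of Theorem~\ref{theorem:2}, the polynomial $R_{n,m}^{(0,1)}=\mathbb{D}_y R_{n,m}$ satisfies a fourth-order divided-difference equation of exactly the same shape as \eqref{eqdiv-diff-biv}, with coefficients $f_{i2}$ and constant term $(m+n-1)(\beta_{3}-\beta_{0}+m+n)$. I would therefore simply apply the operator $\mathbb{D}_x$ to this equation and reduce, exactly as the first part of Theorem~\ref{theorem:2} applied $\mathbb{D}_x$ to \eqref{eqdiv-diff-biv}. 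Since the target formulas express $f_{i3}$ in terms of $f_{i2}$ by the very same recipe that produces $f_{i1}$ from $f_i$, most of the work is purely formal and identical to a step already carried out.

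Concretely, I would differentiate the equation for $R_{n,m}^{(0,1)}$ term by term with the Leibniz rule $\mathbb{D}_x(fg)=\mathbb{S}_x f\,\mathbb{D}_x g+\mathbb{D}_x f\,\mathbb{S}_x g$ from \eqref{eq:dxsx}, and then use the two reduction identities $\mathbb{D}_x\mathbb{S}_x f=\mathbb{S}_x\mathbb{D}_x f+\tfrac12\mathbb{D}_x^2 f$ and $\mathbb{S}_x^2 f=\tfrac12\mathbb{S}_x\mathbb{D}_x f+U_2(s)\mathbb{D}_x^2 f+f$ to re-express every composition through the eight canonical operators acting on $R_{n,m}^{(1,1)}=\mathbb{D}_x R_{n,m}^{(0,1)}$. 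Because $\mathbb{D}_x$ and $\mathbb{S}_x$ commute with $\mathbb{D}_y$ and $\mathbb{S}_y$, the $y$-operators pass through untouched, so collecting the coefficient of each canonical operator reproduces exactly the listed $f_{i3}$; for instance the coefficient of $\mathbb{S}_x\mathbb{D}_x R_{n,m}^{(1,1)}$ assembles as $\mathbb{S}_x(f_{72})+\tfrac12\mathbb{D}_x(f_{72})+\mathbb{D}_x(f_{52})=f_{73}$, in complete parallel with $f_{71}$.

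The one genuinely non-formal point, and the step I expect to require the most care, is the constant (eigenvalue) term, which does \emph{not} simply persist as $(m+n-1)(\beta_{3}-\beta_{0}+m+n)$. An extra identity contribution is generated by the trailing $+f$ in $\mathbb{S}_x^2 f=\tfrac12\mathbb{S}_x\mathbb{D}_x f+U_2(s)\mathbb{D}_x^2 f+f$. Tracking which reductions leave a pure $R_{n,m}^{(1,1)}$ term, one sees that the only such contribution comes from differentiating $f_{72}\,\mathbb{S}_x\mathbb{D}_x R_{n,m}^{(0,1)}$, whose expansion contains $\mathbb{D}_x(f_{72})\,\mathbb{S}_x^2(\mathbb{D}_x R_{n,m}^{(0,1)})$ and hence the identity piece $\mathbb{D}_x(f_{72})\,R_{n,m}^{(1,1)}$; every other $+f$ piece feeds an operator of strictly positive order in $x$ or $y$ (the ones carrying $\mathbb{S}_y\mathbb{D}_y$ or $\mathbb{D}_y^2$ flow into $f_{83}$ and $f_{63}$, not into the constant). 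Since $f_{72}(x(s))=f_7(x(s))+\mathbb{D}_y(f_4)$ is of degree one in $x(s)$, a short computation using $\mathbb{D}_y(f_4)=-2x(s)+(\beta_0-\beta_1)(\beta_3+1)$ gives $\mathbb{D}_x(f_{72})=\beta_0-\beta_3-2$, so the new constant equals
\[
(m+n-1)(\beta_{3}-\beta_{0}+m+n)+(\beta_{0}-\beta_{3}-2)=(m+n-2)(\beta_{3}-\beta_{0}+m+n+1),
\]
which is precisely the eigenvalue in the statement. With this verification in place, the remaining coefficient identifications are the same routine reductions as in Theorem~\ref{theorem:2}, and the Corollary follows.
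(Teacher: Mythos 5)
Your proposal is correct and is exactly the paper's argument: the paper's proof of this corollary is the one-line observation that it follows from Theorem \ref{theorem:2}, i.e.\ one applies $\mathbb{D}_x$ to the equation satisfied by $R_{n,m}^{(0,1)}$ and reduces with the identities \eqref{eq:dxsx}, which is precisely your plan. Your explicit check of the eigenvalue via $\mathbb{D}_x(f_{72})=\beta_0-\beta_3-2$ is consistent with the paper's Remark on the eigenvalues and is a welcome bit of extra care.
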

\begin{proof}
The result follows from Theorem \ref{theorem:2}.
\end{proof}

\begin{remark}
\begin{enumerate}
\item An interesting consequence of the above results is that the fourth-order linear partial divided-difference equation \eqref{eqdiv-diff-biv} is of hypergeometric type, i.e. the difference derivatives of a solution are solution of an equation of the same type.
\item We would like to notice that the eigenvalues of the equations satisfied by the polynomials $R^{(1,0)}_{n,m}(s,t)$, $R^{(0,1)}_{n,m}(s,t)$, and $ R^{(1,1)}_{n,m}(s,t)$ are given respectively by $\lambda_{m,n}+\mathbb{D}_xf_7(x(s))$, $\lambda_{m,n}+\mathbb{D}_yf_8(y(t))$, and
\begin{equation*}
\lambda_{m,n}+\mathbb{D}_yf_8(y(t))+\mathbb{D}_xf_{72}(x(s)) =
\lambda_{m,n}+\mathbb{D}_yf_8(y(s))+\mathbb{D}_xf_7(x(s))+\mathbb{D}_x\mathbb{D}_yf_4(x(s),y(t)).
\end{equation*}
\item The coefficients $f_{i3},\ i=1,\ldots,8$ can be also expressed as
\begin{align*}
f_{83}(y(t))&= \mathbb{S}_y(f_{81}(y(t)))+\frac{1}{2}\mathbb{D}_y(f_{81}(y(t)))+\mathbb{D}_y(f_{61}(y(t))),\\
f_{73}(x(s))&=f_{71}(x(s))+\mathbb{D}_y(f_{41}(x(s),y(t))),\\
f_{63}(y(t))&= \mathbb{D}_y(f_{81}(y(t)))V_{2}(t)+\frac{1}{2}\mathbb{S}_y(f_{81}(y(t)))+\mathbb{S}_y(f_{61}(y(t))),\\
f_{53}(x(s))&=f_{51}(x(s))+\mathbb{D}_y(f_{31}(x(s),y(t))), \\
f_{43}(x(s),y(t))&=\frac{1}{2}\mathbb{D}_y(f_{41}(x(s),y(t)))+\mathbb{D}_y(f_{21}(x(s),y(t)))+ \mathbb{S}_y(f_{41}(x(s),y(t))), \\
f_{33}(x(s),y(t))&=\frac{1}{2}\mathbb{D}_y(f_{31}(x(s),y(t)))+\mathbb{D}_y(f_{11}(x(s),y(t)))+ \mathbb{S}_y(f_{31}(x(s),y(t))),\\
f_{23}(x(s),y(t))&=\frac{1}{2}\mathbb{S}_y(f_{41}(x(s),y(t)))+\mathbb{S}_y(f_{21}(x(s),y(t)))+\mathbb{D}_y(f_{41}(x(s),y(t)))V_{2}(t),\\
f_{13}(x(s),y(t))&=\frac{1}{2}\mathbb{S}_y(f_{31}(x(s),y(t)))+\mathbb{S}_y(f_{11}(x(s),y(t)))+\mathbb{D}_y(f_{31}(x(s),y(t)))V_{2}(t),
\end{align*}
where $V_{2}(t)$ is given in \eqref{eq:uuvv}.
\end{enumerate}
\end{remark}

\subsection{Conjecture on the partial difference equation satisfied by the $p$-variate Racah polynomials}\label{sec:conjracah}

The following question arises naturally: can we give the general form and the order of the partial divided-difference equation satisfied by any $p$-variate Racah polynomials in terms of the operators $\mathbb{S}_x$ and $\mathbb{D}_x$? To answer this question, we need to recall some definitions.

Let $N,p$ be two positive integers, $\pmb{x}=(x_1,\ldots,x_p)$, $x_0=0$, $x_{p+1}=N$. The $p$-variate Racah polynomials are defined by \cite[Equation (3.10)]{MR2784425}
\begin{equation}\label{eq:mvrp}
R_{\pmb{n}}({\pmb{x}};{\pmb{\beta}};N)=\prod_{k=1}^p r_{n_k}(2N_1^{k-1}+\beta_k-\beta_0-1,\beta{k+1}-\beta_k-1,N_1^{k-1}-x_{k+1}-1,N_1^{k-1}+\beta_k+x_{k+1};-N_1^{k-1}+x_k),
\end{equation}
where ${\pmb{\beta}}=(\beta_0,\beta_1,\ldots,\beta_{p+1})$, ${\pmb{n}}=(n_1,n_2,\ldots,n_p)\in\mathbb{N}_0^p$ is such that $n_1+n_2+\ldots+n_p\leq N$, and $N_1^j=n_1+n_2+\ldots+n_j$ with $N_1^0=0$. $R_{\pmb{n}}({\pmb{x}};{\pmb{\beta}};N)$ is a polynomial in the lattices
\begin{equation}\label{eq:multilattices}
y_i(x_i)=x_i(x_i+\beta_i), \quad i=1,2,\ldots,p.
\end{equation}
Let us introduce the following notations: for any $l_1,l_2,\ldots,l_p\in\{0,1,2\}$, we define the operator $E_{(l_1,l_2,\ldots,l_p)}$ which is equal to the product of $\mathbb{S}_{y_i}\mathbb{D}_{y_i}$ and $\mathbb{D}_{y_i}^2$ such that for $i=1,2,\ldots,p$,  if $l_i=0$, there is not $\mathbb{S}_{y_i}\mathbb{D}_{y_i}$ and $\mathbb{D}_{y_i}^2$ in the product, if $l_i=1$, then there is $\mathbb{S}_{y_i}\mathbb{D}_{y_i}$ but not $\mathbb{D}_{y_i}^2$ in the product and if $l_i=2$, then there is $\mathbb{D}_{y_i}^2$ but not $\mathbb{S}_{y_i}\mathbb{D}_{y_i}$ in the product. This operator is defined explicitly by 
 \[E_{(l_1,l_2,\ldots,l_p)}=\prod_{i=0}^p\Big(\mathbb{S}_{y_i}\mathbb{D}_{y_i}\Big)^{-l_i(l_i-2)}\Big(\mathbb{D}_{y_i}^2\Big)^{\frac{1}{2}l_i(l_i-1)}.\]
 For example, for $p=3$, $E_{(0,1,0)}=\mathbb{S}_{y_2}\mathbb{D}_{y_2}$, $E_{(0,1,1)}=\mathbb{S}_{y_2}\mathbb{D}_{y_2}\mathbb{S}_{y_3}\mathbb{D}_{y_3}$, $E_{(2,1,0)}=\mathbb{S}_{y_2}\mathbb{D}_{y_2}\mathbb{D}_{y_1}^2$, $E_{(2,2,2)}=\mathbb{D}_{y_1}^2\mathbb{D}_{y_2}^2\mathbb{D}_{y_3}^2$, \ldots.
Using these notations, we can write for $p=1$ Equation \eqref{eqdiv-diff} as
\[
\phi(\eta(s)) E_{(2)} r_{n}(s)+ \tau(\eta(s)) E_{(1)} r_{n}(s)+ \lambda_{n} r_{n}(s)=0,
\]
where $E_{(2)}= {\mathbb{D}}_{\eta}^{2}$ and $E_{(1)}={\mathbb{S}}_{\eta} {\mathbb{D}}_{\eta}$. We can also write for $p=2$ Equation \eqref{eqdiv-diff-biv} as
\begin{multline}\label{eqmultRacGen}
f_1(x(s),y(t)) E_{(2,2)} R_{n,m}(s,t) + f_2(x(s),y(t))E_{(1,2)} R_{n,m}(s,t)
+f_3(x(s),y(t))E_{(2,1)}  R_{n,m}(s,t)\\
+f_4(x(s),y(t)) E_{(1,1)}  R_{n,m}(s,t)
+ f_5(x(s))E_{(2,0)}  R_{n,m}(s,t)+f_6(y(t))E_{(0,2)} R_{n,m}(s,t)+f_7(x(s)) E_{(1,0)} R_{n,m}(s,t)\\
+f_8(y(t))E_{(0,1)} R_{n,m}(s,t)+(m+n)(\beta_{3}-\beta_{0}+m+n-1) R_{n,m}(s,t)=0.
\end{multline}
We have the following conjecture
\begin{conjecture}
The $p$-variate Racah polynomials $R_{\pmb{n}}({\pmb{x}};{\pmb{\beta}};N)$ defined in \eqref{eq:mvrp} are solution of a $2p$-order partial linear divided-difference equation with polynomial coefficients $f_i(\pmb{x})$ of the form
\begin{multline}\label{conjectRacah}
\sum_{\underset{l_1+l_2+\cdots+l_p=i}{i=1}}^{2p} f_i(\pmb{x})E_{(l_1,l_2,\ldots,l_p)} R_{\pmb{n}}({\pmb{x}};{\pmb{\beta}};N)\\
+(n_1+n_2+\cdots+n_p)(\beta_{p+1}-\beta_0+n_1+n_2+\cdots+n_p-1) R_{\pmb{n}}({\pmb{x}};{\pmb{\beta}};N)=0,
\end{multline}
 where $f_i(\pmb{x})$ is a polynomial of degree $l_1+l_2+\cdots+l_p$ in the lattices $y_i(x_i)$, $i=1,\dots,p$, defined in \eqref{eq:multilattices} and if $l_j=0$, then $f_i$ does not depend on $x_j$.
 \end{conjecture}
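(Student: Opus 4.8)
The plan is to reproduce, for arbitrary $p$, the mechanism by which Theorem~\ref{eq:theorem1} was obtained from the rational-coefficient equation \eqref{racah_Geronimo_Iliev} of \cite{MR2784425}. The commutative algebra $\mathcal{A}_x$ of \cite{MR2784425} contains, for each $r=1,\dots,p$, a difference operator diagonalized by $R_{\pmb{n}}$; I would single out the operator $\mathcal{L}_p^x$ determined by $\mathcal{L}_p^x R_{\pmb{n}}+\mu_p(\pmb{n})R_{\pmb{n}}=0$ with $\mu_p(\pmb{n})=(n_1+\cdots+n_p)(\beta_{p+1}-\beta_0+n_1+\cdots+n_p-1)$, since for $p=1,2$ this is exactly the constant appearing in \eqref{eqdiv-diff} and \eqref{racah_Geronimo_Iliev}, and since the lower operators $\mathcal{L}_r^x$ only shift the first $r$ variables. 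The first step is to write $\mathcal{L}_p^x$ in difference-from-center form
\[
\mathcal{L}_p^x=\sum_{\pmb{\ell}\in\{-1,0,1\}^p\setminus\{\pmb{0}\}}c_{\pmb{\ell}}(\pmb{x})\,\bigl(T^{\pmb{\ell}}-I\bigr),
\]
where $T^{\pmb{\ell}}$ is the shift $x_i\mapsto x_i+\ell_i$ and the $c_{\pmb{\ell}}$ are rational in the lattices \eqref{eq:multilattices}; one must first confirm that, as for $p=2$, the stencil of $\mathcal{L}_p^x$ is contained in the $3^p$-point grid $\{-1,0,1\}^p$.

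The second step is a tensor-product change of basis. In each variable the operators $\mathbb{S}_{y_i}\mathbb{D}_{y_i}$ and $\mathbb{D}_{y_i}^2$, as well as $T_i^{\pm1}-I$, annihilate the constants, so $T_i^{\pm1}-I$ is a rational combination of $\mathbb{S}_{y_i}\mathbb{D}_{y_i}$ and $\mathbb{D}_{y_i}^2$ alone; this is precisely the inversion used inside the proof of Theorem~\ref{eq:theorem1} for $p=2$. Writing $T^{\pmb{\ell}}-I=\prod_{i=1}^p\bigl((T_i^{\ell_i}-I)+I\bigr)-I$ and expanding, the pure-identity term cancels and every remaining summand is, in each variable, one of $I$, $\mathbb{S}_{y_i}\mathbb{D}_{y_i}$, $\mathbb{D}_{y_i}^2$, hence a multiple of some $E_{(l_1,\dots,l_p)}$ with $(l_1,\dots,l_p)\neq\pmb{0}$. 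Collecting the multiplier of each $E_{(l_1,\dots,l_p)}$ defines the coefficient $f_i$ and leaves $\mu_p(\pmb{n})$ as the unique coefficient of $R_{\pmb{n}}$, which reproduces the stated form \eqref{conjectRacah}; the order is $2p$ because the corner shifts $\pmb{\ell}=(\pm1,\dots,\pm1)$ generate $E_{(2,\dots,2)}=\prod_{i=1}^p\mathbb{D}_{y_i}^2$.

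It then remains to establish the two structural claims: that each $f_i$ is a \emph{polynomial} of total degree at most $l_1+\cdots+l_p$ in the lattices \eqref{eq:multilattices}, and that it is free of $x_j$ whenever $l_j=0$. The degree bound I would expect to extract by matching the two sides against the total-degree filtration: since $\mathcal{L}_p^x$ preserves that filtration (its eigenvalue depending only on $n_1+\cdots+n_p$) while $E_{(l_1,\dots,l_p)}$ lowers total degree by exactly $l_1+\cdots+l_p$, a polynomial multiplier can raise it by at most $l_1+\cdots+l_p$. The genuinely hard point is the \emph{polynomiality} together with the independence-of-$x_j$ property. A priori each $f_i$ is only rational, with denominators of the form $(\beta_i+2x_i)(\beta_i+2x_i\pm1)$ inherited both from the $c_{\pmb{\ell}}$ and from the per-variable inversion of $T_i^{\pm1}-I$; the content of the statement is that after summation over $\pmb{\ell}$ all these denominators cancel and the dependence on every variable $x_j$ with $l_j=0$ disappears. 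That such cancellations occur is already visible, but non-obvious, for $p=2$: the coefficients $c_{(1,0)}$ and $c_{(-1,0)}$ both involve $t$, yet the collected $f_5$ attached to $E_{(2,0)}$ depends on $x(s)$ alone.

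For general $p$ the term-by-term verification behind Theorem~\ref{eq:theorem1} is not feasible, so the main obstacle is to replace it by a structural argument, and the route I would pursue is an induction on $p$. Using the factorization \eqref{eq:mvrp} of $R_{\pmb{n}}$ into a chain of univariate Racah factors in which the $k$-th factor couples only the consecutive variables $x_k$ and $x_{k+1}$, together with the nested structure $\mathcal{L}_1^x,\mathcal{L}_2^x,\dots$ of the algebra $\mathcal{A}_x$, one would express $\mathcal{L}_p^x$ through $\mathcal{L}_{p-1}^x$ and a boundary operator acting on $x_p$, and attempt to inherit the polynomiality, the degree bound, and the independence properties from level $p-1$. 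The difficulty is exactly that the block on the first $p-1$ variables still carries $x_p$ as a parameter (through the $(p-1)$-th factor), so controlling the interaction between the new variable and its neighbour $x_{p-1}$ in the inductive step is the crux that, at present, keeps the statement a conjecture rather than a theorem.
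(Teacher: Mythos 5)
This statement is stated in the paper as a \emph{conjecture}: the paper supplies no proof of it, only the case $p=2$ (Theorem \ref{eq:theorem1}), a computational recipe for extracting the coefficients $f_i$ once the form is assumed (substituting $R_{\pmb{l}}$ for each multi-index $\pmb{l}$ with $l_1+\cdots+l_p=i$), and an illustration of the analogous claim for $p=3$ in the trivariate continuous Hahn case. So there is no proof in the paper to compare yours against, and your proposal, by your own admission in its last sentence, is a strategy rather than a proof. Your reconstruction of the $p=2$ mechanism is faithful: identifying $\mathcal{L}_p^x$ by its eigenvalue $\mu_p(\pmb{n})$, assuming the stencil lies in $\{-1,0,1\}^p$ (which for general $p$ rests on Geronimo--Iliev's Theorem 4.6 and is the analogue the paper itself invokes), and inverting the $3^p\times 3^p$ linear system relating the shifts $T^{\pmb{\ell}}$ to the operators $E_{(l_1,\ldots,l_p)}$ is exactly how Theorem \ref{eq:theorem1} was obtained for $p=2$.

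The genuine gap is the one you name but do not close: after the change of basis, each candidate coefficient $f_i$ is a priori a rational function with denominators of the type $(\beta_j+2x_j)(\beta_j+2x_j\pm 1)$, and the conjecture asserts that all such denominators cancel, that the result is a polynomial of total degree $l_1+\cdots+l_p$ in the lattices $y_i(x_i)$, and that it is independent of $x_j$ whenever $l_j=0$. For $p=2$ this was verified by explicit computation; for general $p$ you propose an induction on $p$ exploiting the nested structure of $\mathcal{A}_x$, but you do not formulate the inductive hypothesis precisely enough to handle the coupling between $x_{p-1}$ and $x_p$ through the $(p-1)$-th univariate Racah factor, and you do not prove the cancellation in the inductive step. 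Note also that your degree argument is conditional on polynomiality already being established, so it cannot be separated from the missing cancellation lemma. As it stands, the proposal is a plausible programme for upgrading the conjecture to a theorem, not a proof of it.
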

 \begin{remark}
 \begin{enumerate}
 \item Note that the eigenvalue $-(n_1+n_2+\cdots+n_p)(\beta_{p+1}-\beta_0+n_1+n_2+\cdots+n_p-1)$ is given in \cite[Theorem 3.6]{MR2784425}.
 \item For any $i$ from 1 to $2p$, we take all the combinations of $l_1,l_2,\ldots,l_p\in\{0,1,2\}$ such that $l_1+l_2+\cdots+l_p=i$. It follows that Equation \eqref{eqmultRacGen} has $3^p$ polynomials coefficients since it is supported on the cube $\{0,1,2\}^p$.
 \item Starting from $i=0$ and ending at $i=2p$, for each combination of $l_1,l_2,\ldots,l_p\in\{0,1,2\}$ such that $l_1+l_2+\cdots+l_p=i$, we substitute $R_{\pmb{l}}({\pmb{x}};{\pmb{\beta}};N)$ in \eqref{conjectRacah} to get the coefficient $f_i(x)$.
\item Equation \eqref{conjectRacah} is the analogue of Equation (4.14a) given in \cite[Theorem 4.6]{MR2784425}.
 \end{enumerate}
 \end{remark}

\section{Three-term recurrence relations for bivariate orthogonal polynomial solutions of \eqref{eqdiv-diff-biv}}\label{sec:3}

{}From the partial divided-difference equation \eqref{eqdiv-diff-biv} satisfied by bivariate Racah polynomials we first derive the matrix coefficients in the three-term recurrence relations satisfied by any bivariate orthogonal polynomial solution of the equation. We give explicitly the recurrences satisfied by those families of bivariate Racah polynomials defined in \eqref{eq:brp} and \eqref{eq:secondfamilyracah}. The family of monic bivariate Racah polynomials is introduced from the three-term recurrence relations it obeys, by following a similar approach as already considered in the continuous case \cite{MR2853206}, discrete case \cite{AGR2012} and their $q$-analogues \cite{AAGR2013}. Moreover, by using these results we explicitly solve the connection problem between bivariate Racah polynomials \eqref{eq:brp} and \eqref{eq:secondfamilyracah}.

\medskip
Let $x(s)$ and $y(t)$ be the quadratic lattices defined in \eqref{eq:lattices} and let us denote $\textbf{x}=(x(s),y(t))$ and
$\textbf{x}^n$ ($n\in \mathbb{N}_0$) the column vector of the monomials $x(s)^{n-k} y(t)^{k}$, whose elements are arranged in
graded lexicographical order (see \cite[p. 32]{MR1827871}):
\begin{equation*}
\textbf{x}^n= (x(s)^{n-k}y(t)^{k})\,,\quad 0 \leq k \leq n, \quad
n\in \mathbb{N}_0\,.
\end{equation*}
Let $\{P_{n-k,k}(x(s),y(t))\}$ be a sequence of polynomials in the space $\Pi_n^2$ of all polynomials of total degree at most $n$
in two variables, $\textbf{x}=(x(s),y(t))$, with real coefficients satisfying \eqref{eqdiv-diff-biv}. These polynomials can be expressed as finite sum of terms of the form $ax(s)^{n-k}y(t)^{k}$, where $a \in \mathbb{R}$.

Let ${\mathbf{P}}_n$ denote the (column) polynomial vector of the polynomials $P_{n-k,k}(x(s),y(t))$ of total degree $n$,
\begin{equation*}
{\mathbf{P}}_n= (P_{n,0}(x(s),y(t)), P_{n-1,1}(x(s),y(t)),\dots,P_{1,n-1}(x(s),y(t)), P_{0,n}(x(s),y(t)))^{T}.
\end{equation*}
Then,
\begin{equation}\label{EXPP}
{\mathbf{P}}_n= G_{n,n}\textbf{x}^n+ G_{n,n-1}\textbf{x}^{n-1}+ G_{n,n-2}\textbf{x}^{n-2}+\cdots + G_{n,0}\,\textbf{x}^0,
\end{equation}
where $G_{n,j}$ are matrices of size $(n+1)\times(j+1)$ and $G_{n,n}$ is a nonsingular square matrix of size $(n+1)\times(n+1)$.

Following \cite{FKKM}, let us introduce the following bases $\{F_n(x(s))\}_{n\in\mathbb N}$ and $\{F_n(y(t))\}_{n\in\mathbb N}$ of monic polynomials in the quadratic lattices $x(s)$ and $y(t)$ defined in \eqref{eq:lattices}
\begin{align}
F_n(x(s))=(-4)^{-n} \left(-\beta_{1}-2 s+\frac{1}{2}\right)_n \left(\beta_{1}+2 s+\frac{1}{2}\right)_n, \label{eq:baseff1} \\
F_n(y(t))=(-4)^{-n} \left(-\beta_{2}-2 t+\frac{1}{2}\right)_n \left(\beta_{2}+2 t+\frac{1}{2}\right)_n \label{eq:baseff2},
\end{align}
where we have used the Pochhammer symbol. If we denote the column vector
\begin{equation*}
\textbf{F}_n= (F_{n-k}(x(s))F_{k}(y(t)))\,,\quad 0 \leq k \leq n, \quad n\in \mathbb{N}_0\,,
\end{equation*}
we can also write
\begin{equation}\label{EXPP1}
{\mathbf{P}}_n= G_{n,n}'\textbf{F}_n+ G_{n,n-1}'\textbf{F}_{n-1}+ G_{n,n-2}'\textbf{F}_{n-2}+\dots + G_{n,0}'\,\textbf{F}_0,
\end{equation}
where $G_{n,j}'$ are matrices of size $(n+1)\times(j+1)$ and $G_{n,n}'$ is a nonsingular square matrix of size $(n+1)\times(n+1)$.

Next, we shall give explicit expressions for the matrices $A_{n,j}$ of size $(n+1) \times (n+2)$, $B_{n,j}$ of size $(n+1) \times (n+1)$, and $C_{n,j}$ of size $(n+1) \times n$ appearing in the three-term recurrence relations
\begin{equation}\label{RRTT}
x_j{\mathbf{P}}_n=A_{n,j}{\mathbf{P}}_{n+1} + B_{n,j}{\mathbf{P}}_{n} + C_{n,j}{\mathbf{P}}_{n-1}, \quad j =1, 2,
\end{equation}
with the initial conditions ${\mathbf{P}}_{-1}=0$ and ${\mathbf{P}}_{0}=1$, where we have used the notations $x_{1}=x(s)$ and
$x_{2}=y(t)$ as well as \eqref{eq:lattices}, in terms of the polynomial coefficients of the fourth-order linear partial divided-difference equation \eqref{eqdiv-diff-biv}. As a consequence, we shall obtain the matrices $A_{n,j}$, $B_{n,j}$ and $C_{n,j}$ for both families of bivariate Racah polynomials \eqref{eq:brp} and \eqref{eq:secondfamilyracah}. Moreover, we shall introduce the family of monic bivariate Racah polynomials also solution of \eqref{eqdiv-diff-biv}. In doing so, we shall need the following properties \cite{FKKM} of the bases $\{F_n(x(s))\}_{n\in\mathbb N}$ and $\{F_n(y(t))\}_{n\in\mathbb N}$ defined in \eqref{eq:baseff1} and \eqref{eq:baseff2} respectively,
\begin{align}
&\mathbb D_x F_{n}(x(s))=n F_{n-1}(x(s)), \quad \mathbb D_y F_{n}(y(t))=n F_{n-1}(y(t)), \label{1}\\
& x(s)F_n(x(s))=F_{n+1}(x(s))+f_{n}(\beta_{1})\,F_{n}(x(s)),\quad y(t)F_n(y(t))=F_{n+1}(y(t))+f_{n}(\beta_{2})\,F_{n}(y(t)), \label{3} \\
&\mathbb S_x F_{n}(x(s))=F_{n}(x(s))+g_{n} \,F_{n-1}(x(s)), \quad S_y F_{n}(y(t))=F_{n}(y(t))+g_{n} \,F_{n-1}(y(t)), \label{2}
\end{align}
with
\begin{equation}\label{eq:gammafg}
f_{n}(\beta_{i})=\frac{1}{16} \left((2 n+1)^2-4 \beta_{i}^2\right), \,\,i=1,2, \qquad g_{n}=\frac{1}{4} n (2 n-1).
\end{equation}

{}From \eqref{1}, \eqref{3}, and \eqref{2} we obtain the following identities of column matrices
\begin{equation*}
\begin{cases}
\mathbb D_x \textbf{F}_n=E_{n,1}\textbf{F}_{n-1}, \quad
\mathbb D_y \textbf{F}_n=E_{n,2}\textbf{F}_{n-1}, \\
\mathbb S_x \textbf{F}_n=\textbf{F}_n+J_{n,1}\textbf{F}_{n-1}, \quad
\mathbb S_y \textbf{F}_n=\textbf{F}_n+J_{n,2}\textbf{F}_{n-1}, \\
x(s)\textbf{F}_n=L_{n,1}\textbf{F}_{n+1}+M_{n,1}\textbf{F}_n,\quad
y(t)\textbf{F}_n=L_{n,2}\textbf{F}_{n+1}+M_{n,2}\textbf{F}_n,
\end{cases}
\end{equation*}
where $E_{n,1}$ and $E_{n,2}$ are the matrices of size $(n+1)\times n$ given by
\[
E_{n,1}=
\begin{pmatrix}
  n & 0 & \ldots & 0 \\
  0 & {n-1} & \ddots & \vdots \\
  \vdots & \ddots & \ddots & 0 \\
  \vdots &  & \ddots & 1 \\
  0 & \ldots & \ldots & 0 \\
\end{pmatrix}, \qquad
E_{n,2}=
\begin{pmatrix}
  0 & \ldots & \ldots & 0 \\
  1 & \ddots &  & \vdots \\
  0 & 2 & \ddots & \vdots \\
  \vdots & \ddots & \ddots & 0 \\
  0 & \ldots & 0 & n \\
\end{pmatrix},
\]
$J_{n,1}$ and $J_{n,2}$ are the matrices of size $(n+1)\times n$ given by
\begin{equation*}
J_{n,1}=
\begin{pmatrix}
  g_{n} & 0 & \ldots & 0 \\
  0 & g_{n-1} & \ddots & \vdots \\
  \vdots & \ddots & \ddots & 0 \\
  \vdots &  & \ddots & g_{1} \\
  0 & \ldots & \ldots & 0 \\
\end{pmatrix}, \qquad
J_{n,2}=
\begin{pmatrix}
  0 & \ldots & \ldots & 0 \\
g_{1} & \ddots &  & \vdots \\
  0 & g_{2} & \ddots & \vdots \\
  \vdots & \ddots & \ddots & 0 \\
  0 & \ldots & 0 & g_{n} \\
\end{pmatrix},
\end{equation*}
where $g_{n}$ are given in \eqref{eq:gammafg}, $L_{n,1}$ and $L_{n,2}$ are the matrices of size $(n+1)\times(n+2)$
\begin{equation}\label{eq:llmatrices}
L_{n,1}=
\begin{pmatrix}
  1 & 0 & \ldots & \ldots & 0 \\
  0 & 1 & \ddots &  & \vdots \\
  \vdots & \ddots & \ddots & \ddots & \vdots \\
  0 & \ldots & 0 & 1 & 0 \\
\end{pmatrix},
\qquad L_{n,2}=
\begin{pmatrix}
  0 & 1 & 0 & \ldots & 0 \\
  \vdots & \ddots & 1 & \ddots & \vdots \\
  \vdots &  & \ddots & \ddots & 0 \\
  0 & \ldots & \ldots & 0 & 1 \\
\end{pmatrix},
\end{equation}
and $M_{n,1}$ and $M_{n,2}$ are the matrices of size $(n+1)\times(n+1)$
\[
M_{n,1}=
\begin{pmatrix}
f_{n}(\beta_{1}) & 0 & \ldots & 0 \\
  0 & f_{n-1}(\beta_{1}) & \ddots & \vdots \\
  \vdots & \ddots & \ddots & 0 \\
  0 & \ldots & 0 & f_{0}(\beta_{1}) \\
\end{pmatrix}, \qquad
M_{n,2}=
\begin{pmatrix}
f_{0}(\beta_{2}) & 0 & \ldots & 0 \\
  0 & f_{1}(\beta_{2}) & \ddots & \vdots \\
  \vdots & \ddots & \ddots & 0 \\
  0 & \ldots & 0 & f_{n}(\beta_{2}) \\
\end{pmatrix},
\]
where $f_{n}(\beta_{1})$ and $f_{n}(\beta_{2})$ are given in \eqref{eq:gammafg}.

\medskip
If we substitute the expansion \eqref{EXPP1} in \eqref{eqdiv-diff-biv}, by equating the coefficients in  $\textbf{F}_{n-1}$ and $\textbf{F}_{n-2}$ we obtain the following explicit expressions for the matrices ${G}_{n,n-1}'$ and ${G}_{n,n-2}'$:
\begin{equation}\label{eq:matricesgp}
\begin{cases}
G_{n,n-1}' &= G_{n,n}'\mathbf{S}_n {\mathbf{Z}}_{n-1}^{-1}(\lambda_n), \\
G_{n,n-2}' &= (G_{n,n}'{\mathbf{T}}_n+G_{n,n-1}'{\mathbf{S}}_{n-1}) {\mathbf{Z}}_{n-2}^{-1}(\lambda_n),
\end{cases}
\end{equation}
in terms of the nonsingular matrices $G_{n,n}'$, where
\[
{\mathbf{Z}}_n(\lambda_{\ell})=(\lambda_n-\lambda_{\ell}) {\mathbf{I}}_{n+1},
\]
${\mathbf{I}}_{n}$ stands for the identity matrix of size $n$, and $\lambda_{n}=n(\beta_{3}-\beta_{0}+n-1) $.

The matrix ${\mathbf{S}}_{n}$ of size $(n+1)\times n$ is given in terms of the polynomial coefficients of the equation \eqref{eqdiv-diff-biv} as
\begin{equation}\label{eq:sn}
{\mathbf{S}}_n=\left(%
\begin{array}{cccc}
  s_{1,1} & 0 & \ldots & 0 \\
  s_{2,1} & s_{2,2} & \ddots & \vdots \\
  0 & s_{3,2} & \ddots & 0 \\
  \vdots & \ddots & \ddots & s_{n,n} \\
  0 & \ldots & 0 & s_{n+1,n} \\
\end{array}%
\right) \qquad (n\geq 1),
\end{equation}
where
\begin{align*}
s_{k,k}&=-\frac{1}{16} (k-n-1) \left(-\beta _3+8 \beta _1 \left(2 \left(k+n-k n+N^2-1\right)+\beta _1 (n-1)\right)+\beta _0 \left(-4 \beta _1^2+8 \beta _1 (k-n) \right. \right. \\
& \left. \left. -4 (k-3 n) (k+n)+16 \beta _3 (n-N-1)-32 n-16 N^2+17\right)+16 N^2 (n-k)+4 \beta _3 \left(\beta _1-k+n\right) \right. \\
& \left. \times \left(\beta _1-k-3 n+4 N+4\right)-2 (n-1) (-4 (k-2) k+4 (n-2) n+1)\right), \qquad  k=1,\ldots,n,\\
s_{k+1,k}&=\frac{1}{16} k \left(-13 \beta _3+4 \beta _3 \left(k^2+\beta _2 \left(\beta _2-2 k+4 N+2\right)-2 k (2 N+1)-4 \left(n^2-2 n (N+1)+N\right)\right) \right. \\
& \left. +8 \beta _2 \left(2 \left(-k n+k+(n-1) n+N^2\right)+\beta _2 (n-1)\right)+\beta _0 \left(-4 \beta _2^2+8 \beta _2 (k-2 n+1) \right. \right. \\
& \left. \left. -4 k (k-4 n+2)+16 \beta _3 (n-N-1)-16 n-16 N^2+13\right)-16 N^2 (k-2 n+1) \right. \\
& \left. +2 (n-1) (4 k (k-2 n)+8 n-5)\right), \qquad k=1,\ldots,n.
\end{align*}

Moreover, the matrix $\mathbf{T}_n$ of size $(n+1)\times (n-1)$ is given in terms of the polynomial coefficients of the equation
\eqref{eqdiv-diff-biv} as
\begin{equation}\label{eq:tn}
\mathbf{T}_n=\left(%
\begin{array}{ccccc}
  t_{1,1} & 0 & \cdots & \cdots & 0 \\
  t_{2,1} & t_{2,2} & \ddots &  & \vdots \\
  t_{3,1} & t_{3,2} & \ddots & \ddots & \vdots \\
  0 & t_{4,2} & \ddots & \ddots & 0 \\
  \vdots & \ddots & \ddots & \ddots & t_{n-1,n-1} \\
  \vdots &  & \ddots & \ddots & t_{n,n-1} \\
  0 & \cdots & \cdots & 0 & t_{n+1,n-1} \\
\end{array}%
\right) \qquad (n\geq 2),
\end{equation}
where, for $1\leq k\leq n-1$,
\begin{align*}
t_{k,k}&=-\frac{1}{256} (n-k) (n-k+1) \left(-2 \beta _1+2 k-2 n+1\right) \left(4 \beta _0-2 \beta _1+2 k-2 n+1\right) \\
& \times \left(-2 \beta _1+2 k+2 n-4 N-5\right) \left(-2 \beta _1+4 \beta _3+2 k+2 n+4 N-5\right),\\
t_{k+2,k}&=-\frac{1}{256} k (k+1) \left(-2 \beta _2+2 k-4 n+5\right) \left(4 \beta _0-2 \beta _2+2 k-4 n+5\right) \left(-2 \beta _2+2 k-4 N-1\right) \\
&\times \left(-2 \beta _2+4 \beta _3+2 k+4 N-1\right),  \\
t_{k+1,k}&=\frac{1}{128} k (k-n) \left(-160 \beta _3+8 \beta _1 \left(8 \beta _2 \left(k^2-k n+\beta _3 (k-2 N-1)-2 N^2+1\right)-4 \beta _2^2 \left(\beta _3+k+n \right. \right. \right. \\
& \left. \left. \left. -2\right)+\beta _3  (-4 k (k-4 n+4)-16 n N-8 n+16 N+5)+k (-4 k (k-3 n+2)-16 n+13) \right. \right. \\
& \left. \left. -16 n N^2+5 n+16 N^2-2\right)+4 \beta _2 \left(2 \left(-4 k^3+8 k^2+k \left(4 n (3 n-8)+16
   N^2+13\right)-2 (n-1)   \right. \right. \right. \\
& \left. \left. \left. \times \left(4 (n-2) n+8 N^2+1\right)\right)+\beta _2 \left(8 k n+4 (k-4) k-12 n^2+32 n-21\right)\right)+8 \beta _0 \left(-26 \beta _3+4 k^3 \right. \right.  \\
& \left. \left.  +\beta _2   \left(-4 k^2 -8 k (n-2)+4 \beta _2 (2 n-3)+12 (n-2) n+16 N^2+5\right)+4 \beta _3 \left(2 k^2-2 k (n+2 N) \right. \right. \right. \\
& \left. \left. \left. +\beta _2 \left(\beta _2-2 k+4 N+2\right)+n (8 N-3 n+10)-8 N\right)-8 k^2+4 \beta _1^2 \left(\beta _3-\beta _2+k-1\right)-12 k n^2 \right. \right. \\
& \left. \left. -8 \beta _1 \left(-\beta _2+\beta _3+k-1\right) (k-n+1)-16 N^2 (k-2 n+2)+32 k n-13 k+12 n^2-34n+20\right) \right.   
\end{align*}
\begin{align*}
& \left. +8 \beta _3 \left(4 k^3-4 k^2 (3 n+2 N-2)-4 \beta _2 (k-n+1) \left(-\beta _2+2 k-4 N-2\right)+k (32 (n-1) N+16 n \right. \right.  \\
& \left. \left.-13)+n (4 n (2 n-6 N-9)+48 N+47)-26 N\right)+4 \left(4 k^4-8 k^3 n-2 k^2 \left(6 (n-4) n+8 N^2 \right. \right. \right. \\
& \left. \left. \left. +13\right)+2 k \left(32 (n-1) N^2+n (8 (n-3) n+13)\right)+n \left((63-16 n) n-48 (n-2) N^2\right)\right) \right. \\
& \left.+4 \beta _1^2 \left(4 \beta _2^2-8 \beta _2 (k-2 n+2)+4 k (k-4 n+4)+8 \beta _3 (-2 n+2 N+3)+16 n+16 N^2-21\right) \right. \\
& \left. -304 n-208 N^2+121\right).
\end{align*}

In order to obtain the coefficients $G_{n,n-1}$ and $G_{n,n-2}$ of $\mathbf P_n$ in \eqref{EXPP}, we use the following relations:
\begin{align}
    F_n(x(s))&=x(s)^n+H_{n,n-1}^{(1)}x(s)^{n-1}+H_{n,n-2}^{(1)}x(s)^{n-2}+\textrm{terms of lower degree}, \label{eq:rel1}\\
        F_n(y(t))&=y(t)^n+H_{n,n-1}^{(2)}y(t)^{n-1}+H_{n,n-2}^{(2)}y(t)^{n-2}+\textrm{terms of lower degree},\label{eq:rel2}
\end{align}
where
\begin{equation}\label{eq:hhhh}
\begin{cases}
H_{n,n-1}^{(1)}=\displaystyle{\frac{1}{48} \left(-4 n^3+12 \beta _1^2 n+n\right)}, \\[2mm]
H_{n,n-2}^{(1)}=\displaystyle{\frac{(n-1) n \left(720 \beta _1^4+120 \beta _1^2 \left(1-4 n^2\right)+(2 n-3) (2 n-1) (2 n+1) (10 n+7)\right)}{23040}}, \\[2mm]
H_{n,n-1}^{(2)}=\displaystyle{\frac{1}{48} \left(-4 n^3+12 \beta _2^2 n+n\right)}, \\[2mm]
H_{n,n-2}^{(2)}=\displaystyle{\frac{(n-1) n \left(720 \beta _2^4+120 \beta _2^2 \left(1-4 n^2\right)+(2 n-3) (2 n-1) (2 n+1) (10 n+7)\right)}{23040}},
\end{cases}
\end{equation}
to write
\begin{eqnarray}\label{4}
    \textbf{F}_n=\textbf{x}^n+U_{n,n-1}\textbf{x}^{n-1}+U_{n,n-2}\textbf{x}^{n-2}+\textrm{terms of lower degree}
\end{eqnarray}
where $U_{n,n-1}$ is the matrix of size $(n+1)\times n$
\[
U_{n,n-1}=
\begin{pmatrix}
  H_{n,n-1}^{(1)} & 0 & \ldots & 0 \\
  H_{1,0}^{(2)} & H_{n-1,n-2}^{(1)} & \ddots & \vdots \\
  0 & H_{2,1}^{(2)} & \ddots & 0 \\
  \vdots & \ddots & \ddots & H_{1,0}^{(1)} \\
  0 & \ldots & 0 & H_{n,n-1}^{(2)}
\end{pmatrix},
\]
and $U_{n,n-2}$ is the matrix of size $(n+1)\times(n-1)$
\[
U_{n,n-2}=
\begin{pmatrix}
  H_{n,n-2}^{(1)} & 0 & \cdots & \cdots & 0 \\
  H_{n-1,n-2}^{(1)}H_{1,0}^{(2)} & H_{n-1,n-3}^{(1)} & \ddots &  & \vdots \\
  H_{2,0}^{(2)} & H_{n-2,n-3}^{(1)}H_{2,1}^{(2)} & \ddots & \ddots & \vdots \\
  0 & H_{3,1}^{(2)} & \ddots & \ddots & 0 \\
  \vdots & \ddots & \ddots & \ddots & H_{2,0}^{(1)} \\
  \vdots &  & \ddots & \ddots & H_{1,0}^{(1)}H_{n-1,n-2}^{(2)} \\
  0 & \cdots & \cdots & 0 & H_{n,n-2}^{(2)}
\end{pmatrix}.
\]
By replacing \eqref{4} into \eqref{EXPP1} and identifying to \eqref{EXPP} it yields
\begin{equation}\label{eq:matricesgn}
\begin{cases}
G_{n,n}=G_{n,n}' ,\\
G_{n,n-1}= G_{n,n}'U_{n,n-1}+G_{n,n-1}' ,\\
G_{n,n-2}= G_{n,n}'U_{n,n-2}+G_{n,n-1}'U_{n-1,n-2}+G_{n,n-2}',
\end{cases}
\end{equation}
where the matrices $G_{n,n-1}'$ and $G_{n,n-2}'$ are given in \eqref{eq:matricesgp} in terms of the leading coefficient $G_{n,n}'$ of ${\mathbf{P}}_{n}$ \eqref{EXPP1}.

As a consequence, we obtain
\begin{theorem}\label{eq:matricesTTRR}
The explicit expressions of the matrices $A_{n,j}$, $B_{n,j}$ and $C_{n,j}$ ($j=1,2$) in the three-term recurrence relations
\eqref{RRTT} can be expressed in terms of the coefficients of \eqref{eqdiv-diff-biv} as
\begin{align*}
A_{n,j}&=G_{n,n}L_{n,j}G_{n+1,n+1}^{-1}, \quad n \geq 0,\\
B_{0,j}&=\left(-A_{0,j} G_{1,0}\right)G_{0,0}^{-1},  \\
B_{n,j}&=\left(G_{n,n-1}L_{n-1,i}-A_{n,j} G_{n+1,n}\right)G_{n,n}^{-1}, \quad n \geq 1,\\
C_{1,j}&=\left(-A_{1,j}G_{2,0}-B_{1,j}G_{1,0}\right)G_{0,0}^{-1}, \\
C_{n,j}&=\left(G_{n,n-2}L_{n-2,j}-A_{n,j}G_{n+1,n-1}-B_{n,j}G_{n,n-1}\right)G_{n-1,n-1}^{-1}, \quad n \geq 2,
\end{align*}
where the matrices $G_{n,n-1}$ and $G_{n,n-2}$ are given in \eqref{eq:matricesgn}.
\end{theorem}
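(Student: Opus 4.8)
The plan is to derive each family of matrices by systematically matching the coefficients of the polynomial vectors $\mathbf{P}_n$, $\mathbf{P}_{n+1}$, and $\mathbf{P}_{n-1}$ in the basis $\{\textbf{x}^k\}$, exploiting the fact that multiplication by $x_j$ raises total degree by exactly one. First I would take the three-term recurrence \eqref{RRTT} for a fixed $j\in\{1,2\}$ and substitute the expansion \eqref{EXPP} of each vector $\mathbf{P}_n$ in terms of the monomial vectors $\textbf{x}^k$. On the left-hand side, multiplication by $x_j$ acts on the monomial vectors via the matrices $L_{n,j}$ defined in \eqref{eq:llmatrices}: since $x_j\,\textbf{x}^n$ is again a vector of monomials of degree $n+1$, we have $x_j\,\textbf{x}^n=L_{n,j}\textbf{x}^{n+1}$, so that $x_j\mathbf{P}_n=\sum_{k} G_{n,k} L_{k,j}\textbf{x}^{k+1}$. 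The right-hand side is $A_{n,j}\mathbf{P}_{n+1}+B_{n,j}\mathbf{P}_n+C_{n,j}\mathbf{P}_{n-1}$, which I would likewise expand using \eqref{EXPP}.

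The core of the argument is then to equate the coefficient matrices of $\textbf{x}^{n+1}$, $\textbf{x}^{n}$, and $\textbf{x}^{n-1}$ on both sides, proceeding from the highest degree downward. Matching the $\textbf{x}^{n+1}$ term gives $G_{n,n}L_{n,j}=A_{n,j}G_{n+1,n+1}$; since $G_{n+1,n+1}$ is nonsingular, this yields $A_{n,j}=G_{n,n}L_{n,j}G_{n+1,n+1}^{-1}$ at once. Next, matching the $\textbf{x}^{n}$ term produces $G_{n,n-1}L_{n-1,j}=A_{n,j}G_{n+1,n}+B_{n,j}G_{n,n}$; solving for $B_{n,j}$ (using that $G_{n,n}$ is invertible) gives the stated formula $B_{n,j}=(G_{n,n-1}L_{n-1,j}-A_{n,j}G_{n+1,n})G_{n,n}^{-1}$ for $n\ge 1$. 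Finally, matching the $\textbf{x}^{n-1}$ term gives $G_{n,n-2}L_{n-2,j}=A_{n,j}G_{n+1,n-1}+B_{n,j}G_{n,n-1}+C_{n,j}G_{n-1,n-1}$, and isolating $C_{n,j}$ yields the stated expression for $n\ge 2$. The low-order cases $B_{0,j}$ and $C_{1,j}$ follow by the same identification using the convention $\mathbf{P}_{-1}=0$ and $\mathbf{P}_0=1$, noting that for these the term of one degree lower is simply $G_{1,0}$ or $G_{2,0}$, $G_{1,0}$ respectively.

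Throughout, the matrices $G_{n,k}$ are supplied concretely by \eqref{eq:matricesgn}, which expresses $G_{n,n}$, $G_{n,n-1}$, and $G_{n,n-2}$ in terms of the leading coefficient $G_{n,n}'$ and the transition matrices $U_{n,n-1}$, $U_{n,n-2}$ coming from the change of basis \eqref{4} between $\textbf{F}_n$ and $\textbf{x}^n$; in turn $G_{n,n-1}'$ and $G_{n,n-2}'$ are determined by \eqref{eq:matricesgp} via the matrices $\mathbf{S}_n$ and $\mathbf{T}_n$ read off from the divided-difference equation \eqref{eqdiv-diff-biv}. Hence every matrix appearing in the final formulas is already expressed through the polynomial coefficients of the equation, which is exactly the assertion of the theorem.

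The step I expect to be the main obstacle is not the degree-matching itself, which is a routine block-triangular identification, but rather ensuring that the shape conventions are internally consistent: one must verify that $L_{n,j}$ indeed realizes multiplication by $x_j$ on the graded-lexicographic monomial vectors and that the index shifts in $G_{n+1,n}$ and $G_{n+1,n-1}$ align with the $(n+1)\times(j+1)$ sizing declared for $G_{n,j}$, so that all matrix products are conformable. The genuinely substantive input is the nonsingularity of $G_{n,n}$ (equivalently $G_{n,n}'$), established in \eqref{EXPP}--\eqref{EXPP1}, which is what licenses the inversions used to isolate $A_{n,j}$, $B_{n,j}$, and $C_{n,j}$.
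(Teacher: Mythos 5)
Your proposal is correct and coincides with the paper's (implicit) argument: the paper likewise obtains these formulas by substituting the expansion \eqref{EXPP} into \eqref{RRTT}, using that multiplication by $x_j$ acts on the monomial vectors through $L_{n,j}$, and matching the coefficient matrices of $\textbf{x}^{n+1}$, $\textbf{x}^{n}$, $\textbf{x}^{n-1}$ (the lowest-order cases $B_{0,j}$, $C_{1,j}$ following from ${\mathbf{P}}_{-1}=0$, ${\mathbf{P}}_{0}=1$), with the invertibility of $G_{n,n}$ licensing each solve. Your identification of the key inputs --- the nonsingularity of $G_{n,n}$ and the prior computation of $G_{n,n-1}$, $G_{n,n-2}$ via \eqref{eq:matricesgn}, \eqref{eq:matricesgp} --- matches the paper exactly.
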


Note that if we replace $G_{n,n}$ by any invertible matrix of size $(n+1)\times(n+1)$ we obtain a polynomial solution of \eqref{eqdiv-diff-biv}.

If we choose $G_{n,n}$ as the identity matrix in Theorem \ref{eq:matricesTTRR}, we obtain
\begin{corollary}\label{eq:matricesTTRRmonic}
The explicit expressions of the matrices $A_{n,j}$, $B_{n,j}$ and $C_{n,j}$ ($j=1,2$) in the three-term recurrence relations
\eqref{RRTT} in the case of monic polynomial solutions $\hat{{\mathbf{P}}}_{n}$ of the fourth-order linear partial divided-difference equation \eqref{eqdiv-diff-biv} can be expressed in terms of the coefficients of the equation as
\begin{align*}
A_{n,j}&=L_{n,j}, \quad n \geq 0,\\
B_{0,j}&=-L_{0,j} G_{1,0}, \qquad \qquad \qquad \,
B_{n,j}=G_{n,n-1}L_{n-1,i}-L_{n,j} G_{n+1,n}, \quad n \geq 1,\\
C_{1,j}&=-L_{1,j}G_{2,0}-B_{1,j}G_{1,0}, \qquad
C_{n,j}=G_{n,n-2}L_{n-2,j}-L_{n,j}G_{n+1,n-1}-B_{n,j}G_{n,n-1}, \quad n \geq 2,
\end{align*}
where the matrices $G_{n,n-1}$ and $G_{n,n-2}$ are given in \eqref{eq:matricesgn}.
\end{corollary}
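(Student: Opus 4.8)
The plan is to obtain the Corollary as a direct specialization of Theorem \ref{eq:matricesTTRR}, exploiting the remark placed immediately before the statement: replacing $G_{n,n}$ by any invertible matrix of size $(n+1)\times(n+1)$ yields a polynomial solution of \eqref{eqdiv-diff-biv}. First I would observe that the monic solution $\hat{\mathbf{P}}_n$ is precisely the one whose leading coefficient matrix in \eqref{EXPP} is the identity, that is $G_{n,n}=\mathbf{I}_{n+1}$ for every $n\geq 0$. Indeed, by \eqref{EXPP} the top-degree part of $\mathbf{P}_n$ is $G_{n,n}\textbf{x}^n$, so the choice $G_{n,n}=\mathbf{I}_{n+1}$ forces each component $P_{n-k,k}$ to have leading term $x(s)^{n-k}y(t)^k$ with unit coefficient and no other monomial of total degree $n$, which is exactly the monic normalization in the graded lexicographical order. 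Moreover, the first line of \eqref{eq:matricesgn} gives $G_{n,n}=G_{n,n}'$, so this choice is consistent with the expansion \eqref{EXPP1} in the basis $\textbf{F}_n$, and $G_{n,n}'=\mathbf{I}_{n+1}$ can be substituted into \eqref{eq:matricesgp}.

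With $G_{n,n}=\mathbf{I}_{n+1}$ in hand I would substitute directly into the five formulas of Theorem \ref{eq:matricesTTRR}. For $A_{n,j}$, using $G_{n,n}=\mathbf{I}_{n+1}$ together with $G_{n+1,n+1}^{-1}=\mathbf{I}_{n+2}$ gives $A_{n,j}=\mathbf{I}_{n+1}L_{n,j}\mathbf{I}_{n+2}=L_{n,j}$. The degenerate cases $n=0$ and $n=1$ use $G_{0,0}=\mathbf{I}_1=1$, so the factor $G_{0,0}^{-1}$ disappears from the expressions for $B_{0,j}$ and $C_{1,j}$; together with $A_{0,j}=L_{0,j}$ and $A_{1,j}=L_{1,j}$ this yields $B_{0,j}=-L_{0,j}G_{1,0}$ and $C_{1,j}=-L_{1,j}G_{2,0}-B_{1,j}G_{1,0}$.

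For the generic cases I would apply $G_{n,n}^{-1}=\mathbf{I}_{n+1}$ and $G_{n-1,n-1}^{-1}=\mathbf{I}_{n}$ to the formulas for $B_{n,j}$ ($n\geq 1$) and $C_{n,j}$ ($n\geq 2$). Since the right factor becomes the identity, these collapse to $B_{n,j}=G_{n,n-1}L_{n-1,i}-L_{n,j}G_{n+1,n}$ and $C_{n,j}=G_{n,n-2}L_{n-2,j}-L_{n,j}G_{n+1,n-1}-B_{n,j}G_{n,n-1}$, where $G_{n,n-1}$ and $G_{n,n-2}$ are still given by \eqref{eq:matricesgn}, now evaluated with $G_{n,n}'=\mathbf{I}_{n+1}$ in \eqref{eq:matricesgp}. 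This recovers precisely the asserted expressions.

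The only point requiring care — and it is minor — is to confirm that the identity matrix is an admissible choice, so that the resulting $\hat{\mathbf{P}}_n$ genuinely solve \eqref{eqdiv-diff-biv} and that all inverses appearing in \eqref{eq:matricesgp} and in Theorem \ref{eq:matricesTTRR} remain well defined. The former is guaranteed by the remark preceding the Corollary, since $\mathbf{I}_{n+1}$ is invertible; the latter holds because the matrices $\mathbf{Z}_{n-1}(\lambda_n)=(\lambda_n-\lambda_{n-1})\mathbf{I}_n$ and $\mathbf{Z}_{n-2}(\lambda_n)$ entering \eqref{eq:matricesgp} are nonsingular under the same nondegeneracy condition $\lambda_n\neq\lambda_{n-1},\lambda_{n-2}$ already used in Theorem \ref{eq:matricesTTRR}. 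Thus no new obstruction arises, and the Corollary follows by substitution.
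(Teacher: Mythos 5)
Your proposal is correct and coincides with the paper's own (implicit) argument: the corollary is obtained precisely by choosing $G_{n,n}$ to be the identity matrix in Theorem \ref{eq:matricesTTRR}, which simultaneously enforces the monic normalization in \eqref{EXPP} and collapses the factors $G_{n,n}^{-1}$, $G_{n-1,n-1}^{-1}$, $G_{n+1,n+1}^{-1}$ to identities. Your additional checks on the admissibility of this choice and the nonsingularity of ${\mathbf{Z}}_{n-1}(\lambda_n)$ and ${\mathbf{Z}}_{n-2}(\lambda_n)$ are consistent with what the paper already assumes.
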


Let us assume that the matrices in the three-term recurrence relations satisfy the rank conditions $\text{rank}\,A_{n,j}=\text{rank}\,C_{n+1,j}=n+1$ ($j=1,2$), for the joint matrix $A_{n}$ of $A_{n,1}$ and $A_{n,2}$, $A_{n}=(A_{n,1}^{\mathsf T},A_{n,2}^{\mathsf T})^{\mathsf T}$ we have $\text{rank}\,A_{n}=n+2$, and for the joint matrix $C_{n+1}$ of $C_{n+1,1}$ and $C_{n+1,2}$, $C_{n+1}^{\mathsf T}=(C_{n,1}^{\mathsf T},C_{n,2}^{\mathsf T})$, we have $\text{rank}\,C_{n+1}^{\mathsf T}=n+2$. Since for $n \geq 0$ there exist matrices $A_{n,j}$, $B_{n,j}$ and $C_{n,j}$ ($j=1,2$) such that the column vector of polynomials ${\mathbf{P}}_{n}$ satisfy the three-term recurrence relations \eqref{RRTT}, then applying \cite[Theorem 3.2.7]{MR1827871} we have that there exists a linear functional $\mathcal{L}$ which defines a quasi-definite linear functional and which makes $\{ {\mathbf{P}}_{n}\}_{n=0}^{\infty}$ an orthogonal basis in the space of bivariate polynomials.

\subsection{Explicit expressions of the leading matrices $G_{n,n}$ for bivariate Racah polynomials \eqref{eq:brp} and \eqref{eq:secondfamilyracah}}
In order to obtain the matrix $G_{n,n}$ for the particular case of both families of bivariate Racah polynomials $R_{n,m}(s,t;\beta_{0},\beta_{1},\beta_{2},\beta_{3},N)$  and $\bar{R}_{n,m}(s,t;\beta_{0},\beta_{1},\beta_{2},\beta_{3},N)$  we use the connection formula
\begin{equation*}
(-s)_{n}(s+\beta_{1})_{n}
=\sum_{j=0}^{n} \,\frac{(-1)^j\, 2^{2 j-2 n} \,(n-j+1)_j \,\left(\beta_{1}+j-\frac{1}{2}\right)_{2 n-2 j}}{j!} \,F_j(x(s)),
\end{equation*}
as well as their definitions in \eqref{eq:brp} and \eqref{eq:secondfamilyracah}, respectively to obtain the matrices of leading coefficients $G_{n,n}$ in \eqref{EXPP} as
\begin{equation}\label{eq:gnnbr1}
G_{n,n}=G_{n,n}(\beta_{0},\beta_{1},\beta_{2},\beta_{3},N)=
\begin{pmatrix}
g_{i,j}(n,\beta_{0},\beta_{1},\beta_{2},\beta_{3},N)
\end{pmatrix}_{0 \leq i,j \leq n},
\end{equation}
for bivariate Racah polynomials $R_{n,m}(s,t;\beta_{0},\beta_{1},\beta_{2},\beta_{3},N)$ where
\begin{multline*}
g_{i,j}(n,\beta_{0},\beta_{1},\beta_{2},\beta_{3},N)\\
=\begin{cases}
\displaystyle{0}, & i<j, \\[3mm]
\displaystyle{\frac{(-1)^{i+n} \,(\beta_{1}-\beta_{0})_{n-i} \,(2 n-i-\beta_{0}+\beta_{3}-1)_i \, (i-n)_{n-j} \,(n-i-\beta_{0}+\beta_{2}-1)_{n-j}}{(n-j)! \,(\beta_{1}-\beta_{0})_{n-j}}}, & i \geq j,
\end{cases}
\end{multline*}
and
\begin{equation}\label{eq:gnnbr2}
\bar{G}_{n,n}=\bar{G}_{n,n}(\beta_{0},\beta_{1},\beta_{2},\beta_{3},N)=
\begin{pmatrix}
\bar{g}_{i,j}(n,\beta_{0},\beta_{1},\beta_{2},\beta_{3},N)
\end{pmatrix}_{0 \leq i,j \leq n},
\end{equation}
for bivariate Racah polynomials $\bar{R}_{n,m}(s,t;\beta_{0},\beta_{1},\beta_{2},\beta_{3},N)$ where
\begin{multline*}
\bar{g}_{i,j}(n,\beta_{0},\beta_{1},\beta_{2},\beta_{3},N)\\=\begin{cases}
\displaystyle{0}, & i>j, \\[3mm]
\displaystyle{\binom{i}{j}\, \left(\beta _2-\beta _3-i+1\right)_{i-j}\, \left(i-\beta _1+\beta _3-1\right)_j \, \left(i+n-\beta _0+\beta_3-1\right)_{n-i}}, & i \leq j.
\end{cases}
\end{multline*}
By using these matrices $G_{n,n}(\beta_{0},\beta_{1},\beta_{2},\beta_{3},N)$ and $\bar{G}_{n,n}(\beta_{0},\beta_{1},\beta_{2},\beta_{3},N)$ it is possible to apply Theorem \ref{eq:matricesTTRR} in order to compute the families of bivariate Racah polynomials $R_{n,m}(s,t;\beta_{0},\beta_{1},\beta_{2},\beta_{3},N)$ defined in \eqref{eq:brp} and $\bar{R}_{n,m}(s,t;\beta_{0},\beta_{1},\beta_{2},\beta_{3},N)$ defined in \eqref{eq:secondfamilyracah} from the three-term recurrence relations they satisfy.

\subsection{Monic bivariate Racah polynomials and connection between bivariate Racah polynomials \eqref{eq:brp} and \eqref{eq:secondfamilyracah}}\label{section:mbr}

Let us assume that $G_{n,n}$ is the identity matrix of size $n+1$. Then, by using Corollary \ref{eq:matricesTTRRmonic} we introduce the family of monic polynomial solutions of the fourth-order linear partial divided-difference equation \eqref{eqdiv-diff-biv}. We would like to emphasize that in the case of monic polynomial solutions of \eqref{eqdiv-diff-biv}, for each $n$ and $m$, only one term of total degree $n+m$ appears, which moreover has leading coefficient equal to one.

Let
\[
{\mathbf{P}}_n=(R_{n-k,k}(s,t;\beta_{0},\beta_{1},\beta_{2},\beta_{3},N))_{k=0,\dots,n}, \qquad
{\mathbf{\bar{P}}}_n=(\bar{R}_{n-k,k}(s,t;\beta_{0},\beta_{1},\beta_{2},\beta_{3},N))_{k=0,\dots,n},
\]
and let ${\mathbf{\hat{P}}}_n$ be the column vector of monic bivariate Racah polynomials generated from Corollary \ref{eq:matricesTTRRmonic}. Then, we have
\[
{\mathbf{P}}_n= G_{n,n}(\beta_{0},\beta_{1},\beta_{2},\beta_{3},N)\,{\mathbf{\hat{P}}}_n, \qquad
{\mathbf{\bar{P}}}_n= \bar{G}_{n,n}(\beta_{0},\beta_{1},\beta_{2},\beta_{3},N)\,{\mathbf{\hat{P}}}_n.
\]
where $G_{n,n}(\beta_{0},\beta_{1},\beta_{2},\beta_{3},N)$ and $\bar{G}_{n,n}(\beta_{0},\beta_{1},\beta_{2},\beta_{3},N)$ are the matrices of size $n+1$ defined in \eqref{eq:gnnbr1} and \eqref{eq:gnnbr2}, respectively. As a consequence, we obtain the following connection formulae for $n \geq 0$
\begin{equation}\label{eq:connection}
\begin{cases}
{\mathbf{P}}_n=G_{n,n}(\beta_{0},\beta_{1},\beta_{2},\beta_{3},N) (\bar{G}_{n,n}(\beta_{0},\beta_{1},\beta_{2},\beta_{3},N))^{-1}\,{\mathbf{\bar{P}}}_n, \\[3mm]
{\mathbf{\bar{P}}}_n=\bar{G}_{n,n}(\beta_{0},\beta_{1},\beta_{2},\beta_{3},N) (G_{n,n}(\beta_{0},\beta_{1},\beta_{2},\beta_{3},N))^{-1}\,{\mathbf{P}}_n.
\end{cases}
\end{equation}

\section{Fourth-order linear partial divided-difference equation satisfied by the bivariate Wilson, bivariate continuous dual Hahn, and bivariate continuous Hahn polynomials}\label{sec:4}

In this section, we derive the fourth-order linear partial divided-difference equation of the bivariate Wilson polynomials from the bivariate Racah ones. Using limiting process, the partial divided-difference equation of the bivariate continuous dual Hahn and the bivariate continuous Hahn polynomials follow. The coefficients of the three-term recurrence relations satisfied by these families are also given, and the monic bivariate polynomial solutions of the equations are also introduced.

\subsection{Fourth-order linear partial divided-difference equation of the bivariate Wilson polynomials}

If we make the change of variables \cite[p. 443]{MR2784425}
\begin{equation}\label{eq:ractowil}
V_0:=\begin{cases}
&\beta_0=a-b,\quad \beta_1=2a,\quad \beta_2=2a+2e_2,\quad \beta_3=2a+2e_2+c+d, \\
&s=-a+ix, \quad t=-a-e_2+iy, \quad N=-a-d-e_2,
\end{cases}
\end{equation}
we observe that the bivariate Racah polynomials \eqref{eq:brp} transform into the bivariate Wilson polynomials (in a similar way as in the univariate case \cite[p. 196]{MR2656096})
\begin{multline}\label{eq:bwp}
W_{n,m}(x,y;a,b,c,d;e_2)=w_n(x^2;a,b,e_2+iy,e_2-iy)w_m(y^2;n+a+e_2,n+b+e_2,c,d),
\end{multline}
where from now on $x$ and $y$ are the variables and $w_n(x^2;a,b,c,d)$ are the Wilson polynomials defined by \cite[(9.1.1)]{MR2656096}
\begin{multline}\label{eq:wilson}
w_n(x^2;a,b,c,d)=(a+b)_{n}(a+c)_{n}(a+d)_{n}\hyper{4}{3}{-n,n+a+b+c+d-1,a+ix,a-ix}{a+b,a+c,a+d}{1}.
\end{multline}
Note that there is a misprint in \cite[p. 443]{MR2784425} on the change $x=-a+iy$ from the Racah to the Wilson polynomials, and on the change
$x_k=-\varepsilon_2^k-a+iy_k$ from the multivariate Racah to the multivariate Wilson polynomials.\\
The appropriate operators for the Wilson polynomials are the operator $\mathbf{S}_x$ and the Wilson operator $\mathbf{D}_x $ defined by \cite{IsmailStanton2012}
\[
\mathbf{D}_x f(x)=\frac{f\Big(x+\frac{i}{2}\Big)-f\Big(x-\frac{i}{2}\Big)}{2ix},\quad
\mathbf{S}_x f(x)=\frac{f\Big(x+\frac{i}{2}\Big)+f\Big(x-\frac{i}{2}\Big)}{2}.
\]
If we perform the changes \eqref{eq:ractowil}, we obtain by simple computations that
\begin{align*}
& \mathbb{D}_xR_{n,m}(s,t;\beta_{0},\beta_{1},\beta_{2},\beta_3,N)\Big \vert_{V_0}=  -\mathbf{D}_xW_{n,m}(x,y;a,b,c,d;e_2),\\
&\mathbb{D}_yR_{n,m}(s,t;\beta_{0},\beta_{1},\beta_{2},\beta_3,N)\Big \vert_{V_0}=  -\mathbf{D}_yW_{n,m}(x,y;a,b,c,d;e_2), \\
& \mathbb{S}_xR_{n,m}(s,t;\beta_{0},\beta_{1},\beta_{2},\beta_3,N)\Big \vert_{V_0}= \mathbf{S}_xW_{n,m}(x,y;a,b,c,d;e_2),\\
&\mathbb{S}_yR_{n,m}(s,t;\beta_{0},\beta_{1},\beta_{2},\beta_3,N)\Big \vert_{V_0}= \mathbf{S}_yW_{n,m}(x,y;a,b,c,d;e_2).
\end{align*}
It follows from the bivariate Racah divided-difference equation \eqref{eqdiv-diff-biv} and \eqref{eq:ractowil} that
\begin{proposition}\label{prop:divided_biv_Wilson}
The bivariate Wilson polynomials $W_{n,m}(x,y):=W_{n,m}(x,y;a,b,c,d;e_2)$ are solution of the fourth-order linear partial divided-difference equation
 \begin{multline}\label{eqdiv-diff-biv-wil}
f_1(x,y) \mathbf{D}^2_x\mathbf{D}^2_yW_{n,m}(x,y) + f_2(x,y)\mathbf{S}_x\mathbf{D}_x\mathbf{D}^2_yW_{n,m}(x,y)+f_3(x,y)\mathbf{S}_y\mathbf{D}_y\mathbf{D}^2_xW_{n,m}(x,y)\\
+f_4(x,y) \mathbf{S}_x\mathbf{D}_x\mathbf{S}_y\mathbf{D}_y W_{n,m}(x,y)
+ f_5(x)\mathbf{D}^2_x W_{n,m}(x,y)+f_6(y)\mathbf{D}^2_y W_{n,m}(x,y)+f_7(x) \mathbf{S}_x\mathbf{D}_xW_{n,m}(x,y) \\
+f_8(y)\mathbf{S}_y\mathbf{D}_y W_{n,m}(x,y)+(m+n)(2e_2+a+b+c+d+m+n-1) W_{n,m}(x,y)=0,
\end{multline}
where
\begin{align*}
f_8(y)&=\left( -a-b-2 e_2-c-d \right) y^{2}+\left( c+d \right) e_2^{2}+ \left( ad+ca+db+bc+2\,dc \right) e_2\\
&+adc+dba+bac+dbc, \\
f_7(x)&=\left( -a-b-2\,e_2-c-d \right)x^2+\left( a+b \right)e_2^{2}+ \left( bc+db+ad+2\,ba+ca \right)e_2\\
&+bac+dbc+adc+dba,  \\
f_6(y)&=-y^{4}+ \left( be_2+ba+2\,ce_2+ca+ae_2+e_2^{2}+bc+dc+db +2e_2d+ad \right) y^{2}\\
&-dc \left( e_2+b \right) \left( e_2+a \right),  \\
f_5(x)&=-x^{4}+ \left( e_2^{2}+2\,ae_2+e_2\,d+ad+2\,be_2+ba+bc+ce_2+dc +db+ca \right) x^{2}\\
&-ba \left(e_2+d \right) \left( e_2+c \right),   \\
f_4(x,y)&=-2x^2y^2+ \left( d+c+2\,ce_2+ca+bc+2\,dc+
db+2\,e_2\,d+ad \right) x^{2}\\
&+ \Big( 2\,ae_2+ca+ad +a+2\,be_2 +2\,ba+bc+db+b \Big) y^{2} \\
&- \left( c+d \right) \left( a+b \right) e_2^{2}+ \left( -2\,db a-2\,adc-ad-2\,dbc-ca-db \right. \\
& \left. -bc-2\,bac \right) e_2-2\,dbac-adc-dba-bac-dbc,   
\end{align*}
\begin{align*}
f_3(x,y)&=(c+d)x^4-ba \left( 2\,e_2+d+c+1 \right)y^2+ (1+2\,a+2\,e_2+c+d+2\,b)x^2y^2\\
&+ba \Big( \left( c+d \right)e_2^{2}+ \left( d+c+2\,dc \right) e_2+dc \Big) \\
&+\Big( \left( -c-d \right) e_2^{2}+ \left( -2\,ad-2\,bc-2\,ca-2\,db-c-d-2\,dc \right) e_2\\
&-db-ca-bc-2\,adc-dc-dba-bac-ad-2\,dbc\Big)x^2, \\
f_2(x,y)&=(a+b)y^4-dc \left( 1+a+b+2\,e_2 \right)x^2+(a+b+2\,e_2+2\,c+2\,d+1)x^2y^2\\
&+dc \Big( \left( a+b \right) e_2^{2}+ \left( 2\,ba+a+b \right) e_2+ba \Big)\\
&+\Big(\left( -a-b \right) e_2^{2}+ \left( -a-2\,ba-2\,ad-2\,bc-b-2\,ca-2\,db \right) e_2\\
&-dbc-ba-ca-ad-2\,dba-bc-adc-2\,bac-db \Big)y^2, \\
f_1(x,y)&=x^4y^2+x^2y^4-cdx^4-aby^4+\Big( \left( -2\,c-2\,b-2\,d-1-2\,a \right) e_2\\
&-e_2^{2}-a-b-d-c -dc-ba-2\,ca -2\,db-2\,bc-2\,ad\Big)x^2y^2\\
&+dc \left( e_2^{2}+ \left( 2\,b+2\,a+1 \right) e_2+b+ba+a \right)x^2\\
&+ba \left( e_2^{2}+ \left( 2\,c+2\,d+1 \right) e_2+c+d+dc \right)y^2-adbe_2\,c \left( 1+e_2 \right).
\end{align*}
\end{proposition}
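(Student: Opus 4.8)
The plan is to derive equation \eqref{eqdiv-diff-biv-wil} directly from the bivariate Racah divided-difference equation \eqref{eqdiv-diff-biv} by performing the change of variables $V_0$ of \eqref{eq:ractowil}, which sends the bivariate Racah polynomials to the bivariate Wilson polynomials \eqref{eq:bwp}. The key point is that the four relations displayed just before the statement are not special to $R_{n,m}$: they hold as operator identities. Indeed, under $V_0$ one has $\beta_1=2a$ and $s=-a+ix$, so that $s\pm\tfrac12$ corresponds to $x\mp\tfrac{i}{2}$, and a short computation gives $x(s+\tfrac12)-x(s-\tfrac12)=2ix$. Hence for any $g$,
\[
\mathbb{D}_x g\big|_{V_0}=\frac{g(x-\tfrac{i}{2})-g(x+\tfrac{i}{2})}{2ix}=-\mathbf{D}_x\big(g\big|_{V_0}\big),\qquad \mathbb{S}_x g\big|_{V_0}=\mathbf{S}_x\big(g\big|_{V_0}\big),
\]
and the analogous identities hold in the $t/y$ direction with $\beta_2=2a+2e_2$, $t=-a-e_2+iy$, giving $y(t+\tfrac12)-y(t-\tfrac12)=2iy$, so that $\mathbb{D}_y\big|_{V_0}=-\mathbf{D}_y$ and $\mathbb{S}_y\big|_{V_0}=\mathbf{S}_y$. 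I would also record $x(s)\big|_{V_0}=-x^2-a^2$ and $y(t)\big|_{V_0}=-y^2-(a+e_2)^2$, which are polynomials in $x^2$ and $y^2$ respectively.

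First I would apply these operator identities termwise to \eqref{eqdiv-diff-biv}. Since $\mathbb{S}_x,\mathbb{S}_y$ are invariant while each isolated factor $\mathbb{D}_x$ or $\mathbb{D}_y$ produces a minus sign, each of the eight composite operators acquires a sign $\sigma_i$, equal to $+1$ when the total number of $\mathbb{D}$-factors is even and $-1$ when it is odd. Reading off the terms attached to $f_1,\dots,f_8$ this gives $(\sigma_1,\dots,\sigma_8)=(+,-,-,+,+,+,-,-)$. Correspondingly, the restricted coefficient $f_i(x(s),y(t))\big|_{V_0}$ is a polynomial in $x^2,y^2$, and applying $V_0$ turns \eqref{eqdiv-diff-biv} into an equation in the operators $\mathbf{D}_x,\mathbf{D}_y,\mathbf{S}_x,\mathbf{S}_y$ whose coefficients are $\sigma_i\,f_i(x(s),y(t))\big|_{V_0}$. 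The zeroth-order term transforms trivially, and its eigenvalue matches: under $V_0$ one has $\beta_3-\beta_0=a+b+2e_2+c+d$, whence $(m+n)(\beta_3-\beta_0+m+n-1)\big|_{V_0}=(m+n)(2e_2+a+b+c+d+m+n-1)$, exactly as in \eqref{eqdiv-diff-biv-wil}.

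It then remains to check that $\sigma_i\,f_i(x(s),y(t))\big|_{V_0}$ coincides with the explicit polynomial $f_i$ listed in the statement, for $i=1,\dots,8$. This is a direct substitution using $x(s)\big|_{V_0}=-x^2-a^2$, $y(t)\big|_{V_0}=-y^2-(a+e_2)^2$, together with $\beta_0-\beta_1=-a-b$, $\beta_0-\beta_2=-a-b-2e_2$, $\beta_0-\beta_3=-a-b-2e_2-c-d$, $\beta_3+N=a+e_2+c$ and $N=-a-d-e_2$. As a sample check, the coefficient of $y^2$ in $\sigma_8\,f_8(y(t))\big|_{V_0}=-f_8(y(t))\big|_{V_0}$ equals $(\beta_0-\beta_3)\big|_{V_0}=-a-b-2e_2-c-d$, agreeing with the displayed $f_8(y)$; and the leading monomials of $f_5,f_6,f_7$ together with the $x^4y^2+x^2y^4$ part of $f_1$ all come out correctly by the same mechanism.

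The main obstacle is purely computational: confirming the full set of polynomial identities $f_i=\sigma_i\,f_i(x(s),y(t))\big|_{V_0}$, since $f_1,\dots,f_4$ are dense polynomials of degree up to three in the lattice variables and the substitution mixes the five Wilson parameters $a,b,c,d,e_2$. No new idea is needed beyond careful algebra (conveniently carried out with a computer algebra system); the conceptual content is entirely contained in the operator sign rules $\mathbb{D}_x\big|_{V_0}=-\mathbf{D}_x$, $\mathbb{D}_y\big|_{V_0}=-\mathbf{D}_y$, $\mathbb{S}_x\big|_{V_0}=\mathbf{S}_x$, $\mathbb{S}_y\big|_{V_0}=\mathbf{S}_y$ together with the invariance of the equation under the reparametrization $V_0$.
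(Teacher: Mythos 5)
Your proposal is correct and follows essentially the same route as the paper: the paper also obtains \eqref{eqdiv-diff-biv-wil} by applying the substitution $V_0$ of \eqref{eq:ractowil} to the Racah equation \eqref{eqdiv-diff-biv}, using exactly the operator correspondences $\mathbb{D}_x\vert_{V_0}=-\mathbf{D}_x$, $\mathbb{S}_x\vert_{V_0}=\mathbf{S}_x$ (and their $y$-analogues) displayed just before the statement. Your sign bookkeeping $(\sigma_1,\dots,\sigma_8)=(+,-,-,+,+,+,-,-)$ and the substitutions $x(s)\vert_{V_0}=-x^2-a^2$, $y(t)\vert_{V_0}=-y^2-(a+e_2)^2$, $\beta_3-\beta_0\vert_{V_0}=a+b+c+d+2e_2$ are all consistent with the paper's computation.
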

It has been shown \cite{njionou_et_al_2015} that the operators $\mathbf{D}_x$ and $\mathbf{S}_x$ satisfy the  following properties
\begin{gather*}
\mathbf{D}_x(fg)=\mathbf{D}_xf\mathbf{S}_xg+\mathbf{S}_xf\mathbf{D}_xg, \quad \,\,\,
\mathbf{S}_x(fg)=-x^2\mathbf{D}_xf\mathbf{D}_xg+\mathbf{S}_xf\mathbf{S}_xg, \\
\mathbf{D}_x\mathbf{S}_x=\mathbf{S}_x\mathbf{D}_x-\frac{1}{2}\mathbf{D}_x^2,  \qquad
\mathbf{S}_x^2=-x^2\mathbf{D}_x^2-\frac{1}{2}\mathbf{S}_x\mathbf{D}_x+\mathbf{I},
\end{gather*}
where $\mathbf{I}f=f$.

Applying the operators $\mathbf{D}_x$ and $\mathbf{D}_y$ on the divided-difference equation \eqref{eqdiv-diff-biv-wil} and using the above properties it yields:
\begin{proposition}\label{proposition:12}
The polynomial $W^{(1,0)}_{n,m}(x,y):=\mathbf{D}_xW_{n,m}(x,y)$ is solution of the following fourth-order linear partial divided-difference equation
\begin{multline*}
f_{11}(x,y) \mathbf{D}^2_x\mathbf{D}^2_y W^{(1,0)}_{n,m}(x,y)
+f_{21}(x,y)\mathbf{S}_x\mathbf{D}_x\mathbf{D}^2_y W^{(1,0)}_{n,m}(x,y)
+f_{31}(x,y)\mathbf{S}_y\mathbf{D}_y\mathbf{D}^2_x W^{(1,0)}_{n,m}(x,y)\\+f_{41}(x,y)
\mathbf{S}_x\mathbf{D}_x\mathbf{S}_y\mathbf{D}_y W^{(1,0)}_{n,m}(x,y)
+ f_{51}(x)\mathbf{D}^2_x W^{(1,0)}_{n,m}(x,y)+f_{61}(y)\mathbf{D}^2_y
R^{(1,0)}(x,y)\\
+f_{71}(x) \mathbf{S}_x\mathbf{D}_x W^{(1,0)}_{n,m}(x,y) +f_{81}(y)\mathbf{S}_y\mathbf{D}_y W^{(1,0)}_{n,m}(x,y)
+(m+n-1) (a+b+c+d+2e_2+m+n) W^{(1,0)}_{n,m}(x,y)=0,
\end{multline*}
where the coefficients $f_{i1},\ i=1,\ldots,8$ are given by
\begin{align*}
f_{81}(y)&=f_8(y)+\mathbf{D}_x(f_4(x,y)),\\
f_{71}(x)&=\mathbf{S}_x(f_7(x))-\frac{1}{2}\mathbf{D}_x(f_7(x))+\mathbf{D}_x(f_5(x)), \\
f_{61}(y)&=f_6(y)+\mathbf{D}_x(f_2(x,y)),\\
f_{51}(x)&=\mathbf{S}_x(f_5(x))-x^2 \mathbf{D}_x(f_7(x))-\frac{1}{2}\mathbf{S}_x(f_7(x)), \\
f_{41}(x,y)&=-\frac{1}{2}\mathbf{D}_x(f_4(x,y))+\mathbf{D}_x(f_3(x,y))+ \mathbf{S}_x(f_4(x,y)), 
\end{align*}
\begin{align*}
f_{31}(x,y)&=-\frac{1}{2}\mathbf{S}_x(f_4(x,y))+\mathbf{S}_x(f_3(x,y))-x^2 \mathbf{D}_x(f_4(x,y)),  \\
f_{21}(x,y)&=-\frac{1}{2}\mathbf{D}_x(f_2(x,y))+\mathbf{D}_x(f_1(x,y))+ \mathbf{S}_x(f_2(x,y)),\\
f_{11}(x,y)&=-\frac{1}{2}\mathbf{S}_x(f_2(x,y))+\mathbf{S}_x(f_1(x,y))-x^2\mathbf{D}_x(f_2(x,y))),
\end{align*}
and the polynomial $W^{(0,1)}_{n,m}(x,y):=\mathbf{D}_yW_{n,m}(x,y)$ is solution of the following fourth-order linear partial divided-difference equation
\begin{multline*}
f_{12}(x,y) \mathbf{D}^2_x\mathbf{D}^2_y W^{(0,1)}_{n,m}(x,y)
+ f_{22}(x,y)\mathbf{S}_x\mathbf{D}_x\mathbf{D}^2_y W^{(0,1)}_{n,m}(x,y)
+f_{32}(x,y)\mathbf{S}_y\mathbf{D}_y\mathbf{D}^2_x
W^{(0,1)}_{n,m}(x,y)\\
+f_{42}(x,y)
\mathbf{S}_x\mathbf{D}_x\mathbf{S}_y\mathbf{D}_y W^{(0,1)}_{n,m}(x,y)
+ f_{52}(x)\mathbf{D}^2_x W^{(0,1)}_{n,m}(x,y)+f_{62}(y)\mathbf{D}^2_y W^{(0,1)}_{n,m}(x,y)\\
+f_{72}(x) \mathbf{S}_x\mathbf{D}_x W^{(0,1)}_{n,m}(x,y) +f_{82}(y)\mathbf{S}_y\mathbf{D}_y W^{(0,1)}_{n,m}(x,y)
+(m+n-1) (a+b+c+d+2e_2+m+n) W^{(0,1)}_{n,m}(x,y)=0,
\end{multline*}
where the coefficients $f_{i2},\ i=1,\ldots,8$ are given by
\begin{align*}
f_{82}(y)&= \mathbf{S}_y(f_8(y))-\frac{1}{2}\mathbf{D}_y(f_8(y))+\mathbf{D}_y(f_6(y)),\\
f_{72}(x)&=f_7(x)+\mathbf{D}_y(f_4(x,y)),\\
f_{62}(y)&= -y^2\mathbf{D}_y(f_8(y))-\frac{1}{2}\mathbf{S}_y(f_8(y))+\mathbf{S}_y(f_6(y)),\\
f_{52}(x)&=f_5(x)+\mathbf{D}_y(f_3(x,y)),\\
f_{42}(x,y)&=-\frac{1}{2}\mathbf{D}_y(f_4(x,y))+\mathbf{D}_y(f_2(x,y))+ \mathbf{S}_y(f_4(x,y)),\\
f_{32}(x,y)&=-\frac{1}{2}\mathbf{D}_y(f_3(x,y))+\mathbf{D}_y(f_1(x,y))+ \mathbf{S}_y(f_3(x,y)), \\
f_{22}(x,y)&=-\frac{1}{2}\mathbf{S}_y(f_4(x,y))+\mathbf{S}_y(f_2(x,y))-y^2 \mathbf{D}_y(f_4(x,y)),\\
f_{12}(x,y)&=-\frac{1}{2}\mathbf{S}_y(f_3(x,y))+\mathbf{S}_y(f_1(x,y))-y^2\mathbf{D}_y(f_3(x,y)).
\end{align*}
\end{proposition}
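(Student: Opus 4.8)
The plan is to imitate the proof of Theorem~\ref{theorem:2} in the Wilson setting: apply the Wilson divided-difference operator $\mathbf{D}_x$ to the whole equation \eqref{eqdiv-diff-biv-wil} satisfied by $W_{n,m}$, and then reorganize the outcome into an equation in the single unknown $W^{(1,0)}_{n,m}=\mathbf{D}_xW_{n,m}$. The essential tools are the four operator identities for $\mathbf{D}_x$ and $\mathbf{S}_x$ recalled just before the statement: the product rule $\mathbf{D}_x(fg)=\mathbf{D}_xf\,\mathbf{S}_xg+\mathbf{S}_xf\,\mathbf{D}_xg$, together with the normalizations $\mathbf{D}_x\mathbf{S}_x=\mathbf{S}_x\mathbf{D}_x-\tfrac12\mathbf{D}_x^2$ and $\mathbf{S}_x^2=-x^2\mathbf{D}_x^2-\tfrac12\mathbf{S}_x\mathbf{D}_x+\mathbf{I}$. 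These play exactly the role that \eqref{eq:dxsx} played in the Racah case, with the signs altered.

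First I would apply $\mathbf{D}_x$ term by term. Each summand of \eqref{eqdiv-diff-biv-wil} has the form $f_i\,\Theta_iW_{n,m}$, with $\Theta_i$ a product of $x$- and $y$-operators; the product rule turns it into $\mathbf{D}_xf_i\,\mathbf{S}_x(\Theta_iW_{n,m})+\mathbf{S}_xf_i\,\mathbf{D}_x(\Theta_iW_{n,m})$. Since the $x$-operators commute with the $y$-operators, the new $\mathbf{D}_x$ can be pushed to the right onto $W_{n,m}$ to produce $W^{(1,0)}_{n,m}$, after which the surviving strings in $\mathbf{S}_x,\mathbf{D}_x$ are reduced to the canonical forms $\mathbf{I}$, $\mathbf{S}_x\mathbf{D}_x$, $\mathbf{D}_x^2$ by the two normalization identities. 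A useful simplification is that the coefficients depending on $y$ alone ($f_6,f_8$) are annihilated by $\mathbf{D}_x$ and fixed by $\mathbf{S}_x$, while those depending on $x$ alone ($f_5,f_7$) behave as in the univariate case; this kills most cross terms.

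Next I would collect, for each of the nine operator monomials acting on $W^{(1,0)}_{n,m}$, the total coefficient produced, and match it to the claimed $f_{i1}$. For instance, the coefficient of $\mathbf{S}_y\mathbf{D}_yW^{(1,0)}_{n,m}$ receives $f_8$ from the $f_8$-term (via $\mathbf{S}_xf_8=f_8$) and $\mathbf{D}_xf_4$ from the $\mathbf{I}$-piece of $\mathbf{S}_x^2$ inside the $f_4$-term, giving $f_{81}=f_8+\mathbf{D}_x(f_4)$; similarly the coefficient of $\mathbf{S}_x\mathbf{D}_xW^{(1,0)}_{n,m}$ collects $\mathbf{D}_xf_5$ from $f_5\mathbf{D}_x^2W_{n,m}$, together with $-\tfrac12\mathbf{D}_xf_7$ and $\mathbf{S}_xf_7$ from the two pieces of $f_7\mathbf{S}_x\mathbf{D}_xW_{n,m}$, reproducing $f_{71}=\mathbf{S}_x(f_7)-\tfrac12\mathbf{D}_x(f_7)+\mathbf{D}_x(f_5)$. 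For the zeroth-order term I would note that $\mathbf{D}_x$ applied to the constant $\lambda_{n,m}=(m+n)(a+b+c+d+2e_2+m+n-1)$ times $W_{n,m}$ simply reproduces $\lambda_{n,m}W^{(1,0)}_{n,m}$, while the only extra identity contribution comes again from the $\mathbf{I}$-piece of $\mathbf{S}_x^2$ in the $f_7$-term, shifting the eigenvalue by $\mathbf{D}_xf_7=-(a+b+c+d+2e_2)$; a short computation then gives $\lambda_{n,m}+\mathbf{D}_xf_7=(m+n-1)(a+b+c+d+2e_2+m+n)$, matching the stated eigenvalue. The equation for $W^{(0,1)}_{n,m}=\mathbf{D}_yW_{n,m}$ follows identically upon applying $\mathbf{D}_y$ and the corresponding $y$-identities.

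The main obstacle is purely the combinatorial bookkeeping: each of the nine summands splits into two under the product rule, and the ensuing use of $\mathbf{S}_x^2$ and $\mathbf{D}_x\mathbf{S}_x$ scatters these into several operator monomials, so the care lies in tracking which contributions land in which of the nine coefficients $f_{i1}$, carrying the correct signs from the Wilson identities (note the $-x^2$ and $-\tfrac12$ factors, which differ from \eqref{eq:dxsx}). There is no conceptual difficulty beyond this; the structure is identical to Theorem~\ref{theorem:2}, and one could in principle transfer that result through the substitution $V_0$ of \eqref{eq:ractowil}, but since $\mathbb{D}_xR_{n,m}|_{V_0}=-\mathbf{D}_xW_{n,m}$ the sign changes make the direct computation above the cleaner route.
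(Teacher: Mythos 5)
Your proposal is correct and follows exactly the paper's route: the authors likewise obtain the result by applying $\mathbf{D}_x$ (resp.\ $\mathbf{D}_y$) to \eqref{eqdiv-diff-biv-wil} and reducing with the product rule and the identities $\mathbf{D}_x\mathbf{S}_x=\mathbf{S}_x\mathbf{D}_x-\tfrac12\mathbf{D}_x^2$, $\mathbf{S}_x^2=-x^2\mathbf{D}_x^2-\tfrac12\mathbf{S}_x\mathbf{D}_x+\mathbf{I}$, mirroring the proof of Theorem~\ref{theorem:2}. Your bookkeeping of the individual contributions (including the eigenvalue shift by $\mathbf{D}_xf_7=-(a+b+c+d+2e_2)$) checks out against the stated coefficients $f_{i1}$ and $f_{i2}$.
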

We can therefore deduce that
\begin{corollary}
The polynomial $W^{(1,1)}_{n,m}(x,y):=\mathbf{D}_x\mathbf{D}_yW_{n,m}(x,y)$ is solution of the following fourth-order linear partial divided-difference
equation
\begin{multline*}
f_{13}(x,y) \mathbf{D}^2_x\mathbf{D}^2_y W^{(1,1)}_{n,m}(x,y)
+f_{23}(x,y)\mathbf{S}_x\mathbf{D}_x\mathbf{D}^2_y W^{(1,1)}_{n,m}(x,y)
+f_{33}(x,y)\mathbf{S}_y\mathbf{D}_y\mathbf{D}^2_x W^{(1,1)}_{n,m}(x,y)\\+f_{43}(x,y)
\mathbf{S}_x\mathbf{D}_x\mathbf{S}_y\mathbf{D}_y W^{(1,1)}_{n,m}(x,y)
+ f_{53}(x)\mathbf{D}^2_x W^{(1,1)}_{n,m}(x,y)+f_{63}(y)\mathbf{D}^2_y
W^{(1,1)}_{n,m}(x,y)+f_{73}(x) \mathbf{S}_x\mathbf{D}_x W^{(1,1)}_{n,m}(x,y)\\
+f_{83}(y)\mathbf{S}_y\mathbf{D}_y W^{(1,1)}_{n,m}(x,y)
+\left( m+n-2 \right) \left( m+a+b+c+d+2\,e_{{2}}+n+1 \right) W^{(1,1)}_{n,m}(x,y)=0,
\end{multline*}
where the coefficients $f_{i3},\ i=1,\ldots,8$ are given by
\begin{align*}
f_{83}(y)&=f_{82}(y)+\mathbf{D}_x(f_{42}(x,y)),\\
f_{73}(x)&=\mathbf{S}_x(f_{72}(x))-\frac{1}{2}\mathbf{D}_x(f_{72}(x))+\mathbf{D}_x(f_{52}(x)),
\end{align*}
\begin{align*}
f_{63}(y)&=f_{63}(y)+\mathbf{D}_x(f_{22}(x,y)), \\
f_{53}(x)&=\mathbf{S}_x(f_{52}(x))-x^2 \mathbf{D}_x(f_{72}(x))-\frac{1}{2}\mathbf{S}_x(f_{72}(x)),\\
f_{43}(x,y)&=-\frac{1}{2}\mathbf{D}_x(f_{42}(x,y))+\mathbf{D}_x(f_{32}(x,y))+ \mathbf{S}_x(f_{42}(x,y)), \\
f_{33}(x,y)&=-\frac{1}{2}\mathbf{S}_x(f_{42}(x,y))+\mathbf{S}_x(f_{32}(x,y))-x^2 \mathbf{D}_x(f_{42}(x,y)), \\
f_{23}(x,y)&=-\frac{1}{2}\mathbf{D}_x(f_{22}(x,y))+\mathbf{D}_x(f_{12}(x,y))+ \mathbf{S}_x(f_{22}(x,y)),\\
f_{13}(x,y)&=-\frac{1}{2}\mathbf{S}_x(f_{22}(x,y))+\mathbf{S}_x(f_{12}(x,y))-x^2\mathbf{D}_x(f_{22}(x,y))).
\end{align*}
\end{corollary}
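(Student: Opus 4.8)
The plan is to exploit the factorization $W^{(1,1)}_{n,m}(x,y)=\mathbf{D}_x W^{(0,1)}_{n,m}(x,y)$ and reduce the assertion to a single application of $\mathbf{D}_x$ to an equation already in hand. By Proposition~\ref{proposition:12}, the polynomial $W^{(0,1)}_{n,m}(x,y)=\mathbf{D}_y W_{n,m}(x,y)$ solves a fourth-order equation of the stated type whose coefficients are the $f_{i2}$ and whose zeroth-order term carries the eigenvalue $(m+n-1)(a+b+c+d+2e_2+m+n)$. First I would apply $\mathbf{D}_x$ to that equation, so that its left-hand side becomes $\mathbf{D}_x$ of a sum of products $f_{i2}\cdot(\text{operator})\,W^{(0,1)}_{n,m}$, and I would then rewrite each summand as an explicit operator acting on $\mathbf{D}_x W^{(0,1)}_{n,m}=W^{(1,1)}_{n,m}$.

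To carry this out I would distribute $\mathbf{D}_x$ across every product by the Leibniz rule $\mathbf{D}_x(fg)=\mathbf{D}_xf\,\mathbf{S}_xg+\mathbf{S}_xf\,\mathbf{D}_xg$, and then push the two surviving $x$-operators inward with the identities $\mathbf{D}_x\mathbf{S}_x=\mathbf{S}_x\mathbf{D}_x-\tfrac12\mathbf{D}_x^2$ and $\mathbf{S}_x^2=-x^2\mathbf{D}_x^2-\tfrac12\mathbf{S}_x\mathbf{D}_x+\mathbf{I}$, using throughout that every $x$-operator commutes with every $y$-operator. Collecting the coefficient of each of the eight basis operators then yields the $f_{i3}$. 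The decisive point is that this whole procedure is nothing but the linear map that already sent the tuple $(f_i)$ to $(f_{i1})$ in Proposition~\ref{proposition:12}; applying the very same map to $(f_{i2})$ reproduces, term by term, the formulas claimed for $(f_{i3})$.

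The step needing real care---and the main obstacle---is the bookkeeping for the terms in which $\mathbf{S}_x$ lands on an operator already containing $\mathbf{S}_x\mathbf{D}_x$: here the substitution $\mathbf{S}_x^2=-x^2\mathbf{D}_x^2-\tfrac12\mathbf{S}_x\mathbf{D}_x+\mathbf{I}$ is what generates the $-x^2$ weights visible in $f_{53}$, $f_{33}$ and $f_{13}$, and its $+\mathbf{I}$ part feeds a constant into the identity term. Tracking that constant shows that only the summand $f_{72}\,\mathbf{S}_x\mathbf{D}_x W^{(0,1)}_{n,m}$ contributes to the zeroth-order coefficient, which therefore acquires the extra term $\mathbf{D}_x(f_{72})$. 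Since $\mathbf{D}_x(x^2)=1$ and $\mathbf{D}_x\mathbf{D}_y(f_4)=-2$, one finds $\mathbf{D}_x(f_{72})=\mathbf{D}_x(f_7)+\mathbf{D}_x\mathbf{D}_y(f_4)=-(a+b+c+d+2e_2)-2$, exactly the shift that turns $(m+n-1)(a+b+c+d+2e_2+m+n)$ into $(m+n-2)(a+b+c+d+2e_2+m+n+1)$. Once these coefficient identifications and the eigenvalue check are assembled, the corollary follows.
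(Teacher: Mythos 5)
Your proposal is correct and follows essentially the same route as the paper, which likewise obtains this corollary by applying $\mathbf{D}_x$ to the equation for $W^{(0,1)}_{n,m}$ from Proposition~\ref{proposition:12} and reusing the operator identities, so that the map $(f_{i2})\mapsto(f_{i3})$ is the same one that produced $(f_{i1})$ from $(f_i)$. Your explicit verification that the only identity-operator contribution comes from the $f_{72}\,\mathbf{S}_x\mathbf{D}_x$ term, giving the eigenvalue shift $\mathbf{D}_x(f_{72})=-(a+b+c+d+2e_2)-2$, is a correct and welcome detail the paper leaves implicit.
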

\begin{remark}
\begin{enumerate}
\item It should be noted that as in the univariate Wilson case \cite[(9.1.7)]{MR2656096}
\begin{multline*}
\mathbf{D}_xw_n(x^2;a,b,c,d)=-n(n+a+b+c+d-1)w_{n-1}\left(x^2;a+\frac{1}{2},b+\frac{1}{2},c+\frac{1}{2},d+\frac{1}{2}\right),
\end{multline*}
the similar relation in the bivariate case is given by
\begin{equation*}
\mathbf{D}_xW_{n,m}(x,y;a,b,c,d;e_2)=-n(n+a+b+2e_2-1)W_{n-1,m}\left(x,y;a+\frac{1}{2},b+\frac{1}{2},c,d;e_2+\frac{1}{2}\right).
\end{equation*}
\item We would like also to emphasize that
\[f_{i1}=f_i\left(a+\frac{1}{2},b+\frac{1}{2},c,d,e_2+\frac{1}{2}\right),\quad i=1,\ldots,8,\] and
\[f_{i2}=f_i\left(a,b,c+\frac{1}{2},d+\frac{1}{2},e_2+\frac{1}{2}\right),\quad i=1,\ldots,8,\]
where $f_i=f_i(a,b,c,d,e_2),\, i=1,\ldots,8$ are given in Proposition \ref{prop:divided_biv_Wilson}, and $f_{i1}$ and $f_{i2}$ are given in Proposition \ref{proposition:12}.
\item If we consider the second family of bivariate Wilson polynomials \cite[Equation (2.13)]{MR1123596}
\begin{equation}\label{eq:Wilsonbar}
\bar{W}_{n,m}(x,y;a,b,c,d;e_2)=w_n(x^2;m+c+e_2,m+d+e_2,a,b)w_m(y^2;c,d,e_2+ix,e_2-ix),
\end{equation}
it follows that
\begin{equation*}
\mathbf{D}_y\bar{W}_{n,m}(x,y;a,b,c,d;e_2)=-m(m+c+d+2e_2-1) \bar{W}_{n,m-1}\left(x,y;a,b,c+\frac{1}{2},d+\frac{1}{2};e_2+\frac{1}{2}\right).
\end{equation*}

\item We also note that $W_{n,m}(x,y;a,b,c,d;e_2)$ and $\bar{W}_{n,m}(x,y;a,b,c,d;e_2)$ are solution of the same fourth-order divided-difference equation \eqref{eqdiv-diff-biv-wil}.
\end{enumerate}
\end{remark}
Using the change of variables \eqref{eq:ractowil}, we also show that
\begin{proposition}
Both families of bivariate Wilson polynomials~$W_{n,m}(x,y):=W_{n,m}(x,y;a,b,c,d;e_2)$ and~$\bar{W}_{n,m}(x,y):=\bar{W}_{n,m}(x,y;a,b,c,d;e_2)$~are respectively solution of the second-order divided-difference equations
\begin{multline*}
\Big({x}^{4}-{x}^{2}{y}^{2}+ \left( -2\,a{ e_2}-ba-2\,b{e_2}-{{e_2}}^{2} \right) {x}^{2}+ab{y}^{2}+abe_{2}^{2}\Big)\mathbf{D}_x^2W_{n,m}(x,y)+\Big(\left( a+2\,{e_2}+b \right) {x}^{2}\\
- \left( a+b \right) {y}^{2}-2\,ba{e_2}-be_{2}^{2}-ae_{2}^{2} \Big)\mathbf{S}_x\mathbf{D}_xW_{n,m}(x,y)-n(n-1+a+b+2e_2)W_{n,m}(x,y)=0,
\end{multline*}
and
\begin{multline*}
\Big(-{x}^{2}{y}^{2}+{y}^{4}+c{x}^{2}d+ \left( -2\,c{e_2}-dc-2\,d{e_2}-e_{2}^{2} \right) {y}^{2}+cde_{2}^{2}\Big)\mathbf{D}_y^2\bar{W}_{n,m}(x,y)+\Big( \left( -c-d \right) {x}^{2}\\
+ \left( c+2\,{e_2}+d \right) {y}^{2}-d e_{2}^{2}-2\,dc{e_2}-ce_{2}^{2}  \Big) \mathbf{S}_y \mathbf{D}_y\bar{W}_{n,m}(x,y)-m(m-1+c+d+2e_2)\bar{W}_{n,m}(x,y)=0.
\end{multline*}
\end{proposition}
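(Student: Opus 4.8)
The plan is to derive each of the two equations by transporting a \emph{second-order} divided-difference equation for the appropriate family of bivariate Racah polynomials through the change of variables \eqref{eq:ractowil}. First I would record how $V_0$ acts on the building blocks. A direct computation gives $x(s)=-(x^2+a^2)$ and $y(t)=-(y^2+(a+e_2)^2)$, while the shifts $s\mapsto s\pm\tfrac12$ and $t\mapsto t\pm\tfrac12$ become $x\mapsto x\mp\tfrac{i}{2}$ and $y\mapsto y\mp\tfrac{i}{2}$; in particular $x(s+\tfrac12)-x(s-\tfrac12)=2ix$. Hence the relations displayed just before Proposition~\ref{prop:divided_biv_Wilson} are in fact \emph{operator} identities valid on arbitrary functions, $\mathbb D_x\mapsto-\mathbf D_x$, $\mathbb S_x\mapsto\mathbf S_x$, $\mathbb D_y\mapsto-\mathbf D_y$, $\mathbb S_y\mapsto\mathbf S_y$, since $\mathbb D_x$ and $\mathbb S_x$ depend only on the lattice and the half-shifts, which transform universally. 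This is the point that legitimizes substituting them inside the compound operators appearing in the equations; in particular $\mathbb D_x^2\mapsto\mathbf D_x^2$ and $\mathbb S_x\mathbb D_x\mapsto-\mathbf S_x\mathbf D_x$, and likewise in $y$.

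For the first equation I would start from the second-order divided-difference equation in the lattice $x(s)$ satisfied by $R_{n,m}$ displayed above (coming from $\mathcal L_1^x$ of \cite{MR2784425}),
\begin{multline*}
\Big(-(x(s))^2+x(s)y(t)+\big(\beta_0\beta_2-\tfrac{\beta_1}{2}(\beta_2+\beta_0)\big)x(s)+\tfrac{\beta_1}{2}(\beta_1-\beta_0)y(t)\Big)\mathbb D_x^2R_{n,m}(s,t)\\
+\big((\beta_0-\beta_2)x(s)+(\beta_1-\beta_0)y(t)\big)\mathbb S_x\mathbb D_xR_{n,m}(s,t)+n(\beta_2-\beta_0+n-1)R_{n,m}(s,t)=0.
\end{multline*}
Applying the operator substitutions above, replacing the parameters by $\beta_0=a-b$, $\beta_1=2a$, $\beta_2=2a+2e_2$, $\beta_3=2a+2e_2+c+d$ and multiplying through by $-1$ converts the eigenvalue $n(\beta_2-\beta_0+n-1)$ into $n(n-1+a+b+2e_2)$ and yields an equation of exactly the stated shape for $W_{n,m}$. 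It then remains only to identify the two polynomial coefficients; for instance the coefficient of $\mathbf S_x\mathbf D_x$ becomes $\big((\beta_0-\beta_2)x(s)+(\beta_1-\beta_0)y(t)\big)\big|_{V_0}=(a+b+2e_2)x^2-(a+b)y^2-2abe_2-ae_2^2-be_2^2$, which is precisely the coefficient written in the statement.

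For the second equation I would argue in the mirror-symmetric way. The polynomial $\bar R_{n,m}$ of \eqref{eq:secondfamilyracah} depends on $t$ only through the argument $N-t$ of its second univariate factor, so — exactly as $R_{n,m}$ satisfies a clean second-order equation in $x(s)$ — it likewise satisfies a second-order divided-difference equation in the lattice $y(t)$, the mirror of the one above, with eigenvalue $m(\beta_3-\beta_1+m-1)$. Applying $V_0$ with $\mathbb D_y\mapsto-\mathbf D_y$, $\mathbb S_y\mapsto\mathbf S_y$ and using $\beta_3-\beta_1=c+d+2e_2$ produces the claimed equation for $\bar W_{n,m}$ with eigenvalue $m(m-1+c+d+2e_2)$. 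As an independent check, $\bar W_{n,m}(x,y)$ factors as a $y$-free term $w_n(x^2;\dots)$ times the univariate Wilson polynomial $w_m(y^2;c,d,e_2+ix,e_2-ix)$; since $\mathbf D_y,\mathbf S_y$ act only on $y$ the first factor passes through, so the equation is just the univariate Wilson equation in $y$, whose symmetric coefficients involve the parameters only through $(e_2+ix)(e_2-ix)=e_2^2+x^2$ and $2e_2$ and are therefore polynomials in $x^2$ and $y^2$.

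The main obstacle is purely computational: checking that the four transformed coefficients (two per equation) coincide with the polynomials in the statement. This reduces to expanding $x(s)=-(x^2+a^2)$ and $y(t)=-(y^2+(a+e_2)^2)$, inserting the parameter substitution, and comparing polynomials in $x^2,y^2$ term by term. I would organize the comparison by descending degree — the leading $x^4$ and the cross term $x^2y^2$ first, then the quadratic and constant terms — the constant and lower-order pieces of the $\mathbb D_x^2$-coefficient, which involve $\beta_0\beta_2-\tfrac{\beta_1}{2}(\beta_2+\beta_0)$ and $\tfrac{\beta_1}{2}(\beta_1-\beta_0)$, being the parts most prone to sign and bookkeeping errors.
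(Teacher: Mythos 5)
Your proposal is correct and follows the same route the paper intends (the paper offers no written proof beyond the phrase ``using the change of variables \eqref{eq:ractowil}''): you transport the second-order divided-difference equations in $x(s)$ (resp.\ $y(t)$) for the Racah families through $V_0$, using the operator identities $\mathbb{D}_x\mapsto-\mathbf{D}_x$, $\mathbb{S}_x\mapsto\mathbf{S}_x$ and the substitutions $x(s)=-(x^2+a^2)$, $y(t)=-(y^2+(a+e_2)^2)$, and your sample coefficient computations check out. Your additional observation that the second equation is just the univariate Wilson equation for $w_m(y^2;c,d,e_2+ix,e_2-ix)$ acting through the $y$-free factor of $\bar W_{n,m}$ is a clean independent verification that sidesteps the unstated mirror Racah equation.
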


From the divided-difference equation of the bivariate Wilson polynomials, we can also derive a difference equation they satisfy with rational coefficients as given in
\begin{proposition}
The bivariate Wilson polynomials $W_{n,m}(x,y):=W_{n,m}(x,y;a,b,c,d;e_2)$ are solution of the difference equation
\begin{multline}\label{eq:diffeqWilson}
F_1 W_{n,m}(x+i,y+i)+F_2 W_{n,m}(x+i,y-i)+F_3 W_{n,m}(x-i,y+i)
+F_4 W_{n,m}(x-i,y-i)+F_5 W_{n,m}(x+i,y)\\+F_6 W_{n,m}(x,y+i)+F_7 W_{n,m}(x-i,y)
+F_8 W_{n,m}(x,y-i)+F_9 W_{n,m}(x,y)=0,
\end{multline}
with
\[
F_1=\frac{f_1-xyf_4+i(xf_2+yf_3)}{4x(2x+i)y(2y+i)},\quad F_2=-\frac{f_1+xyf_4+i(xf_2-yf_3)}{4x(2x+i)y(-2y+i)},
\]
\[
F_3=\frac{-f_1-xyf_4+i(xf_2-yf_3)}{4x(-2x+i)y(2y+i)},\quad F_4=-{\frac {-f_{{1}}+if_{{2}}x+if_{{3}}y+f_{{4}}yx}{4 \left( -2\,y+i \right) y \left( -2\,x+i \right) x}},
\]
\[
F_5={\frac {-i \left( if_{{3}}-2\,f_{{2}}x+2\,if_{{1}}-4\,f_{{7}}x{y}
^{2}-f_{{7}}x-f_{{4}}x+4\,if_{{5}}{y}^{2}+if_{{5}} \right) }{2 \left( 2
\,x+i \right) x \left( 2\,y+i \right) \left( -2\,y+i \right) }},
 \]
\[
F_6={\frac {-\,i \left( 4\,if_{{6}}{x}^{2}+if_{{6}}+if_{{2}}-4\,f_{{8}}
y{x}^{2}-f_{{8}}y-f_{{4}}y-2\,f_{{3}}y+2\,if_{{1}} \right) }{ 2\left( 2
\,y+i \right) y \left( 2\,x+i \right) \left( -2\,x+i \right) }},
\]
\[
F_7={\frac {i \left( 2\,if_{{1}}+f_{{4}}x+if_{{3}}+4\,if_{{5}}{y}^{2}
+if_{{5}}+2\,f_{{2}}x+4\,f_{{7}}x{y}^{2}+f_{{7}}x \right) }{ 2\left( -2
\,x+i \right) x \left( 2\,y+i \right) \left( -2\,y+i \right) }},
\]
\[
F_8={\frac {i \left( 2\,if_{{1}}+4\,if_{{6}}{x}^{2}+if_{{6}}+f_{{4}}y
+if_{{2}}+2\,f_{{3}}y+4\,f_{{8}}y{x}^{2}+f_{{8}}y \right) }{2 \left( -2
\,y+i \right) y \left( 2\,x+i \right) \left( -2\,x+i \right) }},
\]
\begin{multline*}
F_9=(m + n)(2e_2 + a + b + c + d + m + n- 1)+ \\
{\frac {4 f_{{1}}+f_{{8}} (4{x}^{2}+1)+f_{{7}} (4{y}^{2}+1)
+f_{{6}}(8{x}^{2}+2)+f_{{4}}+2 (f_{{2}}+f_{{3}})+f_{{5}}(8
{y}^{2}+2)}{ \left( 2\,y+i \right) \left( -2\,y+i \right)
 \left( 2\,x+i \right) \left( -2\,x+i \right) }},
\end{multline*}
where $f_i,\, i=1,\ldots,8$ are the coefficients of the divided-difference equation \eqref{eqdiv-diff-biv-wil}.
\end{proposition}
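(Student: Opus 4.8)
The plan is to run the argument of Theorem~\ref{eq:theorem1} in reverse. There, one passed from a nine-point difference equation with rational coefficients to a divided-difference equation with polynomial coefficients; here I would start from the divided-difference equation \eqref{eqdiv-diff-biv-wil} for $W_{n,m}$, already established in Proposition~\ref{prop:divided_biv_Wilson}, and rewrite it as the nine-point difference equation \eqref{eq:diffeqWilson}, reading off the coefficients $F_1,\dots,F_9$ in the process. Since $\mathbf{D}_x$ and $\mathbf{S}_x$ act through the half-shifts $x\mapsto x\pm i/2$ (and similarly in $y$), every composite operator in \eqref{eqdiv-diff-biv-wil} involves only integer multiples of $i$, so each of the nine terms is an exact linear combination of the nine shifted values $W_{n,m}(x+\ell_1 i,\,y+\ell_2 i)$, $\ell_1,\ell_2\in\{-1,0,1\}$.

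Concretely, I would first record the one-variable expansions. A short computation gives $\mathbf{D}_x^2 f(x)=\frac{1}{2ix}\bigl[\frac{f(x+i)-f(x)}{i(2x+i)}-\frac{f(x)-f(x-i)}{i(2x-i)}\bigr]$, together with an analogous one-line formula for $\mathbf{S}_x\mathbf{D}_x f(x)$, and their counterparts in $y$; the factors $(2x\pm i)$ and $(2y\pm i)$ produced here are precisely the denominators appearing in the $F_j$. Taking the appropriate products of these expansions yields the four mixed fourth-order operators $\mathbf{D}^2_x\mathbf{D}^2_y$, $\mathbf{S}_x\mathbf{D}_x\mathbf{D}^2_y$, $\mathbf{S}_y\mathbf{D}_y\mathbf{D}^2_x$, $\mathbf{S}_x\mathbf{D}_x\mathbf{S}_y\mathbf{D}_y$, while $\mathbf{D}^2_x$ and $\mathbf{S}_x\mathbf{D}_x$ (respectively their $y$-analogues) touch only the three horizontal (respectively vertical) neighbours and the centre.

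Next I would substitute these expansions into \eqref{eqdiv-diff-biv-wil} and collect, for each pair $(\ell_1,\ell_2)$, the coefficient of $W_{n,m}(x+\ell_1 i,\,y+\ell_2 i)$. By the support considerations just noted, the corner coefficients $F_1,\dots,F_4$ receive contributions only from $f_1,f_2,f_3,f_4$; the edge coefficients $F_5,\dots,F_8$ gather the relevant second- and first-order pieces together with $f_1,\dots,f_4$; and the central coefficient $F_9$ aggregates the un-shifted part of all nine terms, including the eigenvalue $(m+n)(2e_2+a+b+c+d+m+n-1)$, which passes through unchanged. Clearing the common denominator $4x(2x+i)(-2x+i)\,y(2y+i)(-2y+i)$ and simplifying should reproduce the stated expressions for the $F_j$, after which the proposition follows at once, since $W_{n,m}$ annihilates the left-hand side of \eqref{eqdiv-diff-biv-wil} and the rewriting is exact.

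The only genuinely delicate point is the bookkeeping in $F_9$: being the coefficient of the unshifted polynomial, it collects the central contribution of every one of the nine operators, so its simplification over the full common denominator is where an arithmetic slip is most likely. The corner and edge coefficients, by contrast, each involve at most four of the $f_i$ and are comparatively short, so I would organise the calculation by first fixing those and using them as a partial check (via the requirement that the total operator reproduce \eqref{eqdiv-diff-biv-wil} when the shifts are re-expanded) before tackling $F_9$.
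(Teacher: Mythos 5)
Your proposal is correct and follows essentially the same route as the paper: expand each divided-difference operator in \eqref{eqdiv-diff-biv-wil} over the nine shifted values $W_{n,m}(x+\ell_1 i,\,y+\ell_2 i)$, $\ell_1,\ell_2\in\{-1,0,1\}$, and collect coefficients to read off $F_1,\dots,F_9$. Your one-variable expansion of $\mathbf{D}_x^2$ and the support analysis (corners fed only by $f_1,\dots,f_4$, edges by the corresponding single-variable terms as well, the centre by everything plus the eigenvalue) are accurate and merely make explicit the bookkeeping the paper leaves implicit.
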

\begin{proof}
Expand the expressions $\mathbf{D}^2_x\mathbf{D}^2_y W_{n,m}(x,y)$, $\mathbf{S}_x\mathbf{D}_x\mathbf{D}^2_yW_{n,m}(x,y)$, \ldots, $\mathbf{S}_y\mathbf{D}_y W_{n,m}(x,y)$ appearing in the divided-difference equation of the bivariate Wilson polynomials and collect with respect to $W_{n,m}(x+i,y+i)$, $W_{n,m}(x+i,y)$, \ldots, $W_{n,m}(x,y)$ to get the result.
\end{proof}

\subsection{Conjecture on the partial difference equation satisfied by the $p$-variate Wilson polynomials}\label{sec:conjwilson}

Let $p$ be a positive integer, ${\pmb{n}}=(n_{1},n_{2},\dots,n_{p})$, ${\pmb{x}}=(x_{1},x_{2},\dots,x_{p})$, and ${\pmb{e}}=(e_{2},\dots,e_{p})$. The multivariable Wilson polynomials of $p$ variables are defined by (see \cite{MR1123596})
\begin{multline}\label{eq:mvwilson}
W_{\pmb{n}}({\pmb{x}}; a,b,c,d;{\pmb{e}}_{p}) \\
=\Big(\prod_{k=1}^{p-1}w_{n_k}(x_k^2;N_1^{k-1}+a+E_2^k,N_1^{k-1}+b+E_2^k,e_{k+1}+ix_{k+1},e_{k+1}-ix_{k+1})\Big)\\
\times w_{n_p}(x_p^2;N_1^{p-1}+a+E_2^p,N_1^{p-1}+b+E_2^p,c,d),
\end{multline}
where $N_1^j=n_1+n_2+\cdots+n_j$, $E_2^j=e_2+e_3+\cdots+e_j$ with $N_1^0=E_2^1=0$. They are polynomials of total degree $N_1^p$ in the variable $x_1^2,x_2^2,\ldots,x_p^2$. In a similar way as in the case of the multivariate Racah polynomials ---see section \ref{sec:conjracah}---, for any $l_1,l_2,\ldots,l_p\in\{0,1,2\}$, we define the operator $E_{(l_1,l_2,\ldots,l_p)}$ which is equal to the product of $\mathbf{S}_{x_i}\mathbf{D}_{x_i}$ and $\mathbf{D}_{x_i}^2$ such that for $i=1,2,\ldots,p$ if $l_i=0$, there is not $\mathbf{S}_{x_i}\mathbf{D}_{x_i}$ and $\mathbf{D}_{x_i}^2$ in the product, if $l_i=1$, then there is $\mathbf{S}_{x_i}\mathbf{D}_{x_i}$ but not $\mathbf{D}_{x_i}^2$ in the product and if $l_i=2$, then there is $\mathbf{D}_{x_i}^2$ but not $\mathbf{S}_{x_i}\mathbf{D}_{x_i}$ in the product.
This operator is defined explicitly by
 \[E_{(l_1,l_2,\ldots,l_p)}=\prod_{i=0}^p\Big(\mathbf{S}_{x_i}\mathbf{D}_{x_i}\Big)^{-l_i(l_i-2)}\Big(\mathbf{D}_{x_i}^2\Big)^{\frac{1}{2}l_i(l_i-1)}.\]
Using these notations, we can write for $p=2$ Equation \eqref{eqdiv-diff-biv-wil} as
\begin{multline*}
f_1(x,y) E_{(2,2)}W_{n,m}(x,y) + f_2(x,y)E_{(1,2)}W_{n,m}(x,y)+f_3(x,y)E_{(2,1)}W_{n,m}(x,y)
+f_4(x,y) E_{(1,1)} W_{n,m}(x,y)\\
+ f_5(x)E_{(2,0)} W_{n,m}(x,y)
+f_6(y)E_{(0,2)} W_{n,m}(x,y)+f_7(x) E_{(1,0)}W_{n,m}(x,y)\\
+f_8(y)E_{(0,1)} W_{n,m}(x,y)+(m+n)(2e_2+a+b+c+d+m+n-1) W_{n,m}(x,y)=0.
\end{multline*}
We have the following conjecture
\begin{conjecture}\label{prop:multiWilson}
The $p$-variate Wilson polynomials $W_{\pmb{n}}({\pmb{x}}; a,b,c,d;{\pmb{e}}_{p})$ defined in \eqref{eq:mvwilson}
are solution of a $2p$-order partial linear divided-difference equation with polynomial coefficients ($f_i(x)$) of the form
\begin{multline*}
\sum_{\underset{l_1+l_2+\cdots+l_p=i}{i=1}}^{2p}f_i(\pmb{x})E_{(l_1,l_2,\ldots,l_p)}W_{\pmb{n}}({\pmb{x}}; a,b,c,d;{\pmb{e}}_{p}) \\
+(n_1+n_2+\cdots+n_p) (n_1+n_2+\cdots+n_p-1+a+b+c+d+2(e_2+e_3+\cdots+e_p))W_{\pmb{n}}({\pmb{x}}; a,b,c,d;{\pmb{e}}_{p}) =0,
\end{multline*}
where for any $i$ from 1 to $2p$, we take all the combinations of $l_1,l_2,\ldots,l_p\in\{0,1,2\}$ such that $l_1+l_2+\cdots+l_p=i$, $f_i(x)$ is a polynomial of degree $l_1+l_2+\cdots+l_p$ in the lattices $x_1^2,x_2^2,\ldots,x_p^2$ and if $l_j=0$, then $f_i$ does not depend on $x_j$.
 \end{conjecture}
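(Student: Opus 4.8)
Granting the $p$-variate Racah conjecture \eqref{conjectRacah}, the plan is to obtain the Wilson statement by exactly the limiting mechanism that produced Proposition \ref{prop:divided_biv_Wilson} from Theorem \ref{eq:theorem1} when $p=2$. I would apply the $p$-variate generalization of the change of variables \eqref{eq:ractowil}, with the sign correction noted after \eqref{eq:bwp}, whose parameter dictionary is $\beta_0=a-b$, $\beta_k=2a+2(e_2+\cdots+e_k)$ for $1\le k\le p$, and $\beta_{p+1}=2a+2(e_2+\cdots+e_p)+c+d$. Under this substitution each lattice $y_i(x_i)=x_i(x_i+\beta_i)$ becomes $-x_i^2$ up to an additive constant, the operators transform as $\mathbb{D}_{y_i}\mapsto-\mathbf{D}_{x_i}$ and $\mathbb{S}_{y_i}\mapsto\mathbf{S}_{x_i}$ (so every block $E_{(l_1,\dots,l_p)}$ is carried to its Wilson analogue up to the sign $(-1)^{\#\{i:\,l_i=1\}}$), and a coefficient that is polynomial of degree $l_1+\cdots+l_p$ in the lattices $y_i(x_i)$, independent of $x_j$ when $l_j=0$, is carried to a polynomial of the same degree in $x_1^2,\dots,x_p^2$ with the same independence property. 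The eigenvalue matches as well, since $\beta_{p+1}-\beta_0=a+b+c+d+2(e_2+\cdots+e_p)$ turns the Racah eigenvalue of \eqref{conjectRacah} into the Wilson one stated in the conjecture. Absorbing the signs into the $f_i$, the entire structural content of the Wilson conjecture is thus inherited from the Racah one.

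The decisive task is therefore to establish the Racah conjecture \eqref{conjectRacah} itself, and here I would follow the route of Theorem \ref{eq:theorem1}. One starts from the $2p$-order difference equation for the $p$-variate Racah polynomials with rational coefficients furnished by Geronimo and Iliev, namely the analogue of \eqref{racah_Geronimo_Iliev} given by Equation (4.14a) of \cite[Theorem 4.6]{MR2784425}, whose terms are the $3^p$ evaluations $R_{\pmb n}(\pmb x+\pmb\ell)$ with $\pmb\ell\in\{-1,0,1\}^p$. Writing each operator image $E_{(l_1,\dots,l_p)}R_{\pmb n}$ as a linear combination of these $3^p$ evaluations, inverting the resulting $3^p\times3^p$ linear system, and substituting back, one reads off the coefficient $f_i(\pmb x)$ attached to each $E_{(l_1,\dots,l_p)}$. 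What must then be proved is that these coefficients are genuinely polynomial (the denominators $(\beta_i+2x_i)(\beta_i+2x_i+1)$ and their shifts must cancel), that $f_i$ has degree exactly $l_1+\cdots+l_p$ in the lattices $y_i(x_i)$, and that $f_i$ does not depend on $x_j$ whenever $l_j=0$; this is precisely the content of the conjecture, read off term by term as indicated in the third item of the Remark following it.

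I would carry out this polynomiality-and-degree verification by induction on $p$, the inductive engine being a $p$-variate analogue of Theorem \ref{theorem:2} together with the parameter-shift relations of the type \eqref{eq:fi2}: the nested product structure of \eqref{eq:mvrp} makes $\mathbb{D}_{y_k}R_{\pmb n}$ again a $p$-variate Racah polynomial with shifted parameters and total degree lowered by one, which both supplies the inductive step and forces each coefficient to inherit the claimed degree from the lower-dimensional data. The main obstacle is exactly this bookkeeping for general $p$: the brute-force computation that closed the case $p=2$ does not scale, so one needs an abstract argument showing that the rational coefficients produced by inverting the $3^p\times3^p$ system have no poles on the hyperplanes $\beta_i+2x_i\in\mathbb{Z}$ introduced by the inversion---these poles must be cancelled by the numerators coming from Geronimo and Iliev's operator---and that the degrees and vanishing patterns are exactly as predicted. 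Controlling this cancellation uniformly in $p$, rather than verifying it case by case, is where I expect the real difficulty of the proof to lie.
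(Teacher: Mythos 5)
The statement you are asked to prove is labelled a \emph{conjecture} in the paper: the authors give no proof of it for general $p$, offering only the case $p=2$ (Proposition \ref{prop:divided_biv_Wilson}, obtained from the bivariate Racah equation \eqref{eqdiv-diff-biv} by the substitution \eqref{eq:ractowil}) and, as circumstantial evidence, an explicit sixth-order equation for the trivariate continuous Hahn polynomials. Your reduction of the Wilson conjecture to the Racah conjecture \eqref{conjectRacah} is sound and is exactly the mechanism the paper uses at $p=2$: the dictionary $\beta_0=a-b$, $\beta_k=2a+2(e_2+\cdots+e_k)$, $\beta_{p+1}=2a+2(e_2+\cdots+e_p)+c+d$ sends each lattice $y_k(x_k)$ to $-x_k^2$ plus a constant, carries $\mathbb{D}_{y_k}$ to $-\mathbf{D}_{x_k}$ and $\mathbb{S}_{y_k}$ to $\mathbf{S}_{x_k}$, and turns the Racah eigenvalue into the Wilson one since $\beta_{p+1}-\beta_0=a+b+c+d+2(e_2+\cdots+e_p)$; the residual signs $(-1)^{\#\{k:\,l_k=1\}}$ are harmlessly absorbed into the $f_i$. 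All of that bookkeeping is correct.

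The gap is that the load-bearing step is never carried out. Your argument establishes only the implication that the Racah conjecture implies the Wilson conjecture; the Racah conjecture is itself unproven in the paper, and your proposed attack on it --- invert the $3^p\times 3^p$ system expressing the $E_{(l_1,\ldots,l_p)}R_{\pmb n}$ in terms of the shifts $R_{\pmb n}(\pmb x+\pmb\ell)$, substitute into Geronimo--Iliev's equation (4.14a), and verify polynomiality, the degree bound $l_1+\cdots+l_p$, and the vanishing pattern when $l_j=0$ --- is a restatement of the open problem, not a proof of it, as you concede in your final sentence. The induction on $p$ you sketch also needs more than the divided-difference derivative relations: a $p$-variate analogue of Theorem \ref{theorem:2} shows that $\mathbb{D}_{y_k}R_{\pmb n}$ satisfies an equation of the same form \emph{provided} $R_{\pmb n}$ already does, so it propagates the hypergeometric character within a fixed $p$ but does not manufacture the base equation in $p$ variables from the one in $p-1$ variables; the uniform cancellation of the denominators $(\beta_k+2x_k)(\beta_k+2x_k\pm 1)$ for general $p$ remains entirely unaddressed. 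As it stands the proposal is a plausible research programme consistent with the paper's evidence, but it does not constitute a proof of the statement.
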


\subsection{Coefficients of the three-term recurrence relations satisfied by bivariate Wilson polynomials solution of \eqref{eqdiv-diff-biv-wil} and connection between them}
In what follows we shall use the following bases
\begin{equation*}
F_n(x(s))=(-4)^{-n} \left(-2 s+\frac{1}{2}\right)_n \left(+2 s+\frac{1}{2}\right)_n,  \quad
F_n(y(t))=(-4)^{-n} \left(-2 t+\frac{1}{2}\right)_n \left(+2 t+\frac{1}{2}\right)_n ,
\end{equation*}
obtained from \eqref{eq:baseff1} and \eqref{eq:baseff2} by simply considering $\beta_{1}=\beta_{2}=0$, as well as
\begin{equation*}
\textbf{F}_n= (F_{n-k}(x(s))F_{k}(y(t)))\,,\quad 0 \leq k \leq n, \quad n\in \mathbb{N}_0\,,
\end{equation*}
and
\begin{equation*}
\textbf{x}^n= (s^{2(n-k)}t^{2k})\,,\quad 0 \leq k \leq n, \quad
n\in \mathbb{N}_0\,.
\end{equation*}
Therefore, a similar relation as \eqref{4} is satisfied where the coefficients are obtained from those in \eqref{eq:hhhh} by setting $\beta_{1}=\beta_{2}=0$.

The column vectors of bivariate Wilson polynomials satisfy a three-term recurrence relation of the form \eqref{RRTT}. The coefficients of the matrices can be obtained by using the same procedure as described for bivariate Racah polynomials, by using \eqref{eq:matricesgp} with $\lambda_{n}=n(2e_{2}+a+b+c+d+n-1)$.

\begin{proposition}
The matrices ${\mathbf{S}}_{n}$ of size $(n+1)\times n$ and $\mathbf{T}_n$ of size $(n+1)\times (n-1)$ have the same structure as in \eqref{eq:sn} and \eqref{eq:tn} respectively, and their coefficients are given in terms of the polynomial coefficients of the equation \eqref{eqdiv-diff-biv-wil} as
\begin{align*}
s_{k,k}&=\frac{1}{6} (-k+n+1) \left(6 (k-1) (n-k) \left(2 a+2 b+c+d+2 e_2+1\right)+(4 k-4 n+1) (k-n) \right. \\
& \left. \times \left(a+b+c+d+2 e_2\right)+6 (n-k) \left(e_2 \left(2 a+2 b+c+d+e_2\right)+a (b+c+d) \right. \right. \\
& \left. \left. +b (c+d)+c d\right) +6 (k-1) \left(a (2 b+c+d+1)+2 e_2 (a+b)+b (c+d+1)\right) \right. \\
&\left. +6 \left(e_2 \left(a (2 b+c+d)+e_2 (a+b)+b (c+d)\right) +a d (b+c)+a b c+b c d\right) \right. \\
&\left. +6 (k-2) (k-1) (a+b)+2 (n-k) (k (2 n-5)+(n-5) n+7)\right), \phantom{aaaaaaaaaaa} k=1,\dots,n, \\
s_{k+1,k}&=\frac{1}{6} k \left(2 e_2 \left(3 a (c+d+k-1)+3 b (c+d+k-1)+3 e_2 (c+d+k-1)+6 n (c+d+k-1) \right. \right. \\
& \left. \left. +6 c d-6 c-6 d-2 k^2-3 k+5\right)+a \left(6 b (c+d+k-1)+6 c (d+n-1)+6 n (d+k-1) \right. \right. \\
& \left. \left. -6 d-2 k^2-3 k+5\right) +b (6 c (d+n-1)+6 d (n-1)-(k-1) (2 k-6 n+5)) \right. \\
& \left. +6 n^2 (c+d+k-1)-2 n (-6 c (d-1)+6 d+(k-1) (2 k+5))-(k+1) (2 k (c+d-2) \right. \\
& \left. +6 c d-5 c-5 d+4)\right), \phantom{aaaaaaaaaaaaaaaaaaaaaaaaaaaaaaaaaaaaaaaaa} k=1,\dots,n,
\end{align*}
\begin{align*}
t_{k,k}&=\frac{1}{180} (n-k) (-k+n+1) \left(6 (k-n+1) \left(4 k^2+k (5-8 n)+n (4 n-5)-1\right) \left(a+b+c+d+2 e_2\right) \right. \\
&\left. -60 (k-1) (k-n) (k-n+1) \left(2a+2 b+c+d+2 e_2+1\right)+30 (k-1) (4 k-4 n+1) \left(a (2 b+c \right. \right. \\
& \left. \left. +d+1) +2 e_2 (a+b)+b (c+d+1)\right)-60 (k-n) (k-n+1) \left(e_2 \left(2 a+2 b+c+d+e_2\right) \right. \right. \\
& \left. \left.+a (b+c+d)+b (c+d)+c d\right)-30 (-4 k+4 n-1) \left(e_2 \left(a (2 b+c+d)+e_2 (a+b)+b (c+d)\right) \right. \right. 
\end{align*}
\begin{align*}
& \left. \left. +a d (b+c)+a b c+b c d\right)-180 a b (k-1) \left(c+d+2 e_2+1\right)-180 a b \left(c+e_2\right) \left(d+e_2\right) \right.  \\
& \left. +30 (k-2) (k-1) (a+b) (4 k-4 n+1)-180 a b (k-2) (k-1)-(k-n+1) \left(20 k^3+12 k^2 (n-14) \right. \right.  \\
& \left. \left. +k (205-24 (n-4) n)+n (-8 (n-9) n-193)-18\right)\right), \phantom{aaaaaaaaaaaaaaaaaa} k=1,\dots,n-1,  \\
t_{k+1,k}&=\frac{1}{18} k (n-k) \left(6 e_2 \left(-3 e_2 (c+d+k-1) (a+b-k+n-1)+a \left(-6 b (c+d+k-1) \right. \right. \right. \\
&\left. \left. \left. -6 n (c+d+k-1)-6 c d+9 c+9 d+2 k^2+6 k-8\right)+b \left(-6 n (c+d+k-1)-6 c d+9 c \right. \right. \right. \\
&\left. \left. \left.+9 d+2 k^2+6 k-8\right)+(k-n+1) (2 c (3 d+k+2 n-4)+2 d (k+2 n-4)+(k-1) (4 n-7))\right) \right. \\
& \left. +3 a \left(-2 b \left(3 k (c+d-2)+6 c d-6 c-6 d+k^2+5\right)-6 b n (c+d+k-1)-4 n^2 (c+d+k-1) \right. \right. \\
& \left. \left. +n (-4 k (c+d)+3 c (5-4 d)+15 d+13 (k-1))+6 c d k+12 c d+4 c k^2-10 c+4 d k^2-10 d \right. \right. \\
& \left. \left. +2 k^3-5 k^2-5 k+8\right)+3 b \left(c \left(6 d (k-2 n+2)+4 k^2-4 k n+(15-4 n) n-10\right) \right. \right.  \\
& \left. \left. +d \left(4 k^2-4 k n+(15-4 n) n-10\right)+(k-1) (k (2 k-3)+(13-4 n) n-8)\right) \right. \\
& \left. -(k-n+1) \left(3 c \left(2 d (k-4 n+5)-4 k n+k (2 k+3)-2 n^2+10 n-8\right) +3 d \left(-4 k n+k (2 k+3) \right. \right. \right. \\
& \left. \left. \left. -2 n^2+10 n-8\right)+(k-1) \left(4 k^2-4 k n-6 n^2+26 n-19\right)\right)\right), \phantom{aaaaaaaaaaaa} k=1,\dots,n-1,
\end{align*}
\begin{align*}
t_{k+2,k}&=\frac{1}{180} k (k+1) \left(180 c d (k-n+1) \left(a+b+2 e_2+1\right)+60 (k-1) k (k-n+1) \left(a+b+2 c+2 d \right. \right. \\
& \left. \left. +2 e_2+1\right)+30 (4 k-1) (k-n+1)\left(c (a+b+2 d+1)+d (a+b+1)+2 e_2 (c+d)\right) \right.  \\
& \left.  -6 (k-1) (k (4 k-5)-1) \left(a+b+c+d+2 e_2\right)-60 (k-1) k \left(e_2 \left(a+b+2 (c+d)+e_2\right) \right. \right. \\
& \left. \left. +a (b+c+d)+b (c+d)+c d\right)-30 (4 k-1) \left(e_2 \left(d (a+b+2 c)+c (a+b)+e_2 (c+d)\right) \right. \right. \\
& \left. \left. +a d (b+c)+a b c+b c d\right)-180 c d \left(a+e_2\right) \left(b+e_2\right)-180 c d (k-n+1) (k-n+2) \right. \\
& \left.-30 (4 k-1) (c+d) (k-n+1) (k-n+2)-(k-1) \left(20 k^3+24 k^2 (7-3 n) \right. \right. \\
& \left. \left. +5 k (12 (n-4) n+41)-12 n+18\right)\right), \phantom{aaaaaaaaaaaaaaaaaaaaaaaaaaaaaa} k=1,\dots,n-1.
\end{align*}
\end{proposition}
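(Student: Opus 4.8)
The plan is to carry out, for the Wilson equation \eqref{eqdiv-diff-biv-wil}, the very same construction that yielded $\mathbf{S}_n$ and $\mathbf{T}_n$ in the Racah case, the only changes being the operators $\mathbf{D}_x,\mathbf{S}_x,\mathbf{D}_y,\mathbf{S}_y$, the coefficients $f_i$ of Proposition~\ref{prop:divided_biv_Wilson}, and the bases. First I would record the action of the Wilson operators on the vectors $\mathbf{F}_n$ built from the bases of this subsection. Since these bases are exactly \eqref{eq:baseff1}--\eqref{eq:baseff2} at $\beta_1=\beta_2=0$, the relations \eqref{1}, \eqref{3} and \eqref{2} persist with $f_n(0)=\tfrac{1}{16}(2n+1)^2$ and $g_n=\tfrac{1}{4}n(2n-1)$; hence the matrix identities $\mathbf{D}_x\mathbf{F}_n=E_{n,1}\mathbf{F}_{n-1}$, $\mathbf{S}_x\mathbf{F}_n=\mathbf{F}_n+J_{n,1}\mathbf{F}_{n-1}$, together with the multiplication and $y$-relations, hold with the matrices $E_{n,j},J_{n,j},L_{n,j},M_{n,j}$ of Section~\ref{sec:3} evaluated at $\beta_1=\beta_2=0$.

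Next I substitute the Wilson analogue of the expansion \eqref{EXPP1}, namely $\mathbf{P}_n=G_{n,n}'\mathbf{F}_n+G_{n,n-1}'\mathbf{F}_{n-1}+G_{n,n-2}'\mathbf{F}_{n-2}+\cdots$, into \eqref{eqdiv-diff-biv-wil}. Using the operator algebra for $\mathbf{D}_x,\mathbf{S}_x$ recalled after Proposition~\ref{prop:divided_biv_Wilson} and the coefficients $f_i$ listed there, each of the nine terms of \eqref{eqdiv-diff-biv-wil} applied to $\mathbf{P}_n$ is rewritten as a combination of the $\mathbf{F}_j$. Equating the coefficient of $\mathbf{F}_{n-1}$ and then of $\mathbf{F}_{n-2}$ produces precisely the two relations \eqref{eq:matricesgp}, now with eigenvalue $\lambda_n=n(2e_2+a+b+c+d+n-1)$: the top-level part of the operator applied to $\mathbf{F}_n$, projected one degree down, defines $\mathbf{S}_n$, while its projection two degrees down defines $\mathbf{T}_n$ and the one-degree projection applied to $\mathbf{F}_{n-1}$ reproduces $\mathbf{S}_{n-1}$, matching the second line of \eqref{eq:matricesgp}.

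The band structure asserted in \eqref{eq:sn}--\eqref{eq:tn} is then a generic consequence of this construction, independent of the precise $f_i$. Indeed, a single top-degree lowering sends the component $F_{n-k}(x)F_k(y)$ of $\mathbf{F}_n$ only to $F_{n-1-k}(x)F_k(y)$ or $F_{n-k}(x)F_{k-1}(y)$ at level $n-1$, forcing $\mathbf{S}_n$ to be lower-bidiagonal with entries $s_{k,k}$ and $s_{k+1,k}$; a two-step lowering reaches only $F_{n-2-k}(x)F_k(y)$, $F_{n-1-k}(x)F_{k-1}(y)$ and $F_{n-k}(x)F_{k-2}(y)$, confining $\mathbf{T}_n$ to the three diagonals $t_{k,k},t_{k+1,k},t_{k+2,k}$. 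The explicit entries are obtained by isolating, band by band, the leading contributions of the relevant $f_i$ combined with the corresponding operator products, evaluated at $\beta_1=\beta_2=0$.

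The main obstacle is purely the bookkeeping of this last step: for each of the bands $(k,k)$, $(k+1,k)$ and $(k+2,k)$ one must assemble all contributions coming from the eight coefficients $f_i$ of differing degrees together with the several second-order operator products $\mathbf{D}_x^2\mathbf{D}_y^2,\ \mathbf{S}_x\mathbf{D}_x\mathbf{D}_y^2,\ \dots,\ \mathbf{S}_y\mathbf{D}_y$, and verify that every term falling outside these bands cancels. Because the resulting $s_{k,k},\dots,t_{k+2,k}$ are high-degree polynomials in $a,b,c,d,e_2,n,k$, this collection is most safely performed and checked with a computer algebra system, exactly as was done for the Racah matrices in \eqref{eq:sn}--\eqref{eq:tn}; no new structural difficulty arises beyond that computation.
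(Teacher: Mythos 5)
Your proposal follows essentially the same route the paper itself takes (and barely spells out): repeat the Racah construction of \eqref{eq:matricesgp}, \eqref{eq:sn} and \eqref{eq:tn} with the bases specialized to $\beta_1=\beta_2=0$, the coefficients $f_i$ of Proposition~\ref{prop:divided_biv_Wilson}, and the eigenvalue $\lambda_n=n(2e_2+a+b+c+d+n-1)$, with the band structure forced by the degree-lowering action of the operators and the explicit entries left to a computer algebra system. The one point to watch is your claim that \eqref{1}, \eqref{3} and \eqref{2} ``persist'' verbatim: the Wilson shifts are $x\pm\frac{i}{2}$ rather than $s\pm\frac12$, so for the bases read literally in the variable $x$ one gets sign changes (e.g.\ $\mathbf{S}_xF_1=F_1-g_1F_0$, and the $U_2$ of \eqref{eq:dxsx} becomes $-x^2$, consistent with the operator identities quoted after Proposition~\ref{prop:divided_biv_Wilson}), so one must either keep computing in the Racah variable $s=-a+ix$ --- as the paper's notation $F_n(x(s))$ tacitly does --- or use the correspondingly sign-adjusted matrices $E_{n,j}$, $J_{n,j}$, $M_{n,j}$; this is a bookkeeping adjustment that your final CAS verification would absorb rather than a structural flaw.
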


Moreover, the matrices of leading coefficients of bivariate Wilson polynomials defined in \eqref{eq:bwp} $W_{n,m}(x,y;a,b,c,d;e_2)$  are given by
\begin{equation}\label{eq:gnnbr3}
G_{n,n}=G_{n,n}(a,b,c,d;e_{2})=
\begin{pmatrix}
g_{r,s}(n,a,b,c,d;e_{2})
\end{pmatrix}_{0 \leq r,s \leq n},
\end{equation}
where
\begin{align*}
g_{r,s}(n,a,b,c,d;e_{2})=\begin{cases}
\displaystyle{0}, & r<s, \\[1mm]
\displaystyle{(-1)^{n-i-j}\,\binom{n-r}{s-r}\,(2 n - r - 1 + a + b + c + d + 2 e_{2})_{r}}\\
\qquad \times (a + b + n - s)_{r} \,(a + b + 2 e_{2} + n - r - 1)_{n-j} & r \geq s,
\end{cases}
\end{align*}
and the matrices of leading coefficients of bivariate Wilson polynomials $\bar{W}_{n,m}(x,y;a,b,c,d;e_2)$ defined in \eqref{eq:Wilsonbar} are given by
\begin{equation}\label{eq:gnnbr4}
G_{n,n}=\bar{G}_{n,n}(a,b,c,d;e_{2})=
\begin{pmatrix}
\bar{g}_{r,s}(n,a,b,c,d;e_{2})
\end{pmatrix}_{0 \leq r,s \leq n},
\end{equation}
where
\begin{align*}
\bar{g}_{r,s}(n,a,b,c,d;e_{2})=\begin{cases}
\displaystyle{0}, & r>s, \\[1mm]
\displaystyle{(-1)^n \binom{r}{s} (-c-d-r+1)_{i-j} (c+d+r+2 e_{2}-1)_{s}}\\
\qquad \times \displaystyle{ (a+b+c+d+r+n+2 e_{2}-1)_{n-r}}, & r \leq s.
\end{cases}
\end{align*}
As a consequence, both families of bivariate Wilson polynomials defined in \eqref{eq:bwp} and \eqref{eq:Wilsonbar} can be generated from the three-term recurrence relations they satisfy by using Theorem \ref{eq:matricesTTRR}, where for these specific families we have $x_{1}(x)=x^{2}$ and $y_{2}(y)=y^{2}$. Moreover, if we consider $G_{n,n}$ as the identity matrix we can introduce the family of monic bivariate Wilson polynomials by using Corollary \ref{eq:matricesTTRRmonic}. Finally, by using the matrices ${G}_{n,n}(a,b,c,d;e_{2})$ and $\bar{G}_{n,n}(a,b,c,d;e_{2})$ defined above, it is possible to solve the connection problem between the two families of bivariate Wilson polynomials defined in \eqref{eq:bwp} and \eqref{eq:Wilsonbar}, in a similar way as described for bivariate Racah polynomials \eqref{eq:connection} in section \ref{section:mbr}.

\subsection{Fourth-order linear partial divided-difference equation of the bivariate continuous dual Hahn polynomials}
The continuous dual Hahn polynomials $d_n(a,b,c \vert x)$ result upon dividing \eqref{eq:wilson} by $d^n$ and taking the limit $d\to\infty$, (see \cite{MR1123596})
\begin{equation*}
d_n(a,b,c \vert x)=(a+b)_n(a+c)_n\hyper{3}{2}{-n,a+ix,a-ix}{a+b,a+c}{1}.
\end{equation*}
Dividing \eqref{eq:bwp} by $b^{n+m}$ and taking the limit $b\to \infty$ yields (after redefining $c\to b$, $d\to c$) the bivariate continuous dual Hahn polynomials \cite{MR1123596}
\begin{equation}\label{eq:vcdhp1}
D_{n,m}(a,b,c,e_2;x,y)=d_n(a,e_2+iy,e_2-iy \vert x)d_m(n+a+e_2,b,c \vert y ).
\end{equation}
Using the above limit process, we deduce from \eqref{eqdiv-diff-biv-wil} that

\begin{proposition}\label{prop:divided_bivCDH}
The bivariate continuous dual Hahn polynomials~$D_{n,m}(a,b,c,e_2;x,y)$ are solution of the fourth-order linear partial divided-difference equation
\begin{multline}\label{eqdiv-diff-biv-CDH}
f_1(x,y) \mathbf{D}^2_x\mathbf{D}^2_yD_{n,m}(x,y) + f_2(x,y)\mathbf{S}_x\mathbf{D}_x\mathbf{D}^2_yD_{n,m}(x,y)+f_3(x,y)\mathbf{S}_y\mathbf{D}_y\mathbf{D}^2_xD_{n,m}(x,y)\\
+f_4(x,y) \mathbf{S}_x\mathbf{D}_x\mathbf{S}_y\mathbf{D}_y D_{n,m}(x,y)
+ f_5(x)\mathbf{D}^2_x D_{n,m}(x,y)+f_6(y)\mathbf{D}^2_y D_{n,m}(x,y)+f_7(x) \mathbf{S}_x\mathbf{D}_xD_{n,m}(x,y) \\
+f_8(y)\mathbf{S}_y\mathbf{D}_y D_{n,m}(x,y)+(m+n) D_{n,m}(x,y)=0,
\end{multline}
where~$D_{n,m}(x,y):=D_{n,m}(a,b,c,e_2;x,y)$ and
\begin{align*}
f_8(y)&=-{y}^{2}+ \left( c+b \right) e_2+cb+ab+ca,\\
f_7(x)&=-{x}^{2}+e_2^{2}+ \left( c+2\,a+b \right) e_2+ca+c b +ab,\\
f_6(y)&=-cb \left( a+e_2 \right) + \left( c+b+e_2+a \right) {y}^{2},
\end{align*}
\begin{align*}
f_5(x)&=-a \left( e_2+c \right) \left( e_2+b \right) + \left( 2\,e_2+a+b+c \right) {x}^{2},\\
f_4(x,y)&=\left( -b-c \right) {e_2}^{2}+ \left( -2\,ca-2\,ab-b-2\,cb-c\right) {e_2}-ca-2\,bac\\ &-cb-ab + \left( 1+2\,e_2+2\,a+b+c
 \right) {y}^{2}+ \left( c+b \right) {x}^{2}, \\
f_3(x,y)&=a \left( e_2\,b+2\,be_2\,c+ce_2+cb+c{e_2}^{2}+b{e_2}^{2} \right) +2\,{x}^{2}{y}^{2}-a \left( 2\,e_2+c+1+b \right) {y}^{2}\\
&+ \left( -b-ab-2\,e_2\,b-2\,cb-c-2\,ce_2-ca \right) {x}^{2},\\
f_2(x,y)&=cb \left( 2\,ae_2+a+e_2+{e_2}^{2} \right) +{x}^{2}{y}^{2}-{x}^{2}cb+{y}^{4} + \left( -e_2-a-b \right. \\
& \left. -{e_2}^{2}-2\,e_2\,b-2\,ae_2-2 \,ca-c-2\,ce_2-cb-2\,ab \right) {y}^{2},\\
f_1(x,y)&=-be_2\,ca \left( 1+e_2 \right) + \left( -1-2\,c-2\,e_2-2\,b-a \right) {x}^{2}{y}^{2}+cb \left( 1+2\,e_2+a \right) {x}^{2}-a{y}^{4} \\
&+a \left( cb+e_2+b+2\,e_2\,b+{e_2}^{2}+2\,ce_2+c \right) {y}^{2}.
\end{align*}
\end{proposition}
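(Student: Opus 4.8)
The plan is to obtain the bivariate continuous dual Hahn equation \eqref{eqdiv-diff-biv-CDH} as the leading-order part, in the parameter sent to infinity, of the bivariate Wilson equation \eqref{eqdiv-diff-biv-wil} established in Proposition \ref{prop:divided_biv_Wilson}. First I would record the scaling of the objects entering \eqref{eqdiv-diff-biv-wil} as $b\to\infty$. Since each factor in \eqref{eq:bwp} is a univariate Wilson polynomial whose second parameter tends to infinity ($b$ in $w_n$ and $n+b+e_2$ in $w_m$), the classical limit from Wilson to continuous dual Hahn applied factorwise gives $b^{-(n+m)}W_{n,m}(x,y;a,b,c,d;e_2)\to d_n(a,e_2+iy,e_2-iy\vert x)\,d_m(n+a+e_2,c,d\vert y)$, which after the relabeling $c\to b$, $d\to c$ is exactly $D_{n,m}(a,b,c,e_2;x,y)$ of \eqref{eq:vcdhp1}. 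The key structural observation is that the operators $\mathbf{D}_x,\mathbf{S}_x,\mathbf{D}_y,\mathbf{S}_y$ act only on the variables $x,y$ and are independent of $b$; hence every operator product $\mathcal{O}$ appearing in \eqref{eqdiv-diff-biv-wil} commutes with multiplication by the scalar $b^{-(n+m)}$ and with the limit $b\to\infty$. Because $W_{n,m}$ has degree $n$ in $x^2$ and $m$ in $y^2$ uniformly in $b$, the convergence of the normalised polynomial is coefficientwise on a fixed finite-dimensional space, so $b^{-(n+m)}\,\mathcal{O}\,W_{n,m}\to \mathcal{O}\,D_{n,m}$ for each such product.

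Next I would analyse the coefficients. Inspecting the explicit expressions of $f_1,\dots,f_8$ in Proposition \ref{prop:divided_biv_Wilson} shows that each $f_i$ is a polynomial of degree exactly one in $b$, and that the eigenvalue $(m+n)(2e_2+a+b+c+d+m+n-1)$ is likewise of degree one in $b$ with leading coefficient $m+n$. Consequently every summand of \eqref{eqdiv-diff-biv-wil} scales like $b^{\,n+m+1}$: a degree-one coefficient times a quantity of size $b^{\,n+m}$. Dividing the whole equation by $b^{\,n+m+1}$ and letting $b\to\infty$ therefore produces a finite, nontrivial limit term by term: $f_i/b$ tends to its leading coefficient $\tilde f_i$ (a polynomial in $x,y$ and the remaining parameters), $b^{-(n+m)}\,\mathcal{O}\,W_{n,m}\to\mathcal{O}\,D_{n,m}$, and $(m+n)(2e_2+a+b+c+d+m+n-1)/b\to m+n$. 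The resulting identity is a fourth-order divided-difference equation for the limit polynomial with coefficients $\tilde f_i$ and eigenvalue $m+n$, precisely the shape of \eqref{eqdiv-diff-biv-CDH}.

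Finally I would perform the relabeling $c\to b$, $d\to c$ and check that the extracted leading coefficients $\tilde f_i$ coincide with the $f_i$ listed in the statement; for instance the coefficient of $b$ in the Wilson coefficient $f_8$ is $-y^2+(c+d)e_2+ad+ac+dc$, which after relabeling becomes $-y^2+(b+c)e_2+ab+ac+bc$, matching the stated $f_8$ of \eqref{eqdiv-diff-biv-CDH}. I expect the only genuine work to be this last bookkeeping together with the verification that every Wilson coefficient is truly of degree one in $b$, so that no term is lost or blows up in the limit; both are finite, mechanical computations rather than conceptual difficulties, the conceptual content being entirely the commutation of the $b$-free divided-difference operators with the limiting process.
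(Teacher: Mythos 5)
Your proposal is correct and follows exactly the route the paper takes: the paper obtains \eqref{eqdiv-diff-biv-CDH} by dividing the bivariate Wilson polynomials \eqref{eq:bwp} by $b^{n+m}$, letting $b\to\infty$, relabeling $c\to b$, $d\to c$, and transferring this limit to the divided-difference equation \eqref{eqdiv-diff-biv-wil}, whose coefficients and eigenvalue are indeed all of degree one in $b$. Your write-up merely makes explicit the scaling and commutation arguments that the paper leaves implicit.
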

The divided-difference equation \eqref{eqdiv-diff-biv-CDH} is equivalent to a difference equation of the form \eqref{eq:diffeqWilson} in which the $f_i$, $i=1,\ldots,8$ are those of Proposition \ref{prop:divided_bivCDH}.

The following divided-derivative of the continuous dual Hahn polynomials is valid:
\[\mathbf{D}_xD_{n,m}(a,b,c,e_2;x,y)=-nD_{n-1,m}(a+1/2,b,c,e_2+1/2;x,y).\]
We deduce from this relation that $D_{n,m}^{(1,0)}(x,y):=\mathbf{D}_xD_{n,m}(x,y)$ is solution of the fourth-order linear partial divided-difference equation
\begin{multline*}
f_{11}(x,y) \mathbf{D}^2_x\mathbf{D}^2_y D_{n,m}^{(1,0)}(x,y)
+ f_{21}(x,y)\mathbf{S}_x\mathbf{D}_x\mathbf{D}^2_y D_{n,m}^{(1,0)}(x,y)
+f_{31}(x,y)\mathbf{S}_y\mathbf{D}_y\mathbf{D}^2_x
D_{n,m}^{(1,0)}(x,y)\\
+f_{41}(x,y)\mathbf{S}_x\mathbf{D}_x\mathbf{S}_y\mathbf{D}_y D_{n,m}^{(1,0)}(x,y)
+ f_{51}(x)\mathbf{D}^2_x
D_{n,m}^{(1,0)}(x,y)+f_{61}(y)\mathbf{D}^2_y
D_{n,m}^{(1,0)}(x,y)+f_{71}(x) \mathbf{S}_x\mathbf{D}_x
D_{n,m}^{(1,0)}(x,y) \\
+f_{81}(y)\mathbf{S}_y\mathbf{D}_y D_{n,m}^{(1,0)}(x,y)+(m+n-1)D_{n,m}^{(1,0)}(x,y)=0,
\end{multline*}
where $f_{1i}=f_i(a+1/2,b,c,e_2+1/2),\, i=1,\ldots,8$ with $f_i=f_i(a,b,c,e_2)$ given in Proposition \ref{prop:divided_bivCDH}. It also follows that the continuous dual Hahn polynomials $D_{n,m}(a,b,c,e_2;x,y):=D_{n,m}(x,y)$ are solution of the second-order divided-difference equation
\[
\Big(\left( -a-2\,{e_2} \right) {x}^{2}+{y}^{2}a+ae_{2}^{2}\Big)\mathbf{D}^2_xD_{n,m}(x,y)+\Big({x}^{2}-{y}^{2}-2\,a{e_2}-e_{2}^{2}\Big)\mathbf{S}_x\mathbf{D}_x D_{n,m}(x,y)-nD_{n,m}(x,y)=0.
\]

\subsubsection{Coefficients of the three-term recurrence relations satisfied by bivariate continuous dual Hahn polynomials and the family of monic bivariate continuous dual Hahn polynomials}

The column vectors of bivariate continuous dual Hahn polynomials satisfy a three-term recurrence relation of the form \eqref{RRTT}. The coefficients of the matrices can be obtained by using the same procedure as described for bivariate Racah polynomials, by using \eqref{eq:matricesgp} with $\lambda_{n}=n$.
\begin{proposition}
The matrices ${\mathbf{S}}_{n}$ of size $(n+1)\times n$ and $\mathbf{T}_n$ of size $(n+1)\times (n-1)$ have the same structure as in \eqref{eq:sn} and \eqref{eq:tn} respectively, and their coefficients are given in terms of the polynomial coefficients of the equation \eqref{eqdiv-diff-biv-CDH} as
\begin{align*}
s_{k,k}&=-\frac{1}{6} (k-n-1) \left(6 e_2 \left(2 a+b+c+e_2+2 n-2\right)+6 a (b+c+k+n-2)+6 b (c+n-1) \right. \\
& \left. +n (6 c+4 k-13)-6 c-2 k^2+k+4 n^2+6\right), \phantom{aaaaaaaaaaaaaaaaaaaaaaaaa} k=1,\dots,n, 
\end{align*}
\begin{align*}
s_{k+1,k}&=\frac{1}{6} k \left(6 a (b+c+k-1)+6 e_2 (b+c+k-1)+6 b (c+n-1) \right. \\
& \left. +6 n (c+k-1)-6 c-2 k^2-3 k+5\right), \phantom{aaaaaaaaaaaaaaaaaaaaaaaaaaaaaa} k=1,\dots,n,  \\
t_{k,k}&=\frac{1}{30} (n-k) (n-k+1) \left(-10 (k-n) (k-n+1) \left(a+b+c+2 e_2\right)+5 (k-1) (4 k-4 n+1) \right. \\
& \left. \times \left(2a+b+c+2 e_2+1\right)-5 (-4 k+4 n-1) \left(e_2 \left(2 a+b+c+e_2\right)+a (b+c)+b c\right) \right. \\
& \left. -30 a (k-1) \left(b+c+2 e_2+1\right)-30 a \left(b+e_2\right) \left(c+e_2\right)-30 a (k-2) (k-1)+4 k^3+8 k^2 n \right. \\
& \left. -46 k^2-8 k n^2+22 k n+49 k-4 n^3+29 n^2-64 n+9\right), \phantom{aaaaaaaaaaaaaaaaa} k=1,\dots,n-1, \\
t_{k+1,k}&=-\frac{1}{6} k (k-n) \left(2 e_2 \left(-6 a (b+c+k-1)-3 e_2 (b+c+k-1)-6 n (b+c+k-1)-6 b c \right. \right. \\
& \left. \left. +9 b+9 c+2 k^2+6 k-8\right)-2 a \left(3 k (b+c-2)+6 b c-6 b-6 c+k^2+5\right)-6 a n (b+c+k-1) \right. \\
& \left.-4 n^2 (b+c+k-1)+n (-4 k (b+c)+3 b (5-4 c)+15 c+13 (k-1))+6 b c k+12 b c+4 b k^2 \right. \\
& \left. -10 b+4 c k^2-10 c+2 k^3-5 k^2-5 k+8\right), \phantom{aaaaaaaaaaaaaaaaaaaaaaaaaaa} k=1,\dots,n-1, \\
t_{k+2,k}&=\frac{1}{30} k (k+1) \left(-10 (k-1) k \left(a+b+c+e_2\right)-5 (4 k-1) \left(a (b+c)+e_2 (b+c)+b c\right) \right. \\
& \left. -30 b c \left(a+e_2\right)+30 b c (k-n+1)+5 (4 k-1) (b+c) (k-n+1) \right. \\
& \left. +(k-1) (k (6 k-10 n+15)+1)\right), \phantom{aaaaaaaaaaaaaaaaaaaaaaaaaaaaaaaa} k=1,\dots,n-1.
\end{align*}
\end{proposition}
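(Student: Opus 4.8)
The plan is to apply to the bivariate continuous dual Hahn polynomials the very same procedure already developed for the bivariate Racah polynomials in Section \ref{sec:3}. The starting point is that the family $D_{n,m}(x,y)$ satisfies the fourth-order linear partial divided-difference equation \eqref{eqdiv-diff-biv-CDH}, which has exactly the same operator structure as \eqref{eqdiv-diff-biv}, only with the Wilson operators $\mathbf{D}_x,\mathbf{S}_x,\mathbf{D}_y,\mathbf{S}_y$ in place of $\mathbb{D}_x,\mathbb{S}_x,\mathbb{D}_y,\mathbb{S}_y$ and with the explicit polynomial coefficients $f_i$ listed in Proposition \ref{prop:divided_bivCDH}. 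First I would substitute the expansion \eqref{EXPP1} of $\mathbf{P}_n$ in the monic basis $\mathbf{F}_n$ (here the basis obtained from \eqref{eq:baseff1}--\eqref{eq:baseff2} by setting $\beta_1=\beta_2=0$) into \eqref{eqdiv-diff-biv-CDH}. Collecting the coefficients of $\mathbf{F}_{n-1}$ and $\mathbf{F}_{n-2}$ then produces the relations \eqref{eq:matricesgp}, now with the eigenvalue $\lambda_n=n$, in which the matrices $\mathbf{S}_n$ and $\mathbf{T}_n$ appear. The key observation is that $\mathbf{S}_n$ and $\mathbf{T}_n$ are determined entirely by the coefficients of the equation and by the action of the operators on the basis, independently of which polynomial solution is considered; hence the whole task reduces to evaluating their entries.

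To carry out that evaluation I would use the analogues for the Wilson operators of the basis identities \eqref{1}, \eqref{3} and \eqref{2}, together with the commutation and product rules for $\mathbf{D}_x,\mathbf{S}_x$ recalled after Proposition \ref{prop:divided_biv_Wilson}. Applying each of the eight operator blocks $E_{(l_1,l_2)}$ to a basis vector $F_{n-k}(x)F_k(y)$ lowers the total degree by $l_1+l_2$ and, after multiplication by the corresponding $f_i$, spreads the result over the levels $n$, $n-1$ and $n-2$. Summing the level-$(n-1)$ contributions of all eight terms yields the bidiagonal matrix $\mathbf{S}_n$ of \eqref{eq:sn}, and summing the level-$(n-2)$ contributions yields the banded matrix $\mathbf{T}_n$ of \eqref{eq:tn}; substituting the specific $f_i$ of Proposition \ref{prop:divided_bivCDH} and reading off the diagonals gives the closed forms for $s_{k,k}$, $s_{k+1,k}$, $t_{k,k}$, $t_{k+1,k}$ and $t_{k+2,k}$ claimed in the statement.

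As an independent check one may instead obtain these formulas by the limiting process that defines the continuous dual Hahn family: since \eqref{eqdiv-diff-biv-CDH} is the $b\to\infty$ limit (after the relabeling $c\to b$, $d\to c$) of the bivariate Wilson equation \eqref{eqdiv-diff-biv-wil}, the corresponding limit of the matrices $\mathbf{S}_n$, $\mathbf{T}_n$ of the preceding proposition should reproduce the present ones, provided one keeps track of the $b^{n+m}$ normalization used in the limit.

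I expect the main obstacle to be purely computational bookkeeping rather than any conceptual difficulty: the mixed fourth-order terms $f_1E_{(2,2)}$, $f_2E_{(1,2)}$, $f_3E_{(2,1)}$ and $f_4E_{(1,1)}$ generate, through repeated application of the Wilson operators to the products $F_{n-k}(x)F_k(y)$, a proliferation of off-diagonal contributions, and one must verify that after summation all entries outside the stated two diagonals of $\mathbf{S}_n$ and the stated three diagonals of $\mathbf{T}_n$ cancel exactly, leaving precisely the polynomial expressions in $a,b,c,e_2,n,k$ displayed above.
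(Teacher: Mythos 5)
Your proposal follows exactly the paper's route: the paper itself only remarks that the coefficients are obtained "by using the same procedure as described for bivariate Racah polynomials, by using \eqref{eq:matricesgp} with $\lambda_n=n$", i.e. substituting the expansion \eqref{EXPP1} in the basis $\mathbf{F}_n$ (with $\beta_1=\beta_2=0$) into \eqref{eqdiv-diff-biv-CDH} and equating coefficients of $\mathbf{F}_{n-1}$ and $\mathbf{F}_{n-2}$, which is precisely what you describe. Your additional consistency check via the $b\to\infty$ limit of the bivariate Wilson matrices is a sensible extra, but the core argument coincides with the paper's.
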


The matrices of leading coefficients of continuous dual Hahn polynomials defined in \eqref{eq:vcdhp1} $D_{n,m}(a,b,c,e_2;x,y)$ are given by
\begin{equation}\label{eq:gnnbr7}
G_{n,n}=G_{n,n}(a,b,c,e_2)=
\begin{pmatrix}
g_{i,j}(a,b,c,e_2)
\end{pmatrix}_{0 \leq r,s \leq n},
\end{equation}
where
\begin{align*}
g_{r,s}(a,b,c,e_2)=\begin{cases}
\displaystyle{0}, & r > s, \\[3mm]
\displaystyle{(-1)^{n-r-s} \binom{n-r}{s-r}},& r \leq s.
\end{cases}
\end{align*}
As a consequence, bivariate continuous dual Hahn polynomials can be generated from the three-term recurrence relations they satisfy by using Theorem \ref{eq:matricesTTRR}, where for this specific family we have $x_{1}(x)=x^2$ and $y_{2}(y)=y^2$. Moreover, if we consider $G_{n,n}$ as the identity matrix we can introduce the family of monic bivariate continuous dual Hahn polynomials by using Corollary \ref{eq:matricesTTRRmonic}.

\subsection{Linear partial divided-difference equation of the bivariate continuous Hahn polynomials}
The continuous Hahn polynomials $h_n(a,b,c,d \vert x)$ are obtained by transforming \cite{MR1123596}
\[
a\to a+\frac{1}{2}i\epsilon,\quad
b\to b-\frac{1}{2}i\epsilon,\quad
c\to c+\frac{1}{2}i\epsilon,\quad
 d\to d-\frac{1}{2}i\epsilon,\quad
 x\to x-\frac{1}{2}\epsilon,
 \]
dividing \eqref{eq:wilson} by $\epsilon^n$ and then taking the limit $\epsilon\to \infty$. The resulting polynomials are
\begin{equation}\label{eq:unimissing}
h_n(a,b,c,d\vert x)=i^n(a+b)_n(a+d)_n\hyper{3}{2}{-n,n+a+b+c+d-1,a+ix}{a+b,a+d}{1}.
\end{equation}
Bivariate continuous Hahn polynomials can also be obtained from the bivariate Wilson families \eqref{eq:bwp} and \eqref{eq:Wilsonbar} by transforming \cite{MR1123596}
\begin{equation}\label{eq:WilsontoCH}
\begin{cases}
\displaystyle{a\to a_1+\frac{1}{2}i\epsilon,\quad , b\to b_1-\frac{1}{2}i\epsilon,\quad c\to b_3+\frac{1}{2}i\epsilon,} \\[3mm]
\displaystyle{d\to a_3-\frac{1}{2}i\epsilon,\quad x\to x-\frac{1}{2}\epsilon,\quad y\to y-\frac{1}{2}\epsilon,}
\end{cases}
\end{equation}
dividing \eqref{eq:bwp} and \eqref{eq:Wilsonbar} by $\epsilon^{n+m}$ and then taking the limit $\epsilon \to \infty$. This yields the families
\begin{equation}\label{eq:bchp1}
H_{n,m}(a_1,e_2,a_3;b_1,b_3;x,y)=h_n(a_1,b_1,e_2-iy,e_2+iy \vert x)h_m(n+a_1+e_2,n+b_1+e_2,b_3,a_3 \vert y),
\end{equation}
and
\begin{equation}\label{eq:bchp2}
\bar{H}_{n,m}(a_1,e_2,a_3;b_1,b_3;x,y)= h_n(m+e_2+b_3,m+e_2+a_3,a_1,b_1 \vert x) h_m(b_3,a_3,e_2-ix,e_2+ix \vert y).
\end{equation}
Whereas the Wilson operator $\mathbf{D}_x$ is appropriate for the Wilson and the bivariate Wilson polynomials, the corresponding operator for the continuous Hahn and the bivariate continuous Hahn polynomials is the operator \cite[p. 436]{NIST2010}
\[
\delta_x=\frac{f(x+{\frac{i}{2}})-f(x-{\frac{i}{2}})}{i}.
\]
The following limit relations between $\mathbf{D}_x$ and $\delta_x$ hold:
\begin{multline}\label{dx_Wilson_to_CH1}
\lim_{\epsilon\to\infty}\mathbf{D}_x\frac{W_{n,m}(x-\frac{1}{2}\epsilon,y-\frac{1}{2}\epsilon;a_1+\frac{1}{2}i\epsilon,b_1-\frac{1}{2}i\epsilon,b_3+\frac{1}{2}i\epsilon,a_3-\frac{1}{2}i\epsilon;e_2)}{\epsilon^{n+m-1}}
\\
=-\delta_x H_{n,m}(a_1,e_2,a_3;b_1,b_3;x,y),
\end{multline}
\begin{multline}
\lim_{\epsilon\to\infty}\mathbf{S}_x\frac{W_{n,m}(x-\frac{1}{2}\epsilon,y-\frac{1}{2}\epsilon;a_1+\frac{1}{2}i\epsilon,b_1-\frac{1}{2}i\epsilon,b_3+\frac{1}{2}i\epsilon,a_3-\frac{1}{2}i\epsilon;e_2)}{\epsilon^{n+m}} \\
=\mathbf{S}_x H_{n,m}(a_1,e_2,a_3;b_1,b_3;x,y),
\end{multline}
\begin{multline}
\lim_{\epsilon\to\infty}\mathbf{D}_y\frac{W_{n,m}(x-\frac{1}{2}\epsilon,y-\frac{1}{2}\epsilon;a_1+\frac{1}{2}i\epsilon,b_1-\frac{1}{2}i\epsilon,b_3+\frac{1}{2}i\epsilon,a_3-\frac{1}{2}i\epsilon;e_2)}{\epsilon^{n+m-1}} \\
=-\delta_y H_{n,m}(a_1,e_2,a_3;b_1,b_3;x,y),
\end{multline}
\begin{multline}\label{sy_Wilson_to_CH1}
\lim_{\epsilon\to\infty}\mathbf{S}_y\frac{W_{n,m}(x-\frac{1}{2}\epsilon,y-\frac{1}{2}\epsilon;a_1+\frac{1}{2}i\epsilon,b_1-\frac{1}{2}i\epsilon,b_3+\frac{1}{2}i\epsilon,a_3-\frac{1}{2}i\epsilon;e_2)}{\epsilon^{n+m}} \\
=\mathbf{S}_y H_{n,m}(a_1,e_2,a_3;b_1,b_3;x,y).
\end{multline}
Applying the transformations \eqref{eq:WilsontoCH} to  \eqref{eqdiv-diff-biv-wil} and using \eqref{dx_Wilson_to_CH1}--\eqref{sy_Wilson_to_CH1}, it follows that
\begin{proposition}\label{divided-difference_eq_CH}
The bivariate continuous Hahn polynomials $H_{n,m}(x,y):=H_{n,m}(a_1,e_2,a_3;b_1,b_3;x,y)$ and ${H}_{n,m}(x,y):=\bar{H}_{n,m}(a_1,e_2,a_3;b_1,b_3;x,y)$ are solution of the fourth-order linear partial divided-difference equation
\begin{multline}\label{eqdiv-diff-biv-CH}
f_1(x,y) \delta^2_x\delta^2_yH_{n,m}(x,y) + f_2(x,y)\mathbf{S}_x\delta_x\delta^2_yH_{n,m}(x,y)
+f_3(x,y)\mathbf{S}_y\delta_y\delta^2_xH_{n,m}(x,y)\\+f_4(x,y) \mathbf{S}_x\delta_x\mathbf{S}_y\delta_y H_{n,m}(x,y)
+ f_5(x)\delta^2_x H_{n,m}(x,y)+f_6(y)\delta^2_y H_{n,m}(x,y)+f_7(x) \mathbf{S}_x\delta_xH_{n,m}(x,y)\\
+f_8(y)\mathbf{S}_y\delta_y H_{n,m}(x,y)
+\left( n+m \right)  \left( a_{{1}}-1+2\,e_{{2}}+b_{{3}}+a_{{3}}+b_{{1}}+m+n \right) H_{n,m}(x,y)=0,
\end{multline}
where
\begin{align*}
f_8(y)&=i \left( a_{{1}}b_{{3}}+e_{{2}}b_{{3}}-e_{{2}}a_{{3}}-b_{{1}}a_{{3}} \right) + \left( -a_{{1}}-b_{{1}}-2\,e_{{2}}-b_{{3}}-a_{{3}} \right) y,\\
f_7(x)&=i \left( a_{{1}}b_{{3}}-b_{{1}}a_{{3}}-b_{{1}}e_{{2}}+a_{{1}}e_{{2}} \right) + \left( -a_{{1}}-b_{{1}}-2\,e_{{2}}-b_{{3}}-a_{{3}} \right) x, \\
f_6(y)&=\frac{1}{2}\,a_{{1}}b_{{3}}+\frac{1}{2}\,e_{{2}}b_{{3}}+\frac{1}{2}\,e_{{2}}a_{{3}}+\frac{1}{2}\,b_{{1}}a_{{3}}-{y}^{2}+\frac{1}{2}\,i \left( a_{{1}}+b_{{3}}-a_{{3}}-b_{{1}} \right) y, \\
f_5(x)&=\frac{1}{2}\,a_{{1}}b_{{3}}+\frac{1}{2}\,b_{{1}}a_{{3}}+\frac{1}{2}\,a_{{1}}e_{{2}}+\frac{1}{2}\,b_{{1}}e_{{2}}+\frac{1}{2}\,i \left( a_{{1}}+b_{{3}}-a_{{3}}-b_{{1}} \right) x-{x}^{2}, \\
f_4(x,y)&=a_{{1}}b_{{3}}+b_{{1}}a_{{3}}-2\,xy-i \left( -b_{{3}}+a_{{3}} \right)x+i \left( -b_{{1}}+a_{{1}} \right) y, \\
f_3(x,y)&=-\frac{1}{2}\,i \left( a_{{1}}b_{{3}}-b_{{1}}a_{{3}} \right) + \left( \frac{1}{2}\,b_{{3}}+\frac{1}{2}\,a_{{3}} \right) x+ \left( \frac{1}{2}\,a_{{1}}+\frac{1}{2}\,b_{{1}} \right)y, \\
f_2(x,y)&=-\frac{1}{2}\,i \left( a_{{1}}b_{{3}}-b_{{1}}a_{{3}} \right) + \left( \frac{1}{2}\,b_{{3}}+\frac{1}{2}\,a_{{3}} \right) x+ \left( \frac{1}{2}\,a_{{1}}+\frac{1}{2}\,b_{{1}} \right)y, \\
f_1(x,y)&=-1/4\,a_{{1}}b_{{3}}-1/4\,b_{{1}}a_{{3}}+\frac{1}{2}\,xy+1/4\,i \left( -b_{{3}}+a_{{3}} \right) x-1/4\,i \left( -b_{{1}}+a_{{1}} \right) y.
\end{align*}
\end{proposition}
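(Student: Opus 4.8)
The plan is to obtain \eqref{eqdiv-diff-biv-CH} as a confluent limit of the bivariate Wilson divided-difference equation \eqref{eqdiv-diff-biv-wil}, which by Proposition \ref{prop:divided_biv_Wilson} holds identically in the parameters. First I would substitute the transformation \eqref{eq:WilsontoCH}, namely $a\to a_1+\tfrac{i\epsilon}{2}$, $b\to b_1-\tfrac{i\epsilon}{2}$, $c\to b_3+\tfrac{i\epsilon}{2}$, $d\to a_3-\tfrac{i\epsilon}{2}$, $x\to x-\tfrac{\epsilon}{2}$, $y\to y-\tfrac{\epsilon}{2}$, into the whole equation \eqref{eqdiv-diff-biv-wil} satisfied by $W_{n,m}$, and then divide through by $\epsilon^{n+m}$. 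The normalization $\epsilon^{n+m}$ is dictated by the zeroth-order term: under \eqref{eq:WilsontoCH} the combination $a+b+c+d$ tends to $a_1+b_1+b_3+a_3$ (the imaginary shifts cancel), so the Wilson eigenvalue $(m+n)(2e_2+a+b+c+d+m+n-1)$ stays finite and converges to $(n+m)(a_1+b_1+b_3+a_3+2e_2+m+n-1)$, matching the zeroth-order coefficient of \eqref{eqdiv-diff-biv-CH}, while $W_{n,m}(\text{transformed})/\epsilon^{n+m}\to H_{n,m}$ by the defining limit \eqref{eq:bchp1}.

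Next I would pass to the limit $\epsilon\to\infty$ term by term. For the operator-on-polynomial factors I would use iterated versions of the limit relations \eqref{dx_Wilson_to_CH1}--\eqref{sy_Wilson_to_CH1}: each application of $\mathbf{D}_x$ or $\mathbf{D}_y$ converts to $-\delta_x$ or $-\delta_y$ and lowers the $\epsilon$-power by one, whereas each $\mathbf{S}_x$ or $\mathbf{S}_y$ converts to itself and leaves the $\epsilon$-power unchanged. Since the transformed $W_{n,m}/\epsilon^{n+m}$ converges to $H_{n,m}$ coefficientwise and the operators are fixed finite-difference operators, applying an operator and taking the limit commute, so these iterations are justified; the factor carrying $d_i$ copies of $\mathbf{D}$ therefore scales like $\epsilon^{n+m-d_i}$ and, after dividing by $\epsilon^{n+m-d_i}$, tends to $(-1)^{d_i}$ times the corresponding continuous Hahn operator applied to $H_{n,m}$. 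For $\bar{H}_{n,m}$ the argument is verbatim the same, since $\bar{W}_{n,m}$ satisfies the same equation \eqref{eqdiv-diff-biv-wil} and $\bar{H}_{n,m}$ arises from $\bar{W}_{n,m}$ through \eqref{eq:WilsontoCH} and \eqref{eq:bchp2}.

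The matching of coefficients is then a bookkeeping of powers of $\epsilon$: writing $f_i^{CH}$ for the coefficients of \eqref{eqdiv-diff-biv-CH} to distinguish them from the Wilson coefficients $f_i$, for the $i$-th term to survive after division by $\epsilon^{n+m}$ the transformed $f_i$ must carry exactly $\epsilon^{d_i}$, and its leading coefficient, multiplied by the operator sign $(-1)^{d_i}$, must equal $f_i^{CH}$. I expect this last verification to be the main obstacle. The subtlety is that, viewed as polynomials in $x,y$, the Wilson coefficients have total degree strictly larger than $d_i$ (for instance $f_1$ contains $x^4y^2$ and $x^2y^4$, naively of order $\epsilon^6$, while $d_1=4$), so one must check that the top $\epsilon$-powers $\epsilon^{d_i+1},\epsilon^{d_i+2},\dots$ cancel identically after the substitution \eqref{eq:WilsontoCH}. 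This cancellation is not formal; it reflects the precise algebraic form of the coefficients in \eqref{eqdiv-diff-biv-wil} and must be confirmed by expanding each $f_i$ in $\epsilon$ and verifying that the leading surviving term reproduces exactly the expression listed for $f_i^{CH}$, a computation most safely carried out with computer algebra. Once these eight leading-coefficient identities are established, collecting the eight operator limits together with the zeroth-order term yields \eqref{eqdiv-diff-biv-CH} for both $H_{n,m}$ and $\bar{H}_{n,m}$.
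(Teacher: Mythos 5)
Your proposal is correct and follows essentially the same route as the paper, which likewise obtains \eqref{eqdiv-diff-biv-CH} by applying the substitution \eqref{eq:WilsontoCH} to the Wilson equation \eqref{eqdiv-diff-biv-wil}, dividing by the appropriate power of $\epsilon$, and invoking the operator limit relations \eqref{dx_Wilson_to_CH1}--\eqref{sy_Wilson_to_CH1}. The $\epsilon$-power bookkeeping and the cancellation of the spurious top powers in the transformed coefficients that you flag are exactly the (computer-algebra) verification implicit in the paper's one-line argument.
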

Equation \eqref{eqdiv-diff-biv-CH} is equivalent to the difference equation
\begin{multline*}
F_1 H_{n,m}(x+i,y+i)+F_2 H_{n,m}(x+i,y-i)+F_3 H_{n,m}(x-i,y+i)\\
+F_4 H_{n,m}(x-i,y-i)+F_5 H_{n,m}(x+i,y)+F_6 H_{n,m}(x,y+i)\\
+F_7 H_{n,m}(x-i,y)+F_8 H_{n,m}(x,y-i)+F_9 H_{n,m}(x,y)=0,
\end{multline*}
with
\[F_1=f_1+{\frac{i}{2}}(f_2+f_3)-\frac{1}{4}f_4, \quad F_2=f_1+{\frac{i}{2}}(f_2-f_3)+\frac{1}{4}f_4,F_3=f_1-{\frac{i}{2}}(f_2-f_3)+\frac{1}{4}f_4,\]
\[ F_4=f_1-{\frac{i}{2}}(f_2+f_3)-\frac{1}{4}f_4, F_5=-2f_1-f_5-if_2-\frac{1}{2}if_7,\quad F_6=-2f_1-if_3-f_6-\frac{1}{2}if_8, \]
\[F_7=-2f_1+if_2-f_5+\frac{1}{2}if_7,\quad F_8=if_3-2f_1-f_6+\frac{1}{2}if_8, \]
\[F_9=4f_1+2f_6+2f_5+\left( n+m \right)  \left( a_{{1}}-1+2\,e_{{2}}+b_{{3}}+a_{{3}}+b_{{1
}}+m+n \right).\]
The following partial differences of the bivariate continuous Hahn polynomials are valid:
\begin{multline*}
\delta_xH_{n,m}(a_1,e_2,a_3;b_1,b_3;x,y)=n(n+a_1+b_1+2e_2-1)H_{n-1,m}\left(a_1+\frac{1}{2},e_2+\frac{1}{2},a_3;b_1+\frac{1}{2},b_3;x,y \right),
\end{multline*}
\begin{multline*}
\delta_y\bar{H}_{n,m}(a_1,e_2,a_3;b_1,b_3;x,y)\\=m(m+a_3+b_3+2e_2-1) \bar{H}_{n,m-1}\left(a_1,e_2+\frac{1}{2},a_3+\frac{1}{2};b_1,b_3+\frac{1}{2};x,y \right).
\end{multline*}
The immediate consequence of the above partial derivatives is
\begin{proposition}
  The partial difference-derivative of bivariate continuous Hahn polynomials
  \begin{align*}
H_{n,m}^{(1,0)}(x,y):=  \delta_xH_{n,m}(a_1,e_2,a_3;b_1,b_3;x,y), \\
H_{n,m}^{(0,1)}(x,y):=  \delta_yH_{n,m}(a_1,e_2,a_3;b_1,b_3;x,y),
  \end{align*}
 are respectively solution of the fourth-order linear partial divided-difference equations
\begin{multline*}
f_{11}(x,y) \delta^2_x\delta^2_y H_{n,m}^{(1,0)}(x,y)
+ f_{21}(x,y)\mathbf{S}_x\delta_x\delta^2_y H_{n,m}^{(1,0)}(x,y)
+f_{31}(x,y)\mathbf{S}_y\delta_y\delta^2_x
H_{n,m}^{(1,0)}(x,y)\\
+f_{41}(x,y)
\mathbf{S}_x\delta_x\mathbf{S}_y\delta_y H_{n,m}^{(1,0)}(x,y)
+ f_{51}(x)\delta^2_x
H_{n,m}^{(1,0)}(x,y)+f_{61}(y)\delta^2_y
H_{n,m}^{(1,0)}(x,y)+f_{71}(x) \mathbf{S}_x\delta_x H_{n,m}^{(1,0)}(x,y)\\
+f_{81}(y)\mathbf{S}_y\delta_y H_{n,m}^{(1,0)}(x,y)
+\left(m+n-1 \right)  \left( m+n+a_{{1}}+2\,e_{{2}}+b_{{3}}+a_{{3}}+b_{{1}} \right)H_{n,m}^{(1,0)}(x,y)=0
\end{multline*}
and
\begin{multline*}
f_{12}(x,y) \delta^2_x\delta^2_y H_{n,m}^{(0,1)}(x,y)
+ f_{22}(x,y)\mathbf{S}_x\delta_x\delta^2_y H_{n,m}^{(0,1)}(x,y)
+f_{32}(x,y)\mathbf{S}_y\delta_y\delta^2_x
H_{n,m}^{(0,1)}(x,y)\\
+f_{42}(x,y)
\mathbf{S}_x\delta_x\mathbf{S}_y\delta_y H_{n,m}^{(0,1)}(x,y)
+ f_{52}(x)\delta^2_x
H_{n,m}^{(0,1)}(x,y)+f_{62}(y)\delta^2_y
H_{n,m}^{(0,1)}(x,y)+f_{72}(x) \mathbf{S}_x\delta_x H_{n,m}^{(0,1)}(x,y)\\
+f_{82}(y)\mathbf{S}_y\delta_y H_{n,m}^{(0,1)}(x,y)
+\left(m+n-1 \right)  \left( m+n+a_{{1}}+2\,e_{{2}}+b_{{3}}+a_{{3}}+b_{{1}} \right) H_{n,m}^{(0,1)}(x,y)=0,
\end{multline*}
with  $f_{i1}=f_i(a_1+\frac{1}{2},e_2+\frac{1}{2},a_3,b_1+\frac{1}{2},b_3)$ and $f_{i2}=f_i(a_1,e_2+\frac{1}{2},a_3+\frac{1}{2},b_1,b_3+\frac{1}{2})$ for
$i=1,\ldots,8$ where the coefficients $f_i=f_i(a_1,e_2,a_3,b_1,e_3)$ are given in Proposition \ref{divided-difference_eq_CH}.
\end{proposition}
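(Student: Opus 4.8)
The plan is to obtain both asserted equations by exactly the limiting process that already produced the bivariate continuous Hahn equation \eqref{eqdiv-diff-biv-CH} from the Wilson equation \eqref{eqdiv-diff-biv-wil}, now applied one level higher to the two equations of Proposition~\ref{proposition:12}. By that proposition the polynomials $\mathbf{D}_xW_{n,m}$ and $\mathbf{D}_yW_{n,m}$ satisfy fourth-order divided-difference equations whose coefficients are $f_{i1}$ and $f_{i2}$, and (by the remark following the Wilson corollary) these are the parameter shifts $f_i(a+\frac{1}{2},b+\frac{1}{2},c,d,e_2+\frac{1}{2})$ and $f_i(a,b,c+\frac{1}{2},d+\frac{1}{2},e_2+\frac{1}{2})$ of the Wilson coefficients of Proposition~\ref{prop:divided_biv_Wilson}. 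First I would apply the transformations \eqref{eq:WilsontoCH} to each of these two equations, divide by the appropriate power of $\epsilon$, and let $\epsilon\to\infty$, invoking \eqref{dx_Wilson_to_CH1}--\eqref{sy_Wilson_to_CH1}. Under this limit $\mathbf{D}_x$ and $\mathbf{D}_y$ turn into $-\delta_x$ and $-\delta_y$ while $\mathbf{S}_x,\mathbf{S}_y$ persist, so that $\mathbf{D}_xW_{n,m}\to-H_{n,m}^{(1,0)}$ and $\mathbf{D}_yW_{n,m}\to-H_{n,m}^{(0,1)}$ and the two Wilson equations pass to the two equations claimed for $H_{n,m}^{(1,0)}$ and $H_{n,m}^{(0,1)}$.

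The coefficient identification then comes for free by tracking the Wilson parameter shifts through \eqref{eq:WilsontoCH}. Since \eqref{eq:WilsontoCH} sends $a\mapsto a_1+\frac{1}{2}i\epsilon$, $b\mapsto b_1-\frac{1}{2}i\epsilon$, $c\mapsto b_3+\frac{1}{2}i\epsilon$, $d\mapsto a_3-\frac{1}{2}i\epsilon$ and leaves $e_2$ fixed, the Wilson shift $(a,b,e_2)\mapsto(a+\frac{1}{2},b+\frac{1}{2},e_2+\frac{1}{2})$ becomes $(a_1,b_1,e_2)\mapsto(a_1+\frac{1}{2},b_1+\frac{1}{2},e_2+\frac{1}{2})$, giving $f_{i1}=f_i(a_1+\frac{1}{2},e_2+\frac{1}{2},a_3,b_1+\frac{1}{2},b_3)$, while the shift $(c,d,e_2)\mapsto(c+\frac{1}{2},d+\frac{1}{2},e_2+\frac{1}{2})$ becomes $(b_3,a_3,e_2)\mapsto(b_3+\frac{1}{2},a_3+\frac{1}{2},e_2+\frac{1}{2})$, giving $f_{i2}=f_i(a_1,e_2+\frac{1}{2},a_3+\frac{1}{2},b_1,b_3+\frac{1}{2})$, exactly as stated. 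The eigenvalues match for the same reason: the Wilson eigenvalue $(m+n-1)(a+b+c+d+2e_2+m+n)$ has its $\epsilon$-dependence cancel under \eqref{eq:WilsontoCH} because $a+b+c+d\mapsto a_1+b_1+b_3+a_3$, leaving $(m+n-1)(a_1+2e_2+b_3+a_3+b_1+m+n)$.

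A more transparent derivation of the first equation, which I would also record as a check, sidesteps the limit entirely: the difference-derivative relation $\delta_xH_{n,m}(a_1,e_2,a_3;b_1,b_3;x,y)=n(n+a_1+b_1+2e_2-1)H_{n-1,m}(a_1+\frac{1}{2},e_2+\frac{1}{2},a_3;b_1+\frac{1}{2},b_3;x,y)$ exhibits $H_{n,m}^{(1,0)}$ as a scalar multiple of a bivariate continuous Hahn polynomial with shifted parameters, which by Proposition~\ref{divided-difference_eq_CH} solves \eqref{eqdiv-diff-biv-CH} with those shifted parameters, that is with coefficients $f_{i1}$; the index shift $n\mapsto n-1$ together with the shifted parameters reproduces the eigenvalue $(m+n-1)(m+n+a_1+2e_2+b_3+a_3+b_1)$. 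The same shortcut is unavailable for $H_{n,m}^{(0,1)}$, because $\delta_y$ applied to the unbarred $H_{n,m}$ meets the $y$-dependent parameters $e_2\pm iy$ of its first Hahn factor and does not return a single shifted polynomial. I would resolve this by using that $H_{n,m}$ and $\bar{H}_{n,m}$ solve the one equation \eqref{eqdiv-diff-biv-CH}: applying $\delta_y$ to \eqref{eqdiv-diff-biv-CH} and simplifying with the product and commutation rules for $\delta_y,\mathbf{S}_y$ produces a single equation satisfied by $\delta_y u$ for every solution $u$, and evaluating on the barred family, whose relation $\delta_y\bar{H}_{n,m}=m(m+a_3+b_3+2e_2-1)\bar{H}_{n,m-1}(a_1,e_2+\frac{1}{2},a_3+\frac{1}{2};b_1,b_3+\frac{1}{2};x,y)$ does give a single shifted polynomial, pins its coefficients to $f_{i2}$.

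The hard part will be the bookkeeping of powers of $\epsilon$ in the primary route: I must check that each higher-order operator product (for instance $\mathbf{D}_x^2\mathbf{D}_y^2$ acting on $\mathbf{D}_xW_{n,m}$) carries precisely the power of $\epsilon$ needed to balance the drop in degree of the polynomial coefficients $f_i$ under $x\mapsto x-\frac{1}{2}\epsilon$, $y\mapsto y-\frac{1}{2}\epsilon$, so that a finite and nonzero limiting equation survives; this amounts to extending the first-order limits \eqref{dx_Wilson_to_CH1}--\eqref{sy_Wilson_to_CH1} to the products that actually appear. Since this is the identical mechanism already used to pass from \eqref{eqdiv-diff-biv-wil} to \eqref{eqdiv-diff-biv-CH} one degree lower, it introduces no new idea, and I would reuse that computation rather than redo it from scratch.
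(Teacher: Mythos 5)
Your proposal is correct, and it actually contains the paper's own argument as your ``check'': the paper offers no computation here, but simply states the two partial difference-derivative relations ($\delta_x H_{n,m}$ proportional to $H_{n-1,m}$ with parameters $(a_1+\tfrac12,e_2+\tfrac12,a_3;b_1+\tfrac12,b_3)$, and $\delta_y \bar H_{n,m}$ proportional to $\bar H_{n,m-1}$ with parameters $(a_1,e_2+\tfrac12,a_3+\tfrac12;b_1,b_3+\tfrac12)$) and declares the proposition an ``immediate consequence,'' which is exactly your routes two and three. Your primary route --- applying the transformation \eqref{eq:WilsontoCH} and the limits \eqref{dx_Wilson_to_CH1}--\eqref{sy_Wilson_to_CH1} to the two Wilson derivative equations of Proposition \ref{proposition:12} --- is a genuinely different but equally legitimate path; it buys uniformity (it is the same mechanism that produced \eqref{eqdiv-diff-biv-CH} from \eqref{eqdiv-diff-biv-wil}, so the parameter-shift and eigenvalue bookkeeping you carry out transfers verbatim), at the cost of the $\epsilon$-power balancing for the composite operators that you rightly flag as the delicate step. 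Two remarks to tighten your write-up. First, you correctly spot the asymmetry the paper glosses over: the stated $\delta_y$ relation is for the barred family only, so concluding anything about $\delta_y H_{n,m}$ requires the intermediate fact that applying $\delta_y$ to \eqref{eqdiv-diff-biv-CH} yields one well-defined equation satisfied by $\delta_y u$ for \emph{every} solution $u$; that equation's coefficients are determined by the operator calculus (as in Proposition \ref{proposition:12}), not by evaluation on a single family --- the barred family merely identifies those computed coefficients with $f_{i2}$, so phrase it as an identification rather than as ``pinning.'' Second, the sign flip $\mathbf{D}_x\mapsto-\delta_x$ in the limit is harmless since the equations are linear and homogeneous, but it is worth saying so explicitly.
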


\subsection{Trivariate continuous Hahn polynomials}

To illustrate the truthful of our conjectures presented in sections \ref{sec:conjracah} and \ref{sec:conjwilson}, we consider the case $p=3$ for the trivariate continuous Hahn polynomials defined by \cite{MR1123596}
\begin{multline*}
H_{n,m,r}(a_1,e_2,e_3,a_4;b_1,b_4;x,y,z)=h_n(a_1,b_1,e_2-iy,e_2+iy \vert x)\\
\times h_m(n+a_1+e_2,n+b_1+e_2,e_3-iz,e_3+iz \vert y) \\ \times
h_{r}(n+m+a_1+e_2+e_3,n+m+b_1+e_2+e_3,b_4,a_4 \vert z),
\end{multline*}
where the continuous Hahn polynomials $h_{j}$ are defined in \eqref{eq:unimissing}. It follows that
\begin{proposition}
The trivariate continuous Hahn polynomials
\[
H_{n,m,r}(x,y,z):=H_{n,m,r}(a_1,e_2,e_3,a_4;b_1,b_4;x,y,z)
\]
are solution of the six-order partial linear divided-difference equation with $3^3=27$ polynomial coefficients
\begin{multline*}
\left( n+m+r \right)  \left( n+m+r-1+a_{{1}}+2e_{{2}}+2e_{{3}}+a_{{4}}+
b_{{1}}+b_{{4}} \right)
H_{n,m,r}(x,y,z)+f_1\mathbf{S}_z\delta_zH_{n,m,r}(x,y,z)\\
+f_2\mathbf{S}_y\delta_yH_{n,m,r}(x,y,z)
+f_3\mathbf{S}_x\delta_xH_{n,m,r}(x,y,z)
+f_4\mathbf{S}_y\delta_y\mathbf{S}_z\delta_zH_{n,m,r}(x,y,z)+
f_5\mathbf{S}_x\delta_x\mathbf{S}_z\delta_zH_{n,m,r}(x,y,z) \\
+f_6\mathbf{S}_x\delta_x\mathbf{S}_y\delta_yH_{n,m,r}(x,y,z)
+f_7\delta^2_zH_{n,m,r}(x,y,z)+f_8\delta^2_yH_{n,m,r}(x,y,z)+f_9\delta^2_xH_{n,m,r}(x,y,z)\\
+f_{10}\mathbf{S}_x\delta_x\mathbf{S}_y\delta_y\mathbf{S}_z\delta_zH_{n,m,r}(x,y,z)
+f_{11}\mathbf{S}_y\delta_y\delta_z^2H_{n,m,r}(x,y,z)
+f_{12}\mathbf{S}_y\delta_y\delta_x^2H_{n,m,r}(x,y,z)\\
+f_{13}\mathbf{S}_z\delta_z\delta_y^2H_{n,m,r}(x,y,z)+f_{14}\mathbf{S}_z\delta_z\delta_x^2H_{n,m,r}(x,y,z)+
f_{15}\mathbf{S}_x\delta_x\delta_y^2H_{n,m,r}(x,y,z) \\
+f_{16}\mathbf{S}_x\delta_x\delta_z^2H_{n,m,r}(x,y,z)
+f_{17}\mathbf{S}_x\delta_x\mathbf{S}_y\delta_y\delta_z^2H_{n,m,r}(x,y,z)
+f_{18}\mathbf{S}_x\delta_x\mathbf{S}_z\delta_z\delta_y^2H_{n,m,r}(x,y,z) \\
+f_{19}\mathbf{S}_y\delta_y\mathbf{S}_z\delta_z\delta_x^2H_{n,m,r}(x,y,z)+f_{20}\delta^2_y\delta^2_zH_{n,m,r}(x,y,z)
+f_{21}\delta^2_x\delta^2_yH_{n,m,r}(x,y,z)\\
+f_{22}\delta^2_x\delta^2_zH_{n,m,r}(x,y,z)+
f_{23}\mathbf{S}_x\delta_x\delta^2_y\delta^2_zH_{n,m,r}(x,y,z)+f_{24}\mathbf{S}_y\delta_y\delta^2_x\delta^2_zH_{n,m,r}(x,y,z)\\
+f_{25}\mathbf{S}_z\delta_z\delta^2_x\delta^2_yH_{n,m,r}(x,y,z)+
f_{26}\delta^2_x\delta^2_y\delta^2_zH_{n,m,r}(x,y,z)=0,
\end{multline*}
where
\begin{align*}
f_1&=\left( -a_{{1}}-2\,{\it e_2}-2\,{\it e_3}-b_{{1}}-b_{{4}}-a_{{4}}
 \right) z-i \left( -b_{{4}}a_{{1}}+b_{{1}}a_{{4}}+{\it e_3}\,a_{{4}}-b
_{{4}}{\it e_3}+{\it e_2}\,a_{{4}}-b_{{4}}{\it e_2} \right)
,\\
f_2&=\left( -a_{{1}}-2\,{\it e_2}-2\,{\it e_3}-b_{{1}}-b_{{4}}-a_{{4}}
 \right) y-i \left( -a_{{1}}{\it e_3}+b_{{1}}{\it e_3}-b_{{4}}{\it e_2}-b
_{{4}}a_{{1}}+b_{{1}}a_{{4}}+{\it e_2}\,a_{{4}} \right)
,\\
f_3&=i \left( -b_{{1}}{\it e_2}+b_{{4}}a_{{1}}-b_{{1}}a_{{4}}+a_{{1}}{\it e_3
}-b_{{1}}{\it e_3}+a_{{1}}{\it e_2} \right) + \left( -a_{{1}}-2\,{\it e_2
}-2\,{\it e_3}-b_{{1}}-b_{{4}}-a_{{4}} \right) x
,\\
f_4&=\left( -2\,z-i \left( -b_{{4}}+a_{{4}} \right)  \right) y+i \left( a_
{{1}}-b_{{1}} \right) z+b_{{1}}a_{{4}}+{\it e_2}\,a_{{4}}+b_{{4}}a_{{1}
}+b_{{4}}{\it e_2}
, \\
f_5&=\left( -2\,z-i \left( -b_{{4}}+a_{{4}} \right)  \right) x+i \left( a_
{{1}}-b_{{1}} \right) z+b_{{1}}a_{{4}}+b_{{4}}a_{{1}}
, \\
 f_6&=\left( -2\,y-i \left( -b_{{4}}+a_{{4}} \right)  \right) x+i \left( a_
{{1}}-b_{{1}} \right) y+a_{{1}}{\it e_3}+b_{{1}}{\it e_3}+b_{{4}}a_{{1}}
+b_{{1}}a_{{4}},  \\
f_7&=1/2\,b_{{4}}a_{{1}}+1/2\,b_{{1}}a_{{4}}+1/2\,{\it e_2}\,a_{{4}}+1/2\,{\it e_3}\,a_{{4}}+1/2\,b_{{4}}{\it e_2}+1/2\,b_{{4}}{\it e_3}\\
&+1/2\,i \left( a_{{1}}-b_{{1}}-a_{{4}}+b_{{4}} \right) z-{z}^{2},\\
f_8&=1/2\,b_{{4}}a_{{1}}+1/2\,a_{{1}}{\it e_3}+1/2\,b_{{1}}{\it e_3}+1/2\,b_{{1}}a_{{4}}+{\it e_2}\,{\it e_3}\\
&+1/2\,{\it e_2}\,a_{{4}}+1/2\,b_{{4}}{\it e_2}+1/2\,i \left( a_{{1}}-b_{{1}}-a_{{4}}+b_{{4}} \right) y-{y}^{2} ,\\
f_9&=1/2\,a_{{1}}{\it e_2}+1/2\,b_{{4}}a_{{1}}+1/2\,a_{{1}}{\it e_3}+1/2\,b_{{1}}{\it e_2}+1/2\,b_{{1}}{\it e_3}\\
&+1/2\,b_{{1}}a_{{4}}+1/2\,i \left( a_{{1}}-b_{{1}}-a_{{4}}+b_{{4}} \right) x-{x}^{2},  \\
f_{10}&=\left( a_{{4}}+b_{{4}} \right) x+ \left( a_{{1}}+b_{{1}} \right) z-i\left( b_{{4}}a_{{1}}-b_{{1}}a_{{4}} \right), \\
 f_{11}&=\left( 1/2\,b_{{4}}+1/2\,a_{{4}} \right) y+ \left( 1/2\,a_{{1}}+{\it e_2}+1/2\,b_{{1}} \right) z+1/2\,i \left( b_{{1}}a_{{4}}+{\it e_2}\,a_{{4}}-b_{{4}}a_{{1}}-b_{{4}}{\it e_2} \right) ,\\
f_{12}&=\left( 1/2\,a_{{1}}+1/2\,b_{{1}} \right) y+ \left( 1/2\,a_{{4}}+1/2\,b_{{4}}+{\it e_3} \right) x-1/2\,i \left( -b_{{1}}{\it e_3}+b_{{4}}a_{{1}}-b_{{1}}a_{{4}}+a_{{1}}{\it e_3} \right) , \\
f_{13}&=\left( 1/2\,b_{{4}}+1/2\,a_{{4}} \right) y+ \left( 1/2\,a_{{1}}+{\it e_2}+1/2\,b_{{1}} \right) z+1/2\,i \left( b_{{1}}a_{{4}}+{\it e_2}\,a_{{4}}-b_{{4}}a_{{1}}-b_{{4}}{\it e_2} \right) ,\\
f_{14}&=\left( 1/2\,a_{{1}}+1/2\,b_{{1}} \right) z+ \left( 1/2\,b_{{4}}+1/2\, a_{{4}} \right) x-1/2\,i \left( b_{{4}}a_{{1}}-b_{{1}}a_{{4}} \right)
, \\
f_{15}&=\left( 1/2\,a_{{1}}+1/2\,b_{{1}} \right) y+ \left( 1/2\,a_{{4}}+1/2\,b_{{4}}+{\it e_3} \right) x-1/2\,i \left( -b_{{1}}{\it e_3}+b_{{4}}a_{{1}}-b_{{1}}a_{{4}}+a_{{1}}{\it e_3} \right) , \\
f_{16}&=\left( 1/2\,a_{{1}}+1/2\,b_{{1}} \right) z+ \left( 1/2\,b_{{4}}+1/2\,a_{{4}} \right) x-1/2\,i \left( b_{{4}}a_{{1}}-b_{{1}}a_{{4}} \right)
,  \\
f_{17}&=\left( 1/2\,i \left( -b_{{4}}+a_{{4}} \right) +z \right) x-1/2\,b_{{1}}a_{{4}}-1/2\,i \left( a_{{1}}-b_{{1}} \right) z-1/2\,b_{{4}}a_{{1}}
,\\
f_{18}&= \left( 1/2\,i \left( -b_{{4}}+a_{{4}} \right) +z \right) x-1/2\,b_{{1}}a_{{4}}-1/2\,i \left( a_{{1}}-b_{{1}} \right) z-1/2\,b_{{4}}a_{{1}}
,
\end{align*}
\begin{align*}
f_{19}&=\left( 1/2\,i \left( -b_{{4}}+a_{{4}} \right) +z \right) x-1/2\,b_{{1}}a_{{4}}-1/2\,i \left( a_{{1}}-b_{{1}} \right) z-1/2\,b_{{4}}a_{{1}}, \\
f_{20}&=\left( 1/2\,z+1/4\,i \left( -b_{{4}}+a_{{4}} \right)  \right) y-1/4\,{\it e_2}\,a_{{4}}-1/4\,b_{{1}}a_{{4}} \\
&-1/4\,i \left( a_{{1}}-b_{{1}}  \right) z-1/4\,b_{{4}}a_{{1}}-1/4\,b_{{4}}{\it e_2},  \\
f_{21}&=\left( 1/2\,y+1/4\,i \left( -b_{{4}}+a_{{4}} \right)  \right) x-1/4\,a_{{1}}{\it e_3}-1/4\,i \left( a_{{1}}-b_{{1}} \right) y\\
&-1/4\,b_{{1}}a_{{4}}-1/4\,b_{{4}}a_{{1}}-1/4\,b_{{1}}{\it e_3}, \\
f_{22}&=\left( 1/2\,z+1/4\,i \left( -b_{{4}}+a_{{4}} \right)  \right) x-1/4\,b_{{1}}a_{{4}}-1/4\,i \left( a_{{1}}-b_{{1}} \right) z-1/4\,b_{{4}}a_{
{1}},\\
f_{23}&=\left( -1/4\,a_{{4}}-1/4\,b_{{4}} \right) x+ \left( -1/4\,a_{{1}}-1/4 \,b_{{1}} \right) z+1/4\,i \left( b_{{4}}a_{{1}}-b_{{1}}a_{{4}}
 \right) ,\\
 f_{24}&=\left( -1/4\,a_{{4}}-1/4\,b_{{4}} \right) x+ \left( -1/4\,a_{{1}}-1/4\,b_{{1}} \right) z+1/4\,i \left( b_{{4}}a_{{1}}-b_{{1}}a_{{4}}
 \right),\\
 f_{25}&=\left( -1/4\,a_{{4}}-1/4\,b_{{4}} \right) x+ \left( -1/4\,a_{{1}}-1/4\,b_{{1}} \right) z+1/4\,i \left( b_{{4}}a_{{1}}-b_{{1}}a_{{4}}
 \right),\\
 f_{26}&= \left( -1/8\,i \left( -b_{{4}}+a_{{4}} \right) -1/4\,z \right) x+1/8 \,b_{{4}}a_{{1}}+1/8\,b_{{1}}a_{{4}}+1/8\,i \left( a_{{1}}-b_{{1}}
 \right) z.
\end{align*}
\end{proposition}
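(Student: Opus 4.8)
The plan is to obtain the trivariate equation by the same chain of transformations that produced the bivariate continuous Hahn equation \eqref{eqdiv-diff-biv-CH}, now carried out for $p=3$. First I would write down the divided-difference equation satisfied by the trivariate Racah polynomials, which is the instance $p=3$ of the conjectural equation \eqref{conjectRacah}. Starting from the second-order (in each variable) Geronimo--Iliev difference operator of \cite{MR2784425}, which carries rational coefficients and has $R_{\pmb n}$ as eigenfunction with the eigenvalue of \cite[Theorem 3.6]{MR2784425}, I would repeat the mechanism of Theorem \ref{eq:theorem1}: express the $3^3=27$ operator monomials $E_{(l_1,l_2,l_3)}R_{\pmb n}$, with $l_i\in\{0,1,2\}$, as linear combinations of the $27$ shifted evaluations $R_{\pmb n}(x_1+\ell_1,x_2+\ell_2,x_3+\ell_3)$ with $\ell_i\in\{-1,0,1\}$, invert this $27\times 27$ system, and substitute back, checking that every coefficient collapses to a polynomial in the lattices $y_i(x_i)=x_i(x_i+\beta_i)$ of the degree prescribed by the conjecture.

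Second, I would push this trivariate Racah equation through the two limit stages already used in the paper. Applying the $p=3$ analogue of the Racah-to-Wilson change of variables \eqref{eq:ractowil} turns the $\mathbb{D}_{x_i},\mathbb{S}_{x_i}$ operators into the Wilson operators $\mathbf{D}_{x_i},\mathbf{S}_{x_i}$ and yields the trivariate Wilson equation. Then applying the Wilson-to-continuous-Hahn substitution \eqref{eq:WilsontoCH} extended to three variables, dividing by the appropriate power of $\epsilon$, and letting $\epsilon\to\infty$, I would invoke the operator limits \eqref{dx_Wilson_to_CH1}--\eqref{sy_Wilson_to_CH1}, one such relation per variable, each sending $\mathbf{D}_{x_i}\mapsto-\delta_{x_i}$ and $\mathbf{S}_{x_i}\mapsto\mathbf{S}_{x_i}$. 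The higher-degree Wilson coefficients then degenerate exactly as in the passage from \eqref{eqdiv-diff-biv-wil} to \eqref{eqdiv-diff-biv-CH}, producing the linear and quadratic expressions $f_1,\dots,f_{26}$ displayed above, while the eigenvalue becomes $(n+m+r)(n+m+r-1+a_1+2e_2+2e_3+a_4+b_1+b_4)$.

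An equivalent and more self-contained route, matching the constructive remark following the conjecture in section \ref{sec:conjracah}, is to fix the ansatz dictated by Conjecture \ref{prop:multiWilson}: a sixth-order equation supported on $\{0,1,2\}^3$ with $f_i$ a polynomial of degree $l_1+l_2+l_3$ that is independent of $x_j$ whenever $l_j=0$. Because each operator $E_{(l_1,l_2,l_3)}$ is graded, substituting the explicit low-degree members $H_{\pmb l}(x,y,z)$ for $\pmb l\in\{0,1,2\}^3$ in increasing order of $l_1+l_2+l_3$ isolates one new coefficient at a time, giving a triangular linear system that determines all $27$ coefficients uniquely; one then verifies that the resulting identity holds for general $(n,m,r)$ using that each univariate factor $h_j$ satisfies its own divided-difference equation together with the product rules for $\delta_{x_i}$ and $\mathbf{S}_{x_i}$.

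The main obstacle is computational rather than conceptual: both routes require inverting a $27\times 27$ operator-to-shift system (or solving the triangular substitution system) while carrying the cascaded parameter dependence of the trivariate family --- the arguments of the $y$- and $z$-factors depend on $n$ and on $n+m$ respectively --- through two successive limits without destroying the cancellations that keep the coefficients polynomial. This bookkeeping is best delegated to a computer algebra system, exactly as for the bivariate coefficients in Theorem \ref{eq:theorem1} and Proposition \ref{divided-difference_eq_CH}; once the symbolic reduction is performed, reading off $f_1,\dots,f_{26}$ is immediate.
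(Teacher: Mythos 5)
Your proposal is correct, and your ``more self-contained route'' in the third paragraph is precisely the paper's own proof: the authors determine the $27$ coefficients by substituting $H_{l_1,l_2,l_3}(x,y,z)$ for each $(l_1,l_2,l_3)\in\{0,1,2\}^3$ in increasing order of $l_1+l_2+l_3$, exploiting the triangularity of the resulting linear system exactly as you describe. Your primary route --- building the trivariate Racah equation from the Geronimo--Iliev operator by inverting the $27\times 27$ operator-to-shift system and then passing through the Racah-to-Wilson and Wilson-to-continuous-Hahn limits --- is a genuinely different and more laborious path that the paper does not take for $p=3$; it would have the advantage of simultaneously establishing the $p=3$ instances of the Racah and Wilson conjectures of sections \ref{sec:conjracah} and \ref{sec:conjwilson} rather than only the continuous Hahn case, at the cost of carrying the cascaded parameter dependence through two limits. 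One point in your favour over the paper: you correctly note that the substitution scheme only fixes the coefficients, and that one must still verify the identity for general $(n,m,r)$ (e.g.\ via the univariate equations for the factors $h_j$ and the product rules for $\delta_{x_i}$ and $\mathbf{S}_{x_i}$); the paper's one-line proof leaves this final verification implicit.
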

\begin{proof}
Starting from $i=1$ to $i=6$, the coefficient of $E_{(l_1,l_2,l_3)}$ with $l_1+l_2+l_3=i$ and $l_1,l_2,l_3\in\{0,1,2\}$ is obtained by replacing  $H_{l_1,l_2,l_3}(x,y,z)$ in the divided-difference equation.
\end{proof}

\subsubsection{Coefficients of the three-term recurrence relations satisfied by bivariate continuous Hahn polynomials and new family of monic bivariate continuous Hahn polynomials}

The column vectors of both families of bivariate continuous Hahn polynomials satisfy three-term recurrence relations of the form \eqref{RRTT}. As in the previous cases, the coefficients of the matrices can be obtained by using the same procedure as described for bivariate Racah polynomials, by using \eqref{eq:matricesgp} with $\lambda_{n}=n(a_{1}+2e_{2}+b_{3}+a_{3}+b_{1}+n-1)$. As a consequence, we can introduce new families of bivariate continuous dual Hahn polynomials by choosing appropriately the matrix $G_{n,n}$, where the bases $\textbf{x}^n={\mathbf{F}}_{n}=(x^{n-k}y^{k})_{k=0,\dots,n}$.

\begin{proposition}
The matrices ${\mathbf{S}}_{n}$ of size $(n+1)\times n$ and $\mathbf{T}_n$ of size $(n+1)\times (n-1)$ have the same structure as in \eqref{eq:sn} and \eqref{eq:tn} respectively, and their coefficients are given in terms of of the polynomial coefficients of the equation \eqref{eqdiv-diff-biv-CDH} as
\begin{align*}
s_{k,k}&=\frac{1}{2} i (n-k+1) \left(2 \left(a_1 \left(b_3+e_2\right)-b_1
   \left(a_3+e_2\right)\right) \right. \\
   &\left. +\left(a_1-a_3-b_1+b_3\right) (n-k)+2 (k-1) \left(a_1-b_1\right)\right), \quad k=1,\dots,n, \\
s_{k+1,k}&=\frac{1}{2} i k \left(a_3 \left(-2 b_1-2 e_2+k-2 n+1\right)+a_1 \left(2 b_3+k-1\right)+2 b_3 e_2 \right. \\
& \left. -b_1 k -b_3 k+2 b_3 n+b_1-b_3\right), \quad k=1,\dots,n,\\
t_{k,k}&=\frac{1}{12} (n-k) (n-k+1) \left(-2 (k-n+1) \left(a_1+a_3+b_1+b_3+2 e_2\right)+6 \left(b_1
   \left(a_3+e_2\right) \right. \right. \\
   & \left. \left. +a_1 \left(b_3+e_2\right)\right)+6 (k-1) \left(a_1+b_1\right)-(k-n+1) (3
   k+n-6)\right), \quad k=1,\dots,n-1, \\
t_{k+1,k}&=\frac{1}{2} k (n-k) \left(\left(a_3+b_3\right) (n-k-1)+(k-1) \left(a_1+b_1\right)+2 \left(a_3 b_1+a_1
   b_3\right) \right. \\
   & \left. +(k-1) (n-k-1)\right), \quad k=1,\dots,n-1,  \\
t_{k+2,k}&=\frac{1}{12} k (k+1) \left(2 (k-1) \left(a_1+a_3+b_1+b_3+2 e_2\right)+6 \left(b_3 \left(a_1+e_2\right)+a_3
   \left(b_1+e_2\right)\right) \right. \\
   & \left. -6 \left(a_3+b_3\right) (k-n+1)+(k-1) (4 n-3 k+-6)\right), \quad k=1,\dots,n-1.
\end{align*}
\end{proposition}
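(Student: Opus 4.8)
The plan is to run, for the continuous Hahn family, the same coefficient-matching argument that produced the matrices $\mathbf{S}_n$ and $\mathbf{T}_n$ in Section \ref{sec:3}, now with the operators $\mathbb{D}_x,\mathbb{S}_x$ replaced by $\delta_x,\mathbf{S}_x$ (and similarly in $y$) and with the expansion basis taken to be the monomial basis $\mathbf{F}_n=\mathbf{x}^n=(x^{n-k}y^{k})_{k=0,\dots,n}$, since the relevant lattices are now linear. First I would record how these operators act on the monomials,
\[
\delta_x x^{n}=nx^{n-1}-\tfrac14\binom{n}{3}x^{n-3}+\cdots,\qquad
\mathbf{S}_x x^{n}=x^{n}-\tfrac{n(n-1)}{8}x^{n-2}+\cdots,\qquad x\,x^{n}=x^{n+1},
\]
together with the identical identities in $y$. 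The decisive structural feature is that $\delta_x,\delta_y$ lower the total degree by one at leading order, $\mathbf{S}_x,\mathbf{S}_y$ preserve it, multiplication by $x,y$ raises it by one, and none of them moves the bidegree index $k$ of $x^{n-k}y^{k}$ by more than the order of the derivative applied; hence the left-hand side of \eqref{eqdiv-diff-biv-CH} acts on the graded basis by a banded, degree-triangular matrix.

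Next I would substitute the graded expansion $\mathbf{P}_n=G_{n,n}'\mathbf{F}_n+G_{n,n-1}'\mathbf{F}_{n-1}+G_{n,n-2}'\mathbf{F}_{n-2}+\cdots$ into \eqref{eqdiv-diff-biv-CH} and read off the equation degree by degree. At top degree only the degree-preserving pieces $f_4\mathbf{S}_x\delta_x\mathbf{S}_y\delta_y$, $f_5\delta_x^2$, $f_6\delta_y^2$, $f_7\mathbf{S}_x\delta_x$, $f_8\mathbf{S}_y\delta_y$ contribute, and their leading parts applied to $x^{n-k}y^{k}$ sum, using $(n-k)(n-k-1)+k(k-1)+2k(n-k)=n(n-1)$, to the scalar $-n(n-1+a_1+b_1+2e_2+b_3+a_3)$; this reproduces the eigenvalue $\lambda_n=n(a_1+2e_2+b_3+a_3+b_1+n-1)$ used in \eqref{eq:matricesgp}. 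Equating the coefficients of $\mathbf{F}_{n-1}$ and $\mathbf{F}_{n-2}$ then yields exactly the relations \eqref{eq:matricesgp}, in which $\mathbf{S}_n$ (respectively $\mathbf{T}_n$) records the part of the operator that lowers the total degree by one (respectively by two). Since these operators move the index $k$ by at most the order of the $y$-derivative applied, the degree-lowering-by-one part shifts $k$ by $0$ or $1$ and so has the two-diagonal shape \eqref{eq:sn} with entries $s_{k,k},s_{k+1,k}$, while the degree-lowering-by-two part shifts $k$ by $0$, $1$ or $2$, giving the three-diagonal shape \eqref{eq:tn} with entries $t_{k,k},t_{k+1,k},t_{k+2,k}$.

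The only real difficulty is the bookkeeping in the final extraction of the entries. Unlike the quadratic-lattice families, here the subleading expansions of $\delta_x$ and $\mathbf{S}_x$ drop the degree by $3$ and $2$ respectively, so in assembling $\mathbf{T}_n$ one must collect, for each $k$, the cross-contributions of these subleading pieces against the lower-order monomials of every coefficient $f_i$ of Proposition \ref{divided-difference_eq_CH}, in addition to the $G_{n,n-1}'\mathbf{S}_{n-1}$ correction already visible in \eqref{eq:matricesgp}. Each individual contribution is elementary, but there are many of them; I would therefore assemble the closed forms for $s_{k,k},s_{k+1,k},t_{k,k},t_{k+1,k},t_{k+2,k}$ and verify them with a computer algebra system, exactly as was done for the bivariate Racah, Wilson and continuous dual Hahn cases.
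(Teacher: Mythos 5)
Your proposal is correct and follows essentially the same route as the paper, which simply invokes the general procedure of Section \ref{sec:3} (substitute the graded expansion into \eqref{eqdiv-diff-biv-CH}, equate coefficients of $\mathbf{F}_{n-1}$ and $\mathbf{F}_{n-2}$ via \eqref{eq:matricesgp} with $\lambda_n=n(a_1+2e_2+b_3+a_3+b_1+n-1)$ and the monomial basis $\mathbf{F}_n=(x^{n-k}y^k)_k$) without writing out any further details. Your expansions of $\delta_x$ and $\mathbf{S}_x$ on monomials, the eigenvalue check via $(n-k)(n-k-1)+k(k-1)+2k(n-k)=n(n-1)$, and the band-structure argument for $\mathbf{S}_n$ and $\mathbf{T}_n$ are all accurate, and the final computer-assisted bookkeeping is exactly what the authors implicitly rely on.
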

Moreover, the matrices of leading coefficients of continuous Hahn polynomials defined in \eqref{eq:bchp1} $H_{n,m}(a_1,e_2,a_3;b_1,b_3;x,y)$ are given by
\begin{equation}\label{eq:gnnbr5}
G_{n,n}(a_{1},e_{2},a_{3};b_{1},b_{3})=
\begin{pmatrix}
g_{r,s}(n,a_{1},e_{2},a_{3};b_{1},b_{3})
\end{pmatrix}_{0 \leq r,s \leq n},
\end{equation}
where
\begin{align*}
g_{r,s}(n,a_{1},e_{2},a_{3};b_{1},b_{3})=\begin{cases}
\displaystyle{0}, & r<s, \\[1mm]
\displaystyle{(-1)^{r-s} \binom{n-r}{s-r}  (a-{1}+b_{1}+2 e_{2}-r+n-1)_{n-s}} \\
\quad \times \displaystyle{(a_{1}+b_{1}-s+n)_{s-r}\,(a_{1}+a_{3}+b_{1}+b_{3}+2 e_{2}-r+2 n-1)_{r}}, & r \geq s,
\end{cases}
\end{align*}
and the matrices of leading coefficients of bivariate continuous Hahn polynomials defined in \eqref{eq:bchp1} $\bar{H}_{n,m}(a_1,e_2,a_3;b_1,b_3;x,y)$ are given by
\begin{equation}\label{eq:gnnbr6}
\bar{G}_{n,n}(a_{1},e_{2},a_{3};b_{1},b_{3})=
\begin{pmatrix}
\bar{g}_{r,s}(n,a_{1},e_{2},a_{3};b_{1},b_{3})
\end{pmatrix}_{0 \leq r,s \leq n},
\end{equation}
where
\begin{align*}
\bar{g}_{r,s}(n,a_{1},e_{2},a_{3};b_{1},b_{3})=\begin{cases}
\displaystyle{0}, & r>s, \\[1mm]
\displaystyle{(-1)^{r-s} \binom{r}{s} (a_{3}+b_{3}+s)_{r-s} (a_{3}+b_{3}+2 e_{2}+i-1)_{s} } \\
\quad \times \displaystyle{ (a_{1}+a_{3}+b_{1}+b_{3}+2 e_{2}+r+n-1)_{n-r}}, & r \leq s.
\end{cases}
\end{align*}
As a consequence, both families of bivariate continuous dual Hahn polynomials can be generated from the three-term recurrence relations they satisfy by using Theorem \ref{eq:matricesTTRR}, where for these specific families we have $x_{1}(x)=x$ and $y_{2}(y)=y$. Finally, if we consider $G_{n,n}$ as the identity matrix we can introduce the family of monic bivariate continuous dual Hahn polynomials by using Corollary \ref{eq:matricesTTRRmonic}.

\section{Acknowledgments}
The first author is indebted to the AIMS-Cameroon 2014--2015 and 2015--2016  tutor fellowships. The third author acknowledges support from the AIMS-Cameroon 2014--2015 research grant and the hospitality and financial support during his visit to Universidade de Vigo in July 2015. This work has been partially supported for the fourth and fifth authors by the Ministerio de Econom\'{\i}a y Competi\-tividad of Spain under grant MTM2012--38794--C02--01, co-financed by the European Community fund FEDER. The last author thanks the hospitality of the African Institute for Mathematical Sciences (AIMS-Cameroon), where a significant part of this research was performed during his visits in November 2014, and May and June 2015.

\section*{References}

\end{document}